\newtheorem{theorem}{Theorem}[section]
\newtheorem{lemma}[theorem]{Lemma}
\newtheorem{proposition}[theorem]{Proposition}
\newcommand{\diag}{\mathrm{diag}}
\newcommand{\rank}{\mathrm{rank}}
\newcommand{\Kern}{\mathrm{Kern}}
\newcommand{\Range}{\mathcal{R}}
\newcommand{\E}{\mathbb{E}}
\newcommand{\R}{\ensuremath{\mathbb{R}}}
\newcommand{\C}{\ensuremath{\mathbb{C}}}
\newcommand{\ind}{\mathbf{1}}
\newcommand{\ip}[2]{\ensuremath{\langle #1, #2 \rangle}}
\newcommand{\twonorm}[1]{\lVert #1 \rVert_{2}}
\newcommand{\norm}[1]{\lVert #1 \rVert}
\newcommand{\abs}[1]{\ensuremath{| #1 |}}
\newcommand{\Tr}{\mathbf{Tr}}
\newcommand{\T}{\mathsf{T}}
\renewcommand{\Pr}{\mathbb{P}}
\newcommand{\eqrefstackrel}[2]{\overset{\mathclap{\eqref{#1}}}{#2}}
\newcommand{\bigkappa}{\mathlarger{\kappa}}
\definecolor{darkblue}{rgb}{0, 0.141, 0.758}
\definecolor{orange}{RGB}{213,114,0}
\title{Breaking Locality Accelerates Block Gauss-Seidel}
\author{
Stephen Tu \quad
Shivaram Venkataraman \quad
Ashia C. Wilson \\
Alex Gittens \quad
Michael I. Jordan \quad
Benjamin Recht
}
\begin{document}
\date{}

\maketitle


\begin{abstract}

Recent work by Nesterov and Stich~\cite{nesterov16} showed that momentum
can be used to accelerate the rate of convergence for block Gauss-Seidel in
the setting where a fixed partitioning of the coordinates is chosen ahead of
time.
We show that this setting is too restrictive, constructing instances where
breaking locality by running
non-accelerated Gauss-Seidel with randomly sampled coordinates
substantially outperforms accelerated Gauss-Seidel with any
fixed partitioning.  Motivated by this finding, we analyze the accelerated
block Gauss-Seidel algorithm in the random coordinate sampling setting. Our
analysis captures the benefit of acceleration with a new data-dependent
parameter which is well behaved when the matrix sub-blocks are
well-conditioned.
Empirically, we show that accelerated Gauss-Seidel with random coordinate
sampling provides speedups for large scale machine learning tasks when
compared to non-accelerated Gauss-Seidel and the classical conjugate-gradient
algorithm.

\end{abstract}

\section{Introduction}
\label{sec:intro}

The randomized Gauss-Seidel method is a commonly used iterative algorithm to
compute the solution of an $n \times n$ linear system $Ax = b$ by updating a
single coordinate at a time in a randomized order. While this approach is known
to converge linearly to the true solution when $A$ is
positive definite (see e.g. \cite{leventhal10}), in practice it is often more
efficient to update a small block of coordinates at a time due to the effects
of cache locality.

In extending randomized Gauss-Seidel to the block setting, a natural question
that arises is how one should sample the next block.  At one extreme a
\emph{fixed partition} of the coordinates is chosen ahead of time. The
algorithm is restricted to randomly selecting blocks from this fixed partitioning,
thus favoring data locality.  At the other extreme we break locality by
sampling a new set of \emph{random coordinates} to form a block at every
iteration.

Theoretically, the fixed partition case is well understood both
for Gauss-Seidel~\cite{qu15,gower15} and its Nesterov accelerated variant~\cite{nesterov16}.
More specifically, at most $O(\mu_{\mathrm{part}}^{-1} \log(1/\varepsilon))$
iterations of Gauss-Seidel are sufficient to reach a solution with at most
$\varepsilon$ error, where $\mu_{\mathrm{part}}$ is a quantity which measures
how well the $A$ matrix is preconditioned by the block diagonal matrix containing the sub-blocks corresponding to the fixed
partitioning.
When acceleration is used, Nesterov and Stich~\cite{nesterov16} show that the
rate improves to
$O\left(\sqrt{\frac{n}{p} \mu_{\mathrm{part}}^{-1}}\log(1/\varepsilon)\right)$, where
$p$ is the partition size.

For the random coordinate selection model, the existing literature is less complete.
While it is known~\cite{qu15,gower15} that the iteration complexity with
random coordinate section is $O(\mu_{\mathrm{rand}}^{-1}\log(1/\varepsilon))$
for an $\varepsilon$ error solution,
$\mu_{\mathrm{rand}}$ is another instance dependent
quantity which is not directly comparable to $\mu_{\mathrm{part}}$.
Hence it is not obvious how much better, if at all, one expects
random coordinate selection to perform compared to fixed partitioning.

Our first contribution in this paper is to show that,
when compared to the random coordinate selection model, the fixed partition model
can perform very poorly in terms of iteration complexity to reach a
pre-specified error.
Specifically, we present a family of instances (similar to the matrices
recently studied by Lee and Wright~\cite{lee16})
where \emph{non}-accelerated Gauss-Seidel
with random coordinate selection performs \emph{arbitrarily}
faster than
both non-accelerated and even accelerated Gauss-Seidel, using \emph{any} fixed
partition. Our result thus shows the importance of the sampling strategy and that
acceleration cannot make up for a poor choice of sampling distribution.

This finding motivates us to further study the benefits
of acceleration under the random coordinate selection model.
Interestingly, the benefits are more nuanced under this model.
We show that acceleration improves the rate from
$O(\mu_{\mathrm{rand}}^{-1} \log(1/\varepsilon))$ to
$O\left(\sqrt{\nu \mu_{\mathrm{rand}}^{-1}} \log(1/\varepsilon)\right)$, where $\nu$ is a new instance
dependent quantity that satisfies $\nu \leq \mu_{\mathrm{rand}}^{-1}$.
We derive a bound on $\nu$
which suggests that if the sub-blocks of $A$ are all well conditioned, then
acceleration can provide substantial speedups.
We note that this is merely a sufficient condition, and our experiments suggest that
our bound is conservative.

In the process of deriving our results, we also develop
a general proof framework for randomized accelerated methods based on Wilson et al.~\cite{wilson16}
which avoids the use
of estimate sequences in favor of an explicit Lyapunov function. Using our
proof framework we are able to recover recent results~\cite{nesterov16,allenzhu16} on
accelerated coordinate descent. Furthermore, our proof framework allows us to immediately transfer our results
on Gauss-Seidel over to the randomized accelerated Kaczmarz algorithm, extending
a recent result by Liu and Wright~\cite{liu16} on updating a single constraint at a time to the block case.

Finally, we empirically demonstrate that despite its theoretical nuances,
accelerated Gauss-Seidel using random coordinate selection can provide significant
speedups in practical applications over Gauss-Seidel with fixed partition sampling,
as well as the classical conjugate-gradient (CG) algorithm.
As an example, for a kernel ridge regression (KRR) task in machine learning on
the augmented CIFAR-10 dataset ($n=250,000$), acceleration with random coordinate sampling performs up
to $1.5 \times$ faster than acceleration with a fixed partitioning to reach an
error tolerance of $10^{-2}$, with the gap substantially widening for smaller
error tolerances.  Furthermore, it performs over $3.5\times$ faster than
conjugate-gradient on the same task.

\section{Background}
\label{sec:background}

%
We assume that we are given an $n \times n$ matrix $A$ which is positive
definite, and an $n$ dimensional response vector $b$. We also fix an integer
$p$ which denotes a block size.
%
Under the assumption of $A$ being positive definite, the function
$f(x) = \frac{1}{2} x^\T A x - x^\T b$
is strongly convex and smooth. Recent analysis of
Gauss-Seidel~\cite{gower15} proceeds by noting the connection between
Gauss-Seidel and (block) coordinate descent on $f$. This is the point of view
we will take in this paper.

\subsection{Existing rates for randomized block Gauss-Seidel}
\label{sec:background:existing_rates_gs}

We first describe the sketching framework of \cite{qu15,gower15} and show how
it yields rates on Gauss-Seidel when blocks are chosen via a fixed partition or
randomly at every iteration.  While we will only focus on the special case when
the sketch matrix represents column sampling, the sketching framework allows us
to provide a unified analysis of both cases.


To be more precise, let $\mathcal{D}$ be a distribution over $\R^{n \times p}$, and let $S_k \sim \mathcal{D}$ be drawn iid from $\mathcal{D}$.
If we perform block coordinate descent by minimizing $f$ along the range of $S_k$,
then the randomized block Gauss-Seidel update is given by
\begin{align}
    x_{k+1} = x_k - S_k(S_k^\T A S_k)^{\dag} S_k^\T (A x_k - b) \:. \label{eq:rand_gs}
\end{align}

\paragraph{Column sampling.}

Every index set $J \subseteq 2^{[n]}$ with $\abs{J}=p$ induces a sketching matrix
$S(J) = (e_{J(1)}, ..., e_{J(p)})$ where $e_i$ denotes the $i$-th standard basis vector
in $\R^{n}$, and $J(1), ..., J(p)$ is any ordering of the elements of $J$.
%
By equipping different probability measures on $2^{[n]}$, one can
easily describe fixed partition sampling as well as random coordinate sampling
(and many other sampling schemes).
The former puts uniform mass on the index sets $J_1, ..., J_{n/p}$,
whereas the latter puts uniform mass on all ${n \choose p}$ index sets of size $p$.
Furthermore, in the sketching framework there is no limitation to use a uniform distribution, nor is
there any limitation to use a fixed $p$ for every iteration.
For this paper, however, we will restrict our attention to these cases.

\paragraph{Existing rates.}

Under the assumptions stated above, \cite{qu15,gower15} show
that for every $k \geq 0$, the sequence \eqref{eq:rand_gs} satisfies
\begin{align}
    \E[\norm{ x_k - x_* }_{A}] &\leq (1 - \mu)^{k/2} \norm{ x_0 - x_* }_{A} \:,  \label{eq:rand_gs_rate}
\end{align}
where $\mu = \lambda_{\min}(\E[ P_{A^{1/2} S} ] )$.
The expectation in \eqref{eq:rand_gs_rate} is taken with respect to the randomness
of $S_0, S_1, ...$, and the expectation in the definition of $\mu$
is taken with respect to $S \sim \mathcal{D}$.
Under both fixed partitioning and random coordinate selection,
$\mu > 0$ is guaranteed (see e.g. \cite{gower15}, Lemma~4.3).
Thus, \eqref{eq:rand_gs} achieves a linear rate of convergence to the true solution,
with the rate governed by the $\mu$ quantity shown above.

We now specialize \eqref{eq:rand_gs_rate} to fixed partitioning and random
coordinate sampling, and provide some intuition for why we expect the latter
to outperform the former in terms of iteration complexity.
We first consider the case when the sampling distribution
corresponds to fixed partitioning. Assume for notational convenience that the
fixed partitioning corresponds to
placing the first $p$ coordinates in the first partition $J_1$,
the next $p$ coordinates in the second partition $J_2$, and so on.
Here, $\mu = \mu_{\mathrm{part}}$ corresponds to a measure of how
close the product of $A$ with
the inverse of the block diagonal is to the identity matrix, defined as
\begin{align}
    \mu_{\mathrm{part}} = \frac{p}{n} \lambda_{\min}\left(A \cdot \mathrm{blkdiag}\left( A_{J_1}^{-1}, ..., A_{J_{n/p}}^{-1} \right) \right) \:. \label{eq:mu_part}
\end{align}
Above, $A_{J_i}$ denotes the $p \times p$ matrix corresponding to the sub-matrix
of $A$ indexed by the $i$-th partition.
A loose lower bound on $\mu_{\mathrm{part}}$ is
\begin{align}
    \mu_{\mathrm{part}} \geq \frac{p}{n} \frac{\lambda_{\min}(A)}{\max_{1 \leq i \leq n/p} \lambda_{\max}(A_{J_i})} \:. \label{eq:mu_part_crude_lower_bound}
\end{align}
On the other hand, in the random coordinate case,
Qu et al.~\cite{qu15} derive a lower bound on $\mu = \mu_{\mathrm{rand}}$ as
\begin{align}
    \mu_{\mathrm{rand}} \geq \frac{p}{n} \left( \beta + (1-\beta) \frac{\max_{1 \leq i \leq n} A_{ii}}{\lambda_{\min}(A)} \right)^{-1} \:, \label{eq:mu_rand_lower_bound}
\end{align}
where \mbox{$\beta = (p-1)/(n-1)$}.
Using the lower bounds
\eqref{eq:mu_part_crude_lower_bound} and
\eqref{eq:mu_rand_lower_bound},
we can upper bound the iteration complexity of
fixed partition Gauss-Seidel $N_{\mathrm{part}}$
by $O\left( \frac{n}{p} \frac{\max_{1 \leq i \leq n/p} \lambda_{\max}(A_{J_i})}{\lambda_{\min}(A)} \log(1/\varepsilon) \right)$
and random coordinate Gauss-Seidel $N_{\mathrm{rand}}$ as
$O\left( \frac{n}{p} \frac{\max_{1 \leq i \leq n} A_{ii}}{\lambda_{\min}(A)}  \log(1/\varepsilon) \right)$.
Comparing the bound on $N_{\mathrm{part}}$ to the bound on $N_{\mathrm{rand}}$,
it is not unreasonable to expect that random
coordinate sampling has better iteration complexity than fixed partition
sampling in certain cases. In Section~\ref{sec:results}, we verify this by
constructing instances $A$ such that
fixed partition Gauss-Seidel takes arbitrarily more
iterations to reach a pre-specified error tolerance
compared with random coordinate Gauss-Seidel.

\subsection{Accelerated rates for fixed partition Gauss-Seidel}

%
Based on the interpretation of Gauss-Seidel as block coordinate descent on the
function $f$, we can use Theorem~1 of Nesterov and Stich~\cite{nesterov16} to
recover a procedure and a rate for accelerating \eqref{eq:rand_gs} in the fixed
partition case; the specific details are discussed in
Section~\ref{sec:proofs:relating_acdm} of
the appendix. We will refer to this procedure as ACDM.

The convergence guarantee of
the ACDM procedure
is that for all $k \geq 0$,
\begin{align}
    \E[\norm{x_k - x_*}_{A}] \leq O\left( \left(1-\sqrt{\frac{p}{n} \mu_{\mathrm{part}}}\right)^{k/2} \norm{x_0 - x_*}_{A} \right) \:. \label{eq:rand_acc_nesterov}
\end{align}
Above, $\mu_{\mathrm{part}}$ is the same quantity defined in
\eqref{eq:mu_part}.
Comparing \eqref{eq:rand_acc_nesterov} to the non-accelerated Gauss-Seidel rate given in
\eqref{eq:rand_gs_rate},
we see that acceleration improves the iteration complexity to reach a solution with $\varepsilon$ error
from
$O(\mu_{\mathrm{part}}^{-1} \log(1/\varepsilon))$ to
$O\left(\sqrt{\frac{n}{p} \mu_{\mathrm{part}}^{-1}} \log(1/\varepsilon)\right)$, as discussed in
Section~\ref{sec:intro}.

\section{Results}
\label{sec:results}

We now present the main results of the paper.
All proofs are deferred to the appendix.

\subsection{Fixed partition vs random coordinate sampling}
\label{sec:results:separation}

Our first result is to construct instances where
Gauss-Seidel with fixed partition sampling
runs arbitrarily slower than random coordinate sampling, even if acceleration
is used.

Consider the family of $n \times n$ positive definite matrices $\mathscr{A}$
given by
$\mathscr{A} = \{ A_{\alpha,\beta} : \alpha > 0, \alpha + \beta > 0 \}$ with
$A_{\alpha,\beta}$ defined as
$A_{\alpha,\beta} = \alpha I + \frac{\beta}{n} \ind_n\ind_n^\T$.
The family $\mathscr{A}$ exhibits a crucial property that $\Pi^\T A_{\alpha,\beta}
\Pi = A_{\alpha,\beta}$ for every $n \times n$ permutation matrix $\Pi$.
Lee and Wright~\cite{lee16} recently exploited this invariance to
illustrate the behavior of cyclic versus randomized permutations in coordinate
descent.

We explore the behavior of Gauss-Seidel as the matrices $A_{\alpha,\beta}$ become
ill-conditioned.  To do this, we consider a particular parameterization
which holds the minimum eigenvalue equal to one and sends the maximum eigenvalue
to infinity via the sub-family
$\{ A_{1,\beta} \}_{\beta > 0}$.
Our first proposition characterizes the behavior of Gauss-Seidel with fixed partitions on
this sub-family.
\begin{proposition}
\label{prop:simple_calcs_model2_fixed}
Fix $\beta > 0$ and positive integers $n,p,k$ such that $n = pk$.
Let $\{J_i\}_{i=1}^{k}$ be any partition of $\{1, ..., n\}$ with $\abs{J_i} = p$,
and denote $S_i \in \R^{n \times p}$ as the column selector for partition $J_i$.
Suppose $S \in \R^{n \times p}$ takes on value $S_i$ with probability $1/k$.
For every $A_{1,\beta} \in \mathscr{A}$ we have that
\begin{align}
    \mu_{\mathrm{part}} &= \frac{p}{n+\beta p} \label{eq:lambda_min_model2_fixed} \:.
\end{align}
\end{proposition}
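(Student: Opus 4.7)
The plan is to compute $\mu_{\mathrm{part}} = \frac{p}{n}\lambda_{\min}(AB)$ directly from the definition in \eqref{eq:mu_part}, where $B = \mathrm{blkdiag}(A_{J_1}^{-1}, \ldots, A_{J_k}^{-1})$. The first observation is that since $\Pi^\T A_{1,\beta} \Pi = A_{1,\beta}$ for every permutation matrix $\Pi$, the quantity $\mu_{\mathrm{part}}$ depends only on the block size $p$ and not on which coordinates make up each $J_i$. Therefore I may assume without loss of generality that $J_i = \{(i-1)p+1, \ldots, ip\}$.

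Under this canonical partition every principal sub-block satisfies $A_{J_i} = I_p + \frac{\beta}{n}\ind_p \ind_p^\T$, so Sherman--Morrison gives $A_{J_i}^{-1} = I_p - c\,\ind_p \ind_p^\T$ with $c := \frac{\beta}{n+\beta p}$. Introducing the block matrix $M := \mathrm{blkdiag}(\ind_p\ind_p^\T, \ldots, \ind_p\ind_p^\T) \in \R^{n\times n}$, this means $B = I_n - cM$. Next I would expand
\begin{align*}
AB = \Bigl(I_n + \tfrac{\beta}{n}\ind_n \ind_n^\T\Bigr)(I_n - cM) = I_n - cM + \tfrac{\beta}{n}\ind_n\ind_n^\T - \tfrac{c\beta}{n}\ind_n\ind_n^\T M.
\end{align*}
Using $\ind_n^\T M = p\,\ind_n^\T$ and the identity $\frac{\beta}{n}(1-cp) = c$, the cross terms collapse to yield $AB = I_n + c\bigl(\ind_n\ind_n^\T - M\bigr)$.

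The remaining task is to diagonalize $N := \ind_n\ind_n^\T - M$. I would split $\R^n$ into three invariant subspaces: (i) $\mathrm{span}(\ind_n)$, on which $M$ acts as $p$ and $\ind_n\ind_n^\T$ as $n$, giving eigenvalue $n-p$ (multiplicity $1$); (ii) the $(k-1)$-dimensional space of vectors that are constant within each block and orthogonal to $\ind_n$, on which $Mv = pv$ and $\ind_n\ind_n^\T v = 0$, giving eigenvalue $-p$; and (iii) the $(n-k)$-dimensional space of vectors that sum to zero within every block, which lie in the kernel of both $M$ and $\ind_n\ind_n^\T$, giving eigenvalue $0$. Adding $1$ and multiplying by $c$, the spectrum of $AB$ is $\{1+c(n-p),\,1-cp,\,1\}$, and since $c>0$ the minimum is $1 - cp = \frac{n}{n+\beta p}$.

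Finally, I combine this with $\mu_{\mathrm{part}} = \frac{p}{n}\lambda_{\min}(AB) = \frac{p}{n}\cdot\frac{n}{n+\beta p} = \frac{p}{n+\beta p}$. The only step requiring any care is the spectral decomposition of $N$; everything else is bookkeeping enabled by the permutation invariance of $A_{1,\beta}$, which makes the block structures of $A$, $M$, and $\ind_n\ind_n^\T$ compatible and allows the three eigenspaces above to be written down by inspection.
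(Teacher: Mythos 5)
Your proof is correct and follows essentially the same route as the paper's: both reduce to the canonical sequential partition via the permutation invariance of $A_{1,\beta}$, invert the blocks by Sherman--Morrison to get $B = I - cM$, and simplify $AB$ using $\ind_n^\T M = p\,\ind_n^\T$ to reach $I + c(\ind_n\ind_n^\T - M)$. Your explicit three-subspace diagonalization of $\ind_n\ind_n^\T - M$ fills in the final eigenvalue computation that the paper leaves implicit, and it is correct (in the nondegenerate case $k \geq 2$, which is clearly the intended setting).
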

Next, we perform a similar calculation under the
random column sampling model.
\begin{proposition}
\label{prop:simple_calcs_model2_random}
Fix $\beta > 0$ and integers $n,p$ such that $1 < p < n$.  Suppose
each column of $S \in \R^{n \times p}$
is chosen uniformly at random from $\{e_1, ..., e_n\}$ without
replacement.  For every $A_{1,\beta} \in \mathscr{A}$ we have that
\begin{align}
    \mu_{\mathrm{rand}} &= \frac{p}{n+\beta p} + \frac{(p-1)\beta p}{(n-1)(n+\beta p)}  \label{eq:lambda_min_model2_random} \:.
\end{align}
\end{proposition}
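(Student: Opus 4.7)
The plan is to exploit the permutation symmetry of $A_{1,\beta}$ together with the exchangeability of the sampling distribution to reduce $\E[P_{A^{1/2}S}]$ to a matrix with only two scalar parameters, then determine these by computing the trace and the $\ind_n$-quadratic form of the projector.

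First, observe that $\Pi A_{1,\beta}\Pi^\T = A_{1,\beta}$ for every $n\times n$ permutation matrix $\Pi$, so $\Pi$ commutes with $A_{1,\beta}^{1/2}$ as well. Since sampling $p$ columns uniformly without replacement is invariant under row permutations, $\Pi S \stackrel{d}{=} S$, and thus $P_{A^{1/2}\Pi S} = \Pi P_{A^{1/2}S}\Pi^\T$ has the same law as $P_{A^{1/2}S}$. Taking expectations, $\E[P_{A^{1/2}S}]$ commutes with every $\Pi$, hence lies in the two-dimensional commutant spanned by $I_n$ and $\ind_n\ind_n^\T$. Write $\E[P_{A^{1/2}S}] = aI_n + b\,\ind_n\ind_n^\T$; its eigenvalues are $a$ on $\ind_n^{\perp}$ (multiplicity $n-1$) and $a + bn$ on $\ind_n$. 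Once I verify $b \geq 0$, the minimum is $\mu_{\mathrm{rand}} = a$.

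Next, I would extract $a$ and $b$ from two identities. Since $A^{1/2}S$ is almost surely full column rank, the projector has rank $p$, giving $\Tr(\E[P_{A^{1/2}S}]) = an + bn = p$. For the quadratic form on $\ind_n$, the key structural observation is that both $S^\T A S$ and $S^\T A^{1/2}\ind_n$ are \emph{deterministic}. Indeed, $S^\T A S = S^\T S + (\beta/n)S^\T\ind_n\ind_n^\T S = I_p + (\beta/n)\ind_p\ind_p^\T$ for every column selector $S$, and by Sherman--Morrison its inverse is $I_p - \frac{\beta}{n+p\beta}\ind_p\ind_p^\T$. Moreover $A\ind_n = (1+\beta)\ind_n$ implies $S^\T A^{1/2}\ind_n = \sqrt{1+\beta}\,\ind_p$. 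Substituting into $P_{A^{1/2}S} = A^{1/2}S(S^\T A S)^{-1}S^\T A^{1/2}$ and contracting with $\ind_n$ yields the deterministic value $\ind_n^\T P_{A^{1/2}S}\ind_n = \frac{(1+\beta)pn}{n+p\beta}$, hence $an + bn^2 = \frac{(1+\beta)pn}{n+p\beta}$.

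Solving the resulting $2\times 2$ linear system is mechanical and produces $b = \frac{p\beta(n-p)}{n(n-1)(n+p\beta)} \geq 0$ together with
\[ a \;=\; \frac{p(n-1) + p(p-1)\beta}{(n-1)(n+p\beta)} \;=\; \frac{p}{n+\beta p} + \frac{(p-1)\beta p}{(n-1)(n+\beta p)}, \]
which is the claimed value of $\mu_{\mathrm{rand}}$. I do not expect a real obstacle here; the only genuine ``aha'' is noticing that for this $A_{\alpha,\beta}$ family both $S^\T A S$ and $S^\T A^{1/2}\ind_n$ are $S$-independent, after which the symmetry reduction, the Sherman--Morrison inversion, and the linear solve are all routine.
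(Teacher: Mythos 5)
Your proof is correct, and it takes a genuinely different route from the paper's. The paper argues by direct moment computation: it writes $S(S^\T A_{1,\beta}S)^{-1}S^\T$ explicitly via the matrix inversion lemma and then substitutes the combinatorial expectations $\E[SS^\T]=\tfrac{p}{n}I$ and $\E[S\ind_p\ind_p^\T S^\T]=\tfrac{p}{n}\bigl(1-\tfrac{p-1}{n-1}\bigr)I+\tfrac{p}{n}\tfrac{p-1}{n-1}\ind_n\ind_n^\T$ (which requires the pairwise inclusion probability $\tfrac{p(p-1)}{n(n-1)}$), obtaining the full matrix $\E[S(S^\T AS)^{-1}S^\T A]$ for general $\alpha$ and reading off its smallest eigenvalue. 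You instead use the permutation invariance of $A_{1,\beta}$ and of the sampling law to place $\E[P_{A^{1/2}S}]$ in the commutant $\Span\{I,\ind_n\ind_n^\T\}$, and then determine the two coefficients from two identities that hold \emph{deterministically} for every column selector, $\Tr\,P_{A^{1/2}S}=p$ and $\ind_n^\T P_{A^{1/2}S}\ind_n=\tfrac{(1+\beta)pn}{n+p\beta}$, so no inclusion-probability calculation is needed and the factor $\tfrac{p-1}{n-1}$ emerges from the linear solve. Both arguments rest on the same key observation that $S^\T A_{1,\beta}S$ does not depend on which columns are chosen. Your route is shorter and more conceptual for this particular statement; the paper's heavier computation pays off elsewhere, since the same moment formulas are reused for the second-moment quantity \eqref{eq:appendix:variance_term} and for the fixed-partition case, where a two-parameter symmetry reduction would not suffice. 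You also correctly note the one point that must not be skipped: concluding $\mu_{\mathrm{rand}}=a$ rather than $a+bn$ requires $b\ge 0$, which your explicit $b=\tfrac{p\beta(n-p)}{n(n-1)(n+p\beta)}$ confirms using $\beta>0$ and $p<n$.
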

The differences between \eqref{eq:lambda_min_model2_fixed} and
\eqref{eq:lambda_min_model2_random} are striking.  Let us assume that $\beta$ is
much larger than $n$.
Then, we have that $ \mu_{\mathrm{part}} \approx 1/\beta$ for
\eqref{eq:lambda_min_model2_fixed}, whereas $ \mu_{\mathrm{rand}} \approx
\frac{p-1}{n-1}$ for \eqref{eq:lambda_min_model2_random}.  That is,
$\mu_{\mathrm{part}}$ can be made arbitrarily smaller than $\mu_{\mathrm{rand}}$
as $\beta$ grows.

Our next proposition states that
the rate of Gauss-Seidel from \eqref{eq:rand_gs_rate} is tight order-wise
in that for any instance there always exists a starting point
which saturates the bound.
\begin{proposition}
\label{prop:lower_bound_gs}
Let $A$ be an $n \times n$ positive definite matrix, and let
$S$ be a random matrix such that $\mu = \lambda_{\min}(\E[P_{A^{1/2} S}]) > 0$.
Let $x_*$ denote the solution to $A x = b$.
There exists a starting point $x_0 \in \R^{n}$ such that
the sequence \eqref{eq:rand_gs} satisfies for all $k \geq 0$,
\begin{align}
    \E[\norm{x_k - x_*}_{A}] \geq (1-\mu)^{k} \norm{x_0 - x_*}_{A} \:. \label{eq:rand_gs_lb}
\end{align}
\end{proposition}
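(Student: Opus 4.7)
The plan is to pick $x_0$ so that the error $A^{1/2}(x_0 - x_*)$ is a bottom eigenvector of $M := \E[P_{A^{1/2}S}]$, and then exploit the fact that Jensen's inequality moves a norm outside an expectation in the favorable direction for a lower bound. Concretely, since $M$ is the expectation of orthogonal projectors, it is symmetric and satisfies $0 \preceq M \preceq I$, so there is a unit vector $v$ with $Mv = \mu v$. I would set $x_0 = x_* + A^{-1/2} v$, which gives $u_0 := A^{1/2}(x_0 - x_*) = v$ and $\|x_0 - x_*\|_A = 1$.

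Next, rewriting \eqref{eq:rand_gs} in terms of the error yields the deterministic identity $u_{k+1} = (I - P_{A^{1/2} S_k})\, u_k$ with $\|u_k\| = \|x_k - x_*\|_A$, as in the derivation of the standard upper bound \eqref{eq:rand_gs_rate}. Taking expectations and using that $S_k$ is independent of $u_k$ together with iterated conditional expectation, I get the clean recursion $\E[u_{k+1}] = (I - M)\,\E[u_k]$, and hence
\begin{equation*}
\E[u_k] = (I-M)^k u_0 = (1-\mu)^k v,
\end{equation*}
since $v$ is an eigenvector of $M$ with eigenvalue $\mu$.

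To finish, I apply Jensen's inequality to the convex function $\|\cdot\|$: for any random vector $X$, $\E[\|X\|] \geq \|\E[X]\|$. Applied to $X = u_k$, this gives
\begin{equation*}
\E[\|x_k - x_*\|_A] = \E[\|u_k\|] \geq \|\E[u_k]\| = (1-\mu)^k = (1-\mu)^k \|x_0 - x_*\|_A,
\end{equation*}
which is exactly \eqref{eq:rand_gs_lb}.

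The proof is almost entirely structural, with no real obstacle: the only place subtlety could enter is justifying that $\mu$ is attained by an honest eigenvector of $M$, but this is immediate from the symmetry and positive semidefiniteness of $M$, and $A^{-1/2}$ is well-defined by the positive definiteness of $A$. The elegance of the bound is that Jensen's inequality works here precisely because we target $\E[\|u_k\|]$ rather than $\E[\|u_k\|^2]$, so we never need to control higher moments of the error or maintain any invariance of $u_k$ under the recursion beyond linearity in expectation.
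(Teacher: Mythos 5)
Your proposal is correct and is essentially the paper's own argument: both pass to the error $u_k = A^{1/2}(x_k - x_*)$, derive the linear recursion $\E[u_{k+1}] = (I - \E[P_{A^{1/2}S}])\E[u_k]$, seed $u_0$ with a minimal eigenvector of $\E[P_{A^{1/2}S}]$, and close with Jensen's inequality $\E[\lVert u_k \rVert_2] \geq \lVert \E[u_k] \rVert_2$. The only difference is notational (you write the iteration matrix as $I - P_{A^{1/2}S_k}$ rather than $I - A^{1/2}H_kA^{1/2}$, which are identical).
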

From \eqref{eq:rand_gs_rate} we see that
Gauss-Seidel using random coordinates computes a
solution $x_k$ satisfying $\E[\norm{ x_k - x_* }_{A_{1,\beta}}] \leq \varepsilon$
in at most $k = O(\frac{n}{p}\log(1/\varepsilon))$ iterations.
On the other hand, Proposition~\ref{prop:lower_bound_gs}
states that for any fixed partition, there exists an input $x_0$ such that
$k = \Omega(\beta \log(1/\varepsilon))$ iterations are required to
reach the same $\varepsilon$ error tolerance.
Furthermore, the situation does not improve even if
use ACDM from \cite{nesterov16}.
Proposition~\ref{prop:lower_bound_acc_gs}, which we
describe later, implies that for any fixed partition there exists
an input $x_0$ such that
$k = \Omega\left(\sqrt{\frac{n}{p} \beta} \log(1/\varepsilon)\right)$ iterations
are required for ACDM
to reach $\varepsilon$ error. Hence as $\beta \longrightarrow \infty$, the gap
between random coordinate and fixed partitioning can be made arbitrarily
large.
These findings are numerically verified in Section~\ref{subsec:fixed_vs_random}.

\subsection{A Lyapunov analysis of accelerated Gauss-Seidel and Kaczmarz}
\label{sec:results:convergence}

Motivated by our findings,
our goal is to understand the behavior of accelerated Gauss-Seidel under random
coordinate sampling. In order to do this, we establish a general framework from which
the behavior of accelerated Gauss-Seidel with random coordinate sampling
follows immediately, along with rates for accelerated randomized Kaczmarz
\cite{liu16} and the accelerated coordinate descent methods of
\cite{nesterov16} and \cite{allenzhu16}.

For conciseness, we describe a simpler version of our framework which
is still able to capture both the Gauss-Seidel and Kaczmarz results,
deferring the general version to the full version of the paper.
Our general result requires a bit more notation, but follows the same line of
reasoning.

Let $H$ be a random $n \times n$ positive semi-definite matrix.  Put $G =
\E[H]$, and suppose that $G$ exists and is positive definite.  Furthermore, let
$f : \R^{n} \longrightarrow \R$ be strongly convex and
smooth, and let $\mu$ denote the strong convexity constant of $f$ w.r.t. the
$\norm{\cdot}_{G^{-1}}$ norm.

Consider the following sequence $\{(x_k, y_k, z_k)\}_{k\geq 0}$
defined by the recurrence
\begin{subequations}
    \label{eq:simple:acc}
    \begin{align}
        x_{k+1} &= \frac{1}{1+\tau} y_k + \frac{\tau}{1+\tau} z_k \:, \label{eq:simple:acc:coupling} \\
        y_{k+1} &= x_{k+1} - H_k \nabla f(x_{k+1}) \:, \label{eq:simple:acc:gradient_step} \\
        z_{k+1} &= z_k + \tau ( x_{k+1} - z_k ) - \frac{\tau}{\mu} H_k \nabla f(x_{k+1}) \:, \label{eq:simple:acc:zseq}
    \end{align}
\end{subequations}
where $H_0, H_1, ...$ are independent realizations of $H$
and $\tau$ is a parameter to be chosen.
Following \cite{wilson16}, we construct a candidate Lyapunov function $V_k$
for the sequence \eqref{eq:simple:acc} defined as
\begin{align}
  V_k = f(y_k) - f_* + \frac{\mu}{2} \norm{z_k - x_*}^2_{G^{-1}} \:.
\end{align}
The following theorem demonstrates that $V_k$ is indeed
a Lyapunov function for $(x_k, y_k, z_k)$.
\begin{theorem}
\label{thm:general_simplified}
Let $f, G, H$ be as defined above.
Suppose further that $f$ has 1-Lipschitz gradients w.r.t. the $\norm{\cdot}_{G^{-1}}$ norm,
and for every fixed $x \in \R^n$,
  \begin{align}
    f(\Phi(x;H)) \leq f(x) - \frac{1}{2} \norm{\nabla f(x)}^2_{H} \:, \label{eq:grad_progress_assumption}
  \end{align}
holds for a.e. $H$, where $\Phi(x; H) = x - H \nabla f(x)$.
Set $\tau$ in \eqref{eq:simple:acc} as $\tau = \sqrt{\mu/\nu}$, with
  \begin{align*}
    \nu = \lambda_{\max}\left( \E\left[ (G^{-1/2} H G^{-1/2} )^2 \right] \right) \:.
  \end{align*}
Then for every $k \geq 0$, we have
  \begin{align*}
    \E[ V_k ] \leq (1-\tau)^{k} V_0 \:.
  \end{align*}
\end{theorem}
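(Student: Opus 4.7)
The plan is to establish the one-step contraction $\E[V_{k+1} \mid H_0,\ldots,H_{k-1}] \leq (1-\tau) V_k$ and iterate. Condition on the history, so that $x_{k+1}, y_k, z_k$ are fixed and only $H_k$ is random. The gradient-progress assumption \eqref{eq:grad_progress_assumption} applied at $x = x_{k+1}$ with $H = H_k$ yields $f(y_{k+1}) - f_* \leq f(x_{k+1}) - f_* - \tfrac{1}{2}\norm{\nabla f(x_{k+1})}_{H_k}^2$ almost surely. For the distance term, I would write $z_{k+1} - x_* = u - (\tau/\mu) H_k \nabla f(x_{k+1})$ with $u := (1-\tau)(z_k - x_*) + \tau(x_{k+1} - x_*)$, expand $\norm{\cdot}_{G^{-1}}^2$, and take conditional expectation: the cross term collapses to $-(2\tau/\mu)\ip{u}{\nabla f(x_{k+1})}$ via $\E[H_k] = G$, while the stochastic quadratic term becomes $(\tau^2/\mu^2)\,\nabla f(x_{k+1})^\T \E[H_k G^{-1} H_k] \nabla f(x_{k+1})$.

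The key matrix inequality is $\E[H G^{-1} H] \preceq \nu G$, obtained by conjugating $\E[(G^{-1/2} H G^{-1/2})^2] \preceq \nu I$ by $G^{1/2}$ on both sides; it upper-bounds the stochastic quadratic by $(\tau^2 \nu/\mu^2)\norm{\nabla f(x_{k+1})}_G^2 = (\tau^2\nu/\mu^2)\E[\norm{\nabla f(x_{k+1})}_{H_k}^2]$. The choice $\tau = \sqrt{\mu/\nu}$ is calibrated so that, after multiplying by the $\mu/2$ weight on $\norm{z_{k+1} - x_*}_{G^{-1}}^2$, this contribution becomes exactly $\tfrac{1}{2}\E[\norm{\nabla f(x_{k+1})}_{H_k}^2]$ and cancels the gain from gradient progress, leaving no residual stochastic terms.

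After this cancellation, what remains is the deterministic inequality $f(x_{k+1}) - f_* + \tfrac{\mu}{2}\norm{u}_{G^{-1}}^2 - \tau\ip{u}{\nabla f(x_{k+1})} \leq (1-\tau) V_k$. Convexity of $\norm{\cdot}_{G^{-1}}^2$ applied to the convex combination defining $u$ produces $\tfrac{(1-\tau)\mu}{2}\norm{z_k - x_*}_{G^{-1}}^2$ (which matches the right-hand side) plus $\tfrac{\tau\mu}{2}\norm{x_{k+1} - x_*}_{G^{-1}}^2$, and $\mu$-strong convexity of $f$ at $x_*$ bounds the latter by $\tau[f_* - f(x_{k+1}) + \ip{\nabla f(x_{k+1})}{x_{k+1} - x_*}]$. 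Expanding $u$ inside $-\tau\ip{u}{\nabla f(x_{k+1})}$ and collecting terms, the residual reduces after dividing by $(1-\tau)$ to $f(x_{k+1}) + \tau\ip{\nabla f(x_{k+1})}{x_{k+1} - z_k} \leq f(y_k)$; the coupling \eqref{eq:simple:acc:coupling} yields $\tau(x_{k+1} - z_k) = y_k - x_{k+1}$ by direct algebra, so this is just plain convexity of $f$ between $y_k$ and $x_{k+1}$. The main obstacle will be the matrix-variance step that produces $\E[H G^{-1} H] \preceq \nu G$: the entire proof is calibrated around this bound, since the Lyapunov weight $\mu$ on the $z$-distance and the step-size $\tau = \sqrt{\mu/\nu}$ are both dictated by the requirement that the stochastic noise from $H_k \nabla f(x_{k+1})$ exactly offsets the gradient-progress gain. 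Everything downstream is the standard Nesterov coupling cancellation.
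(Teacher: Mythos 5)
Your proposal is correct, and the stochastic core is the same as the paper's: condition on the past, use \eqref{eq:grad_progress_assumption}, pass the expectation through $\E[H_k]=G$, and bound the quadratic noise term via $\E[(G^{-1/2}HG^{-1/2})^2]\preccurlyeq \nu I \Longleftrightarrow \E[HG^{-1}H]\preccurlyeq \nu G$, with $\tau=\sqrt{\mu/\nu}$ calibrated so that this term exactly cancels the $\tfrac12\norm{\nabla f(x_{k+1})}_G^2$ gain. Where you diverge is the deterministic bookkeeping. The paper estimates $V_{k+1}-V_k$ using the two-point identity, applies $\mu$-strong convexity between $x_{k+1}$ and $y_k$, and then must invoke the $L$-Lipschitz gradient bound to convert $-\tau\ip{y_k-x_{k+1}}{\nabla f(x_{k+1})}$ into $\tau(f(x_{k+1})-f(y_k))+\tfrac{\tau L}{2}\norm{y_k-x_{k+1}}^2_{G^{-1}}$, killing the residual $\bigl(\tfrac{\tau L}{2}-\tfrac{\mu}{2\tau}\bigr)\norm{y_k-x_{k+1}}^2_{G^{-1}}$ only because $\tau\le\sqrt{\mu/L}$ (hence the Lipschitz hypothesis and, in the general Theorem~\ref{thm:main_structural}, a second step-size condition). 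You instead compare $\E_k[V_{k+1}]$ directly with $(1-\tau)V_k$: Jensen on $\norm{u}^2_{G^{-1}}$ for the convex combination $u$, strong convexity only at $x_*$, and the coupling $\tau(x_{k+1}-z_k)=y_k-x_{k+1}$ reduce everything to the plain convexity inequality $f(x_{k+1})+\ip{\nabla f(x_{k+1})}{y_k-x_{k+1}}\le f(y_k)$. This buys a slightly cleaner proof that never uses the 1-Lipschitz hypothesis in the estimate itself and needs no second constraint on $\tau$; the paper's route, in exchange, is the one that generalizes verbatim to the weighted updates of Theorem~\ref{thm:main_structural} with $(\Gamma_k,\gamma_k)$ and a separate $L$.

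Two small points you should make explicit: you need $\tau\in(0,1)$ both for the convex combination defining $u$ and for the division by $1-\tau$; this holds since $\mu\le L=1$ and $\nu\ge 1$ (e.g.\ $\E[(G^{-1/2}HG^{-1/2})^2]\succcurlyeq(\E[G^{-1/2}HG^{-1/2}])^2=I$), so $\tau=\sqrt{\mu/\nu}\le 1$, with the degenerate case $\tau=1$ handled trivially. Also note that the cross-term collapse requires $u$ and $\nabla f(x_{k+1})$ to be measurable with respect to the past and $H_k$ independent of it, which is exactly the filtration setup the paper uses.
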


We now proceed to specialize Theorem~\ref{thm:general_simplified} to both
the Gauss-Seidel and Kaczmarz settings.

\subsubsection{Accelerated Gauss-Seidel}
Let $S \in \R^{n \times p}$ denote a random sketching matrix.
As suggested in Section~\ref{sec:background}, we set
$f(x) = \frac{1}{2} x^\T A x - x^\T b$ and put $H = S(S^\T A S)^{\dag} S^\T$.
Note that $G = \E[ S(S^\T A S)^{\dag} S^\T ]$ is positive definite
iff $\lambda_{\min}(\E[ P_{A^{1/2} S}]) > 0$,
and is hence satisfied for both fixed partition and random coordinate sampling
(c.f. Section~\ref{sec:background}).
Next, the fact that $f$ is 1-Lipschitz w.r.t. the $\norm{\cdot}_{G^{-1}}$ norm
and the condition \eqref{eq:grad_progress_assumption} are standard calculations.
All the hypotheses of Theorem~\ref{thm:general_simplified} are thus satisfied, and
the conclusion
is Theorem~\ref{thm:rand_acc_gs}, which characterizes the
rate of convergence for accelerated Gauss-Seidel
(Algorithm~\ref{alg:rand_acc_gs}).

\begin{algorithm}[h!]
\caption{Accelerated randomized block Gauss-Seidel.}
\label{alg:rand_acc_gs}
\begin{algorithmic}[1]
  \REQUIRE{$A \in \R^{n \times n}$, $A \succ 0$, $b \in \R^{n}$, sketching matrices $\{S_k\}_{k=0}^{T-1} \subseteq \R^{n \times p}$, $x_0 \in \R^{n}$, $\mu \in (0,1)$, $\nu \geq 1$.}
  \STATE Set $\tau = \sqrt{\mu/\nu}$.
  \STATE Set $y_0 = z_0 = x_0$.
\FOR{$k=0, ..., T-1$}
    \STATE $x_{k+1} = \frac{1}{1+\tau} y_k + \frac{\tau}{1+\tau} z_k$.
    \STATE $H_k = S_k(S_k^\T A S_k)^{\dag} S_k^\T$.
    \STATE $y_{k+1} = x_{k+1} -  H_k (A x_{k+1} - b)$. \label{alg:line:localupdate}
    \STATE $z_{k+1} = z_k + \tau(x_{k+1} - z_k) - \frac{\tau}{\mu} H_k (A x_{k+1} - b)$.
\ENDFOR
\STATE Return $y_T$.
\end{algorithmic}
\end{algorithm}

\begin{theorem}
\label{thm:rand_acc_gs}
Let $A$ be an $n \times n$ positive definite matrix and $b \in \R^{n}$.
Let $x_* \in \R^{n}$ denote the unique vector satisfying $A x_* = b$.
Suppose each $S_k$, $k =0, 1, 2, ...$ is an independent copy of a random matrix $S \in \R^{n \times p}$.
Put $\mu = \lambda_{\min}( \E[P_{A^{1/2} S}] )$, and suppose the distribution of $S$
satisfies $\mu > 0$.
Invoke Algorithm~\ref{alg:rand_acc_gs} with $\mu$ and $\nu$, where
\begin{align}
    \nu = \lambda_{\max}\left( \E\left[ (G^{-1/2} H G^{-1/2})^2 \right] \right) \:, \label{eq:gs_variance_term}
\end{align}
with $H = S(S^\T A S)^{\dag}S^\T$ and $G = \E[H]$.
Then with $\tau = \sqrt{\mu/\nu}$, for all $k \geq 0$,
\begin{align}
    \E[\norm{ y_k - x_* }_A] \leq \sqrt{2} (1 - \tau)^{k/2} \norm{ x_0 - x_* }_A \:. \label{eq:rand_acc_gs_rate}
\end{align}
\end{theorem}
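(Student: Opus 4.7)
The plan is to deduce Theorem~\ref{thm:rand_acc_gs} as a direct specialization of Theorem~\ref{thm:general_simplified} to the choices $f(x) = \tfrac{1}{2}x^{\T} A x - x^{\T} b$, $H = S(S^{\T} A S)^{\dag} S^{\T}$, and $G = \E[H]$. The work then splits into three stages: verifying the hypotheses of the abstract theorem, instantiating its conclusion, and converting the resulting Lyapunov bound into the advertised $\|\cdot\|_A$ estimate on $y_k - x_*$.

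First I would verify the four hypotheses in turn. Positive definiteness of $G$ follows from the identity $A^{1/2} G A^{1/2} = \E[P_{A^{1/2} S}]$, so the assumption $\mu = \lambda_{\min}(\E[P_{A^{1/2} S}]) > 0$ yields $G \succ 0$ and simultaneously identifies $\mu$ as the smallest eigenvalue of $G^{1/2} A G^{1/2}$, i.e.\ as the strong-convexity constant of $f$ with respect to $\|\cdot\|_{G^{-1}}$. For the 1-Lipschitz gradient condition, the dual-norm reformulation $A G A \preceq G^{-1}$ reduces to $\lambda_{\max}(G^{1/2} A G^{1/2}) \leq 1$, which holds because $A^{1/2} G A^{1/2}$ is an average of orthogonal projectors. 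Finally, \eqref{eq:grad_progress_assumption} follows from a direct Taylor expansion and the pseudo-inverse identity $HAH = H$; in fact the inequality holds with equality, which should be remarked on.

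Next, applying Theorem~\ref{thm:general_simplified} with $\nu$ as in \eqref{eq:gs_variance_term} yields $\E[V_k] \leq (1-\tau)^{k} V_0$ with $\tau = \sqrt{\mu/\nu}$, where
\begin{align*}
    V_k = f(y_k) - f_* + \frac{\mu}{2}\|z_k - x_*\|_{G^{-1}}^2 \:.
\end{align*}
Since $f(x) - f_* = \tfrac{1}{2}\|x - x_*\|_A^2$, the lower bound $V_k \geq \tfrac{1}{2}\|y_k - x_*\|_A^2$ is immediate, and for the initial value I would use $z_0 = y_0 = x_0$ together with the inequality $\mu G^{-1} \preceq A$ (which follows from $\mu = \lambda_{\min}(A^{1/2} G A^{1/2})$) to bound $V_0 \leq \tfrac{1}{2}\|x_0 - x_*\|_A^2 + \tfrac{1}{2}\|x_0 - x_*\|_A^2 = \|x_0 - x_*\|_A^2$.

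Putting the pieces together gives $\E[\|y_k - x_*\|_A^2] \leq 2(1-\tau)^{k} \|x_0 - x_*\|_A^2$, after which a single application of Jensen's inequality on the concave map $t \mapsto \sqrt{t}$ yields \eqref{eq:rand_acc_gs_rate} with the stated $\sqrt{2}$ prefactor. The only nontrivial step is really the verification of the hypotheses of Theorem~\ref{thm:general_simplified}; I expect the Lipschitz-gradient check to be the main obstacle, since it relies on recognising $A^{1/2} G A^{1/2}$ as a convex combination of projections and hence bounded above by $I$. Everything else is mechanical matrix manipulation with pseudo-inverses.
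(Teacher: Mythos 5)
Your proposal is correct and follows essentially the same route as the paper: specialize the abstract Lyapunov theorem with $\Gamma=\gamma=1$, verify strong convexity via $A^{1/2}GA^{1/2}=\E[P_{A^{1/2}S}]$, get $L=1$ from the fact that an expectation of orthogonal projectors has eigenvalues at most one, check \eqref{eq:grad_progress_assumption} with equality using $HAH=H$, and then convert $\E[V_k]\le(1-\tau)^kV_0$ into the $\sqrt{2}$-prefactor bound via $\mu G^{-1}\preccurlyeq A$ and Jensen. The only cosmetic difference is that you phrase the smoothness check through the dual condition $AGA\preccurlyeq G^{-1}$ rather than $A\preccurlyeq G^{-1}$ directly, but both reduce to the same eigenvalue bound.
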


Note that in the setting of Theorem~\ref{thm:rand_acc_gs}, by
the definition of $\nu$ and $\mu$,
it is always the case that $\nu \leq 1/\mu$. Therefore, the
iteration complexity of acceleration is at least as good as the iteration
complexity without acceleration.

We conclude our discussion of Gauss-Seidel
by describing the analogue of Proposition~\ref{prop:lower_bound_gs}
for Algorithm~\ref{alg:rand_acc_gs}, which shows that
our analysis in Theorem~\ref{thm:rand_acc_gs} is tight order-wise.
The following proposition applies to
ACDM as well; we show in the full version of the paper how ACDM
can be viewed as a special case of Algorithm~\ref{alg:rand_acc_gs}.
\begin{proposition}
\label{prop:lower_bound_acc_gs}
Under the setting of Theorem~\ref{thm:rand_acc_gs}, there exists
starting positions $y_0, z_0 \in \R^{n}$ such that
the iterates $\{(y_k, z_k)\}_{k \geq 0}$ produced by Algorithm~\ref{alg:rand_acc_gs} satisfy
    \begin{align*}
        \E[\norm{y_k - x_*}_{A} + \norm{z_k - x_*}_{A}] \geq (1-\tau)^k \norm{y_0 - x_*}_{A} \:.
    \end{align*}
\end{proposition}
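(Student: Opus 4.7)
The plan is to exhibit a worst-case initialization for which the expectations of $y_k$ and $z_k$ decay no faster than $(1-\tau)^k$ along a single direction, and then to invoke Jensen's inequality. The right direction is the eigenvector of $GA$ corresponding to the smallest eigenvalue of $A^{1/2} G A^{1/2}$, which by definition equals $\mu$.

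First I would shift coordinates by writing $e_k = y_k - x_*$, $d_k = z_k - x_*$, and $u_k = x_k - x_*$, and use $A x_* = b$ to rewrite Algorithm~\ref{alg:rand_acc_gs} as the homogeneous linear recursion
\begin{align*}
u_{k+1} &= \tfrac{1}{1+\tau}\, e_k + \tfrac{\tau}{1+\tau}\, d_k, \\
e_{k+1} &= (I - H_k A)\, u_{k+1}, \\
d_{k+1} &= (1-\tau)\, d_k + \tau\, u_{k+1} - \tfrac{\tau}{\mu}\, H_k A\, u_{k+1}.
\end{align*}
Because $H_k$ is independent of the filtration generated by $H_0, \ldots, H_{k-1}$ while $u_{k+1}$ is measurable with respect to that filtration, conditioning gives $\E[H_k A u_{k+1}] = G A\, \E[u_{k+1}]$. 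Taking expectations throughout therefore produces a purely deterministic recursion for $(\bar e_k, \bar d_k) := (\E[e_k], \E[d_k])$ driven by the matrix $GA$.

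Next I would choose the initial point. Let $w$ be a unit eigenvector of the symmetric positive definite matrix $A^{1/2} G A^{1/2}$ corresponding to its smallest eigenvalue $\mu$, and set $v = A^{-1/2} w$. A similarity argument gives $GA v = \mu v$, and one checks that $\|v\|_A^2 = w^\T w = 1$. Initialize with $y_0 = z_0 = x_* + v$, so that $\bar e_0 = \bar d_0 = v$. An easy induction shows that $\bar e_k$ and $\bar d_k$ remain parallel to $v$: if $\bar u_{k+1} = \gamma_{k+1} v$, then $\bar e_{k+1} = (1-\mu)\gamma_{k+1}\, v$, while in the expectation of the $d$-recursion the two cross terms $\tau \bar u_{k+1}$ and $(\tau/\mu) G A \bar u_{k+1} = \tau \gamma_{k+1} v$ cancel exactly, leaving $\bar d_{k+1} = (1-\tau) \bar d_k$. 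Hence $\bar d_k = (1-\tau)^k v$.

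Finally, applying Jensen's inequality to the convex function $\|\cdot\|_A$ yields
\begin{align*}
\E[\|y_k - x_*\|_A + \|z_k - x_*\|_A] \geq \|\bar e_k\|_A + \|\bar d_k\|_A \geq \|\bar d_k\|_A = (1-\tau)^k \|v\|_A = (1-\tau)^k \|y_0 - x_*\|_A,
\end{align*}
which is the desired bound. The only delicate point is to verify the exact cancellation in the $d$-recursion along the $v$-direction, which crucially relies on the choice $v$ being an eigenvector of $GA$ with eigenvalue precisely $\mu$; everything else is routine linear algebra, so I do not foresee any substantive obstacle.
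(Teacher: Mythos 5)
Your proof is correct, and it takes a more direct route than the paper's. The paper writes the stacked error $e_k = (y_k - x_*, z_k - x_*)$, conjugates by $P^{1/2} = \mathrm{blkdiag}(A^{1/2}, \mu^{1/2}G^{-1/2})$, unrolls the expected recursion into a block matrix $R$ built from $Q = A^{1/2}G^{1/2}$, uses the SVD of $Q$ together with a $2\times 2$ block-eigenvalue lemma to show $1-\tau$ is an eigenvalue of $R$ (coming from the singular value $\sigma_i=\sqrt{\mu}$, which makes the corresponding $2\times 2$ block upper triangular), and then lower bounds via the spectral radius after choosing $P^{1/2}e_0$ as a top right singular vector of $R^k$. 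You exploit exactly the same mechanism --- the eigendirection of $GA$ with eigenvalue $\mu$, along which the $\tau \bar u_{k+1}$ and $\tfrac{\tau}{\mu}GA\bar u_{k+1}$ terms cancel --- but you do so directly: picking $v = A^{-1/2}w$ with $A^{1/2}GA^{1/2}w = \mu w$ collapses the expected dynamics to a scalar recursion $\bar d_{k+1} = (1-\tau)\bar d_k$, and Jensen's inequality finishes (your cancellation check, $GAv=\mu v$, $\|v\|_A = 1$, and the conditional-independence step are all sound). What your version buys: it avoids the SVD/block-determinant machinery entirely, your starting point $y_0=z_0=x_*+v$ is independent of $k$ (the paper's choice of $P^{1/2}e_0$ as a singular vector of $R^k$ nominally varies with $k$), and it respects the algorithm's own initialization $y_0=z_0=x_0$. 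What the paper's heavier computation buys is an explicit description of the full expected propagator $R^k$ in all eigendirections, not just the lower bound along the $\mu$-direction.
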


\subsubsection{Accelerated Kaczmarz}

The argument for Theorem~\ref{thm:rand_acc_gs} can be slightly modified
to yield a result for randomized accelerated Kaczmarz in the sketching
framework, for the case of a consistent overdetermined linear system.

Specifically, suppose we are given an $m \times n$ matrix $A$ which has
full column rank, and $b \in \mathcal{R}(A)$.
Our goal is to recover the unique $x_*$ satisfying $A x_* = b$.
To do this, we apply a similar line of reasoning as
\cite{lee13}.
We set $f(x) = \frac{1}{2}\norm{x - x_*}^2_2$ and
$H = P_{A^\T S}$, where $S$ again is our random sketching matrix.
At first, it appears our choice of $f$ is problematic since we do not have access to
$f$ and $\nabla f$, but a quick calculation shows that
$H \nabla f(x) = (S^\T A)^{\dag} S^\T (A x - b)$.
Hence, with $r_k = A x_k - b$, the sequence \eqref{eq:simple:acc} simplifies to
\begin{subequations}
  \label{eq:kaczmarz}
    \begin{align}
        x_{k+1} &= \frac{1}{1+\tau} y_k + \frac{\tau}{1+\tau} z_k \:, \\
        y_{k+1} &= x_{k+1} - (S_k^\T A)^{\dag} S_k^\T r_{k+1} \:, \\
        z_{k+1} &= z_k + \tau ( x_{k+1} - z_k ) - \frac{\tau}{\mu} (S_k^\T A)^{\dag} S_k^\T r_{k+1} \:.
    \end{align}
\end{subequations}
The remainder of the argument
proceeds nearly identically, and leads to the following theorem.
\begin{theorem}
\label{thm:rand_acc_rk_simple}
Let $A$ be an $m \times n$ matrix with full column rank, and $b = A x_*$.
Suppose each $S_k$, $k =0, 1, 2, ...$ is an independent copy of a random sketching matrix $S \in \R^{m \times p}$.
Put $H = P_{A^\T S}$ and $G = \E[H]$.
The sequence \eqref{eq:kaczmarz}
with $\mu = \lambda_{\min}( \E[P_{A^\T S}] )$,
$\nu = \lambda_{\max}\left( \E\left[ (G^{-1/2} H G^{-1/2})^2 \right] \right)$,
and $\tau = \sqrt{\mu/\nu}$ satisfies for all $k \geq 0$,
\begin{align}
    \E[\twonorm{ y_k - x_* }] \leq \sqrt{2} (1 - \tau)^{k/2} \twonorm{ x_0 - x_* } \:. \label{eq:rand_acc_rk_rate}
\end{align}
\end{theorem}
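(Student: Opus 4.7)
The plan is to derive Theorem~\ref{thm:rand_acc_rk_simple} as a direct corollary of Theorem~\ref{thm:general_simplified} by making the substitutions $f(x) = \frac{1}{2}\twonorm{x - x_*}^2$ and $H = P_{A^\T S}$, exactly as suggested in the paragraph preceding the theorem. The first step is to verify that the iteration \eqref{eq:kaczmarz} is literally the specialization of \eqref{eq:simple:acc} to this $f$ and $H$. This reduces to the identity $H \nabla f(x) = P_{A^\T S}(x - x_*) = (S^\T A)^\dag (S^\T A)(x - x_*) = (S^\T A)^\dag S^\T (Ax - b)$, which follows from $B^\dag B = P_{\Range(B^\T)}$ applied to $B = S^\T A$ and from $b = A x_*$.

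Next I would verify each hypothesis of Theorem~\ref{thm:general_simplified}. Since $A$ has full column rank, $\Range(A^\T S)$ spans $\R^n$ in expectation precisely when $\mu = \lambda_{\min}(\E[P_{A^\T S}]) > 0$, which is assumed; hence $G = \E[H]$ is positive definite. For strong convexity, one computes
\begin{align*}
f(y) - f(x) - \ip{\nabla f(x)}{y-x} = \tfrac{1}{2}\twonorm{y-x}^2 \geq \tfrac{\mu}{2}\norm{y-x}^2_{G^{-1}},
\end{align*}
which holds because $G \succeq \mu I$ and therefore $I \succeq \mu G^{-1}$; this identifies the strong convexity constant as $\mu = \lambda_{\min}(G)$ as desired. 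The 1-Lipschitz gradient condition w.r.t.\ $\norm{\cdot}_{G^{-1}}$ is the reverse inequality $I \preceq G^{-1}$, which holds because $G = \E[P_{A^\T S}] \preceq I$ as a convex combination of orthogonal projections. Finally, the per-step progress \eqref{eq:grad_progress_assumption} is verified by direct expansion: since $I - P_{A^\T S}$ is an orthogonal projection,
\begin{align*}
f(\Phi(x;H)) = \tfrac{1}{2}\twonorm{(I - P_{A^\T S})(x-x_*)}^2 = f(x) - \tfrac{1}{2}\twonorm{P_{A^\T S}(x-x_*)}^2 = f(x) - \tfrac{1}{2}\norm{\nabla f(x)}^2_H,
\end{align*}
in fact with equality.

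Having checked all hypotheses, Theorem~\ref{thm:general_simplified} yields $\E[V_k] \leq (1-\tau)^k V_0$ for $\tau = \sqrt{\mu/\nu}$ with the stated $\nu$. The last step is to translate this Lyapunov bound into the claimed guarantee on $\E[\twonorm{y_k - x_*}]$. Using $f(y_k) - f_* = \tfrac{1}{2}\twonorm{y_k - x_*}^2$ gives $\tfrac{1}{2}\E[\twonorm{y_k-x_*}^2] \leq \E[V_k]$. Since $y_0 = z_0 = x_0$ and $\mu G^{-1} \preceq I$, the initial Lyapunov value is bounded by $V_0 \leq \twonorm{x_0 - x_*}^2$. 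Combining these two bounds and applying Jensen's inequality to pass from $\E[\twonorm{\cdot}^2]$ to $\E[\twonorm{\cdot}]$ yields the factor $\sqrt{2}$ and the rate \eqref{eq:rand_acc_rk_rate}.

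There is no real obstacle in this proof; the only place that requires a moment's thought is confirming that the choice $f(x) = \tfrac{1}{2}\twonorm{x-x_*}^2$, which depends on the unknown solution $x_*$, is nevertheless implementable because $H\nabla f(x)$ and therefore the updates in \eqref{eq:kaczmarz} depend only on the residual $Ax - b$. Everything else is a mechanical verification of the hypotheses of the general Lyapunov theorem.
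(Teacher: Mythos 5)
Your proposal is correct and follows essentially the same route as the paper: apply the general Lyapunov theorem to $f(x) = \tfrac{1}{2}\twonorm{x - x_*}^2$ with $H = P_{A^\T S}$, check the update identity $H\nabla f(x) = (S^\T A)^{\dag}S^\T(Ax-b)$, verify strong convexity, Lipschitzness, and the progress condition (the latter holding with equality), and convert the Lyapunov decay into the stated bound via $\mu G^{-1} \preccurlyeq I$ and Jensen. The only cosmetic difference is that you derive the update identity from $B^{\dag}B = P_{\mathcal{R}(B^\T)}$ while the paper expands the projector explicitly; these are equivalent.
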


Specialized to the setting of \cite{liu16} where each row of $A$
has unit norm and is sampled uniformly at every iteration,
it can be shown
(Section~\ref{sec:proofs:compute_nu_mu_liu_wright})
that $\nu \leq m$ and $\mu = \frac{1}{m} \lambda_{\min}(A^\T A)$.
Hence, the above theorem states that
the iteration complexity to reach $\varepsilon$ error
is $O\left(\frac{m}{\sqrt{\lambda_{\min}(A^\T A)}} \log(1/\varepsilon)\right)$,
which matches Theorem~5.1 of \cite{liu16} order-wise.
However, Theorem~\ref{thm:rand_acc_rk_simple} applies in general
for any sketching matrix.

\subsection{Specializing accelerated Gauss-Seidel to random coordinate sampling}
\label{sec:results:rand_coords}

We now instantiate Theorem~\ref{thm:rand_acc_gs} to
random coordinate sampling.  The $\mu$ quantity which appears
in Theorem~\ref{thm:rand_acc_gs} is identical to the quantity appearing
in the rate \eqref{eq:rand_gs_rate} of non-accelerated Gauss-Seidel.
That is, the iteration complexity to reach tolerance $\varepsilon$ is
$O\left(\sqrt{\nu \mu_{\mathrm{rand}}^{-1}} \log(1/\varepsilon)\right)$, and the only new term here is $\nu$.
In order to provide a more intuitive interpretation of the $\nu$ quantity,
we present an upper bound on $\nu$ in terms of an effective block condition number
defined as follows.
Given an index set $J \subseteq 2^{[n]}$, define the effective block condition number of
a matrix $A$ as
$\bigkappa_{\mathrm{eff},J}(A) = \frac{\max_{i \in J} A_{ii}}{\lambda_{\min}(A_{J})}$.
Note that $\bigkappa_{\mathrm{eff},J}(A) \leq \bigkappa(A_{J})$ always.
The following lemma gives upper and lower bounds on the $\nu$ quantity.
\begin{lemma}
\label{lemma:nu_upper_bound}
Let $A$ be an $n \times n$ positive definite matrix and let $p$ satisfy $1 < p < n$.
We have that
    \begin{align*}
        \frac{n}{p} \leq \nu \leq \frac{n}{p} \left( \frac{p-1}{n-1} + \left(1-\frac{p-1}{n-1}\right) \bigkappa_{\mathrm{eff},p}(A) \right) \:,
    \end{align*}
where $\bigkappa_{\mathrm{eff},p}(A) = \max_{J \subseteq 2^{[n]} : \abs{J}=p} \bigkappa_{\mathrm{eff},J}(A)$,
$\nu$ is defined in \eqref{eq:gs_variance_term}, and the distribution of $S$
corresponds to uniformly selecting $p$ coordinates without replacement.
\end{lemma}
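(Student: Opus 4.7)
The plan is to prove the two inequalities by very different arguments. The lower bound is a clean trace/rank estimate applied to the normalized matrix $T := G^{-1/2} H G^{-1/2}$, which is PSD of rank at most $p$ almost surely, satisfies $\E[T] = G^{-1/2} G G^{-1/2} = I_n$, and by construction has $\nu = \lambda_{\max}(\E[T^2])$. For the upper bound, I reduce, via the rank structure of $T$, to a deterministic-in-$J$ operator bound on a principal submatrix of $G^{-1}$.

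For the lower bound, I first note that any PSD matrix of rank at most $p$ satisfies the pointwise Cauchy--Schwarz inequality $\Tr(T^2) = \sum_i \lambda_i(T)^2 \geq (\sum_i \lambda_i(T))^2/p = \Tr(T)^2/p$. Taking expectations and applying Jensen yields $\Tr(\E[T^2]) = \E[\Tr(T^2)] \geq (\E[\Tr T])^2/p = n^2/p$. Since $n \cdot \lambda_{\max}(\E[T^2]) \geq \Tr(\E[T^2])$, this gives $\nu \geq n/p$.

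For the upper bound, I use the pointwise inequality $T^2 \preceq \lambda_{\max}(T) \cdot T$ (valid for any PSD $T$), which upon taking expectations gives $\E[T^2] \preceq \E[\lambda_{\max}(T) T]$; if $\lambda_{\max}(T) \leq L$ almost surely for some deterministic $L$, then $\E[T^2] \preceq L \cdot \E[T] = L I$ and therefore $\nu \leq L$. To compute $\lambda_{\max}(T)$ pointwise, I change to the ``$A$-basis'' by setting $V_J = A^{1/2} S_J A_J^{-1/2}$ (whose columns form an ONB for the range of $A^{1/2} S_J$) and $\tilde G = A^{1/2} G A^{1/2} = \E[P_{A^{1/2}S}]$. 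A direct calculation shows that the nonzero eigenvalues of $T$ coincide with those of
\begin{align*}
  V_J^\T \tilde G^{-1} V_J \;=\; A_J^{-1/2} (G^{-1})_{JJ} A_J^{-1/2},
\end{align*}
so that $\lambda_{\max}(T) = \lambda_{\max}\bigl(A_J^{-1/2}(G^{-1})_{JJ} A_J^{-1/2}\bigr)$. Bounding $\nu$ therefore reduces to an almost-sure estimate on the principal submatrix $(G^{-1})_{JJ}$.

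The main technical step, and the principal obstacle, is to establish the PSD-order inequality
\begin{align*}
  (G^{-1})_{JJ} \;\preceq\; \frac{n}{p}\Bigl[\,\beta\, A_J \;+\; (1-\beta)\bigl(\max_{i \in J} A_{ii}\bigr) I_p\Bigr]
\end{align*}
uniformly in $J$. Granting this, conjugation by $A_J^{-1/2}$ yields $\lambda_{\max}(T) \leq (n/p)\bigl(\beta + (1-\beta) \kappa_{\mathrm{eff},J}(A)\bigr) \leq (n/p)(\beta + (1-\beta) \kappa_{\mathrm{eff},p}(A))$, and the preceding paragraph then gives the claimed upper bound on $\nu$. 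I would prove this submatrix inequality in the spirit of Qu--Richtarik's proof of \eqref{eq:mu_rand_lower_bound}: expand $G = \E[S_J A_J^{-1} S_J^\T]$ using the elementary sampling probabilities $\Pr[i \in J] = p/n$ and $\Pr[i,j \in J] = p(p-1)/(n(n-1))$ to separate a ``diagonal'' ($i=j$, contributing $\beta$) and an ``off-diagonal'' ($i \neq j$, contributing $1-\beta$) piece, then invert via Schur complement along the partition $J \cup J^c$ and bound the Schur correction term using the fact that off-diagonal block entries of $A$ are controlled by the diagonal maxima. The hard part is carrying out the Schur complement carefully enough to recover the $\max_{i \in J} A_{ii}$ factor (rather than the looser $\max_i A_{ii}$), because $G$ has no closed form beyond its combinatorial definition and its off-diagonal entries depend on every $p$-subset that contains $i$ and $j$; this is why the local condition number $\kappa_{\mathrm{eff},p}(A)$ appears instead of the global one.
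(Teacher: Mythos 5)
Your lower bound argument is correct and is a mild variant of the paper's own trace argument (the paper takes the trace of $\E[HG^{-1}H] \preccurlyeq \nu G$ directly and uses $\rank(A^{1/2}S)\le p$; your Cauchy--Schwarz-plus-Jensen route through $\Tr(T)^2 \le p\,\Tr(T^2)$ reaches the same conclusion). Your reduction of the upper bound is also sound as far as it goes: the identity $\lambda_{\max}(G^{-1/2}HG^{-1/2}) = \lambda_{\max}\bigl(A_J^{-1/2}(G^{-1})_{JJ}A_J^{-1/2}\bigr)$ is correct, and the pointwise step $T^2 \preccurlyeq \lambda_{\max}(T)\,T$ would indeed convert an almost-sure bound on $\lambda_{\max}(T)$ into the claimed bound on $\nu$.

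The genuine gap is the step you yourself flag as the main technical one: the inequality $(G^{-1})_{JJ} \preccurlyeq \frac{n}{p}\bigl[\beta A_J + (1-\beta)(\max_{i\in J}A_{ii})I_p\bigr]$. The route you sketch for it does not work. The $\beta$-weighted diagonal/off-diagonal decomposition with coefficients $\Pr[i\in J]=p/n$ and $\Pr[i,j\in J]=\frac{p(p-1)}{n(n-1)}$ applies to expectations that are \emph{linear} in the sampled entries of $A$, such as $\E[SS^\T A SS^\T]$; it does not apply to $G=\E[S(S^\T A S)^{-1}S^\T]$, because $A_J^{-1}$ is a nonlinear function of the sampled block, and a Schur complement of $G$ along $J\cup J^c$ would require control of the blocks $G_{JJ^c}$ and $G_{J^cJ^c}$, for which you have no expression. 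The missing ingredient---which is the actual content of the paper's proof---is an operator-Jensen-type inequality, $\E[P_M] \succcurlyeq \E[M]\,(\E[M^\T M])^{\dag}\,\E[M^\T]$ (the paper's generalization of Lemma~2 of Qu and Richt\'arik), applied with $M = A^{1/2}SS^\T A^{1/2}$. This lower-bounds $\E[P_{A^{1/2}S}]$ by a quantity involving only $\E[SS^\T]$ and $\E[SS^\T A SS^\T]$, which do admit the combinatorial decomposition, and inverting that bound yields the global inequality $G^{-1} \preccurlyeq \frac{n}{p}\bigl(\beta A + (1-\beta)\diag(A)\bigr)$; your submatrix inequality is then just its $J\times J$ restriction (the local factor $\max_{i\in J}A_{ii}$ comes for free from restricting $\diag(A)$, so your worry about losing it to the global maximum is moot). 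Without this Jensen-type lemma, or some equivalent device for lower-bounding $G$ by a computable matrix, your proof of the upper bound does not go through.
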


Lemma~\ref{lemma:nu_upper_bound} states that if the $p \times p$ sub-blocks of
$A$ are well-conditioned as defined by the effective block condition number
$\bigkappa_{\mathrm{eff},J}(A)$, then the speed-up of accelerated Gauss-Seidel with
random coordinate selection over its non-accelerate counterpart parallels the
case of fixed partitioning sampling (i.e.  the rate described in
\eqref{eq:rand_acc_nesterov} versus the rate in \eqref{eq:rand_gs_rate}).
This is a reasonable condition, since very ill-conditioned sub-blocks will lead to
numerical instabilities in solving the sub-problems when implementing Gauss-Seidel.
On the other hand, we note that Lemma~\ref{lemma:nu_upper_bound} provides merely a sufficient
condition for speed-ups from acceleration, and is conservative.
Our numerically experiments in Section~\ref{sec:experiments:mu_and_nu} suggest
that in many cases the $\nu$ parameter behaves closer to the lower bound $n/p$
than Lemma~\ref{lemma:nu_upper_bound} suggests.
We leave a more thorough theoretical analysis of this parameter
to future work.

We can now combine Theorem~\ref{thm:rand_acc_gs} with \eqref{eq:mu_rand_lower_bound}
to derive the following upper bound on the iteration complexity of
accelerated Gauss-Seidel with random coordinates as
\begin{align*}
  N_{\mathrm{rand,acc}} \leq O\left( \frac{n}{p} \sqrt{ \frac{\max_{1 \leq i \leq n} A_{ii}}{\lambda_{\min}(A)} \bigkappa_{\mathrm{eff},p}(A)} \log(1/\varepsilon) \right) \:.
\end{align*}

\paragraph{Illustrative example.}

We conclude our results by illustrating our bounds on a simple
example.
Consider the sub-family $\{A_{\delta}\}_{\delta > 0} \subseteq \mathscr{A}$, with
\begin{align}
    A_{\delta} = A_{n+\delta,-n} \:, \;\; \delta > 0 \:. \label{eq:model1}
\end{align}
A simple calculation yields that
$\bigkappa_{\mathrm{eff},p}(A_\delta) = \frac{n-1+\delta}{n-p+\delta}$,
and hence
Lemma~\ref{lemma:nu_upper_bound} states that
$\nu(A_{\delta}) \leq \frac{n}{p}\left(1 + \frac{p-1}{n-1}\right)$.
Furthermore, by a similar calculation to Proposition~\ref{prop:simple_calcs_model2_random},
$\mu_{\mathrm{rand}} = \frac{p\delta}{n(n-p+\delta)}$.
Assuming for simplicity that $p = o(n)$ and $\delta \in (0,1)$,
Theorem~\ref{thm:rand_acc_gs} states that
at most $O(\frac{n^{3/2}}{p\sqrt{\delta}}\log(1/\varepsilon))$ iterations are sufficient
for an $\varepsilon$-accurate solution.
On the other hand, without acceleration
\eqref{eq:rand_gs_rate} states that $O(\frac{n^2}{p\delta}\log(1/\varepsilon))$
iterations are sufficient and Proposition~\ref{prop:lower_bound_gs}
shows there exists a starting position for which it is necessary.
Hence, as either $n$ grows large or $\delta$ tends to zero,
the benefits of acceleration become more pronounced.

\section{Related Work}
\label{sec:related}

We split the related work into two broad categories of interest: (a) work related 
to coordinate descent (CD) methods on convex functions
and (b) randomized solvers designed for solving consistent linear systems.

When $A$ is positive definite,
Gauss-Seidel can be interpreted as an instance of coordinate
descent on a strongly convex quadratic function. We therefore review related work
on both non-accelerated and accelerated coordinate descent, focusing
on the randomized setting instead of the more classical cyclic order or Gauss-Southwell rule
for selecting the next coordinate. See \cite{tseng09} for a discussion on
non-random selection rules, \cite{nutini15} for a comparison of
random selection versus Gauss-Southwell, and \cite{nutini16}
for efficient implementations of Gauss-Southwell.

Nesterov's original paper in~\cite{nesterov12} first considered randomized CD on
convex functions, assuming a partitioning of coordinates fixed ahead of time.
The analysis included both non-accelerated and accelerated variants for convex
functions.  This work sparked a resurgence of interest in CD methods for large
problems. Most relevant to our paper are extensions to the block
setting \cite{richtarik14}, handling arbitrary sampling distributions
\cite{qu14a, qu14b, fountoulakis16}, and second order updates for quadratic
functions \cite{qu16}. 

For accelerated CD, Lee and Sidford~\cite{lee13} generalize the analysis of Nesterov~\cite{nesterov12}.
While the analysis of \cite{lee13} was limited to
selecting a single coordinate at a time, several follow on works \cite{qu14a,
lin14, lu15, fercoq15} generalize to block and non-smooth settings.  More
recently, both Allen-Zhu et al.~\cite{allenzhu16} and Nesterov and Stich~\cite{nesterov16} independently improve
the results of \cite{lee13} by using a different non-uniform sampling
distribution. One of the most notable aspects of the analysis in
\cite{allenzhu16} is a departure from the (probabilistic) estimate sequence
framework of Nesterov. 
Instead, the authors construct a valid
Lyapunov function for coordinate descent, although they do not explicitly
mention this. 
In our work, we make this Lyapunov point of view explicit.
The constants in our acceleration updates arise
from a particular discretization and Lyapunov function outlined from
Wilson et al.~\cite{wilson16}. Using this framework makes our proof particularly transparent,
and allows us to recover results for strongly convex functions from \cite{allenzhu16}
and \cite{nesterov16} as a special case.

From the numerical analysis side 
both the Gauss-Seidel and Kaczmarz algorithm
are classical methods. Strohmer and Vershynin~\cite{strohmer09} were the first
to prove a linear rate of convergence for randomized Kaczmarz, and
Leventhal and Lewis~\cite{leventhal10} provide a similar kind of analysis 
for randomized Gauss-Seidel. Both of these were in the single 
constraint/coordinate setting. The block setting was later analyzed by Needell and Tropp~\cite{needell14}.
More recently, Gower and Richt{\'{a}}rik~\cite{gower15} provide a unified analysis
for both randomized block Gauss-Seidel and Kaczmarz in the sketching framework. We adopt this
framework in this paper.
Finally, Liu and Wright~\cite{liu16} provide an accelerated analysis of randomized Kaczmarz once again
in the single constraint setting and we extend this to the block setting. 

\section{Experiments}
\label{sec:experiments}

In this section we experimentally validate our theoretical results on how our
accelerated algorithms can improve convergence rates. Our experiments use a
combination of synthetic matrices and matrices from large scale machine
learning tasks.

\textbf{Setup.} We run all our experiments on a 4 socket Intel Xeon CPU E7-8870 machine with 18
cores per socket and 1TB of DRAM. We implement all our algorithms in Python using \texttt{numpy},
and use the Intel MKL library with 72 OpenMP threads for numerical operations.
We report errors as relative errors,
i.e. $\norm{x_k - x_*}_{A}^2/\norm{x_*}_{A}^2$.
Finally, we use the best values of $\mu$ and $\nu$ found by tuning each experiment.

We implement fixed partitioning by creating random blocks of coordinates at the beginning of the
experiment and cache the corresponding matrix blocks to improve performance.
For random coordinate sampling, we select a new block of coordinates at each iteration.

For our fixed partition experiments, we restrict our attention to uniform sampling.
While Gower and Richt{\'{a}}rik~\cite{gower15} propose a non-uniform scheme based on $\Tr(S^\T A S)$,
for translation-invariant kernels this reduces to uniform sampling.
Furthermore, as the kernel block Lipschitz constants were also roughly the same, other
non-uniform schemes~\cite{allenzhu16} also reduce to nearly uniform sampling.

\subsection{Fixed partitioning vs random coordinate sampling}
\label{subsec:fixed_vs_random}

Our first set of experiments numerically verify the separation between
fixed partitioning sampling versus random coordinate sampling.

\begin{figure}[t!]
  \centering
  \begin{minipage}[t]{0.45\textwidth}
  \begin{center}
  \includegraphics[width=\columnwidth]{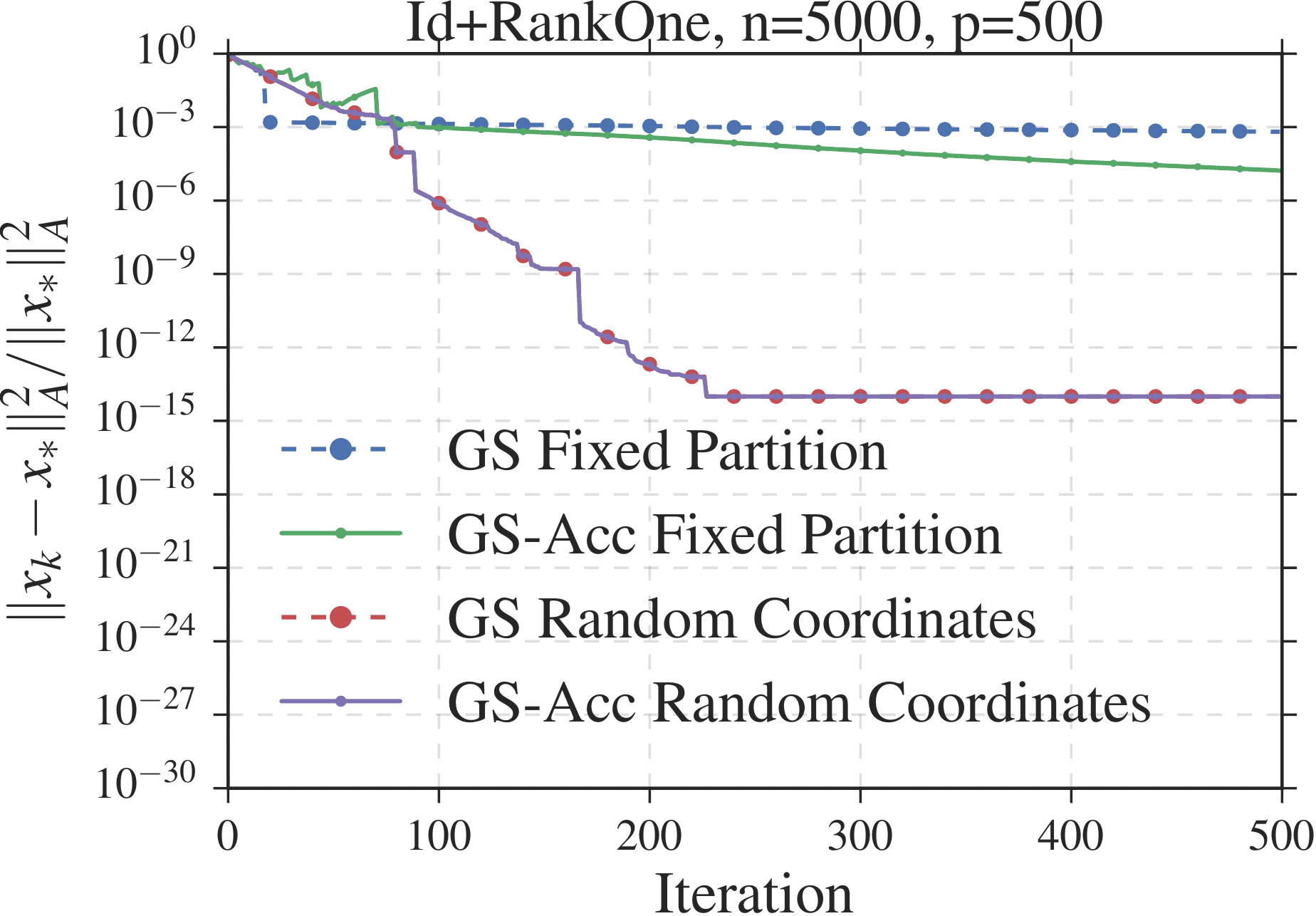}
  \end{center}
  \caption{Experiments comparing fixed partitions versus random coordinate sampling for the example from
Section~\ref{sec:results:separation} with $n=5000$ coordinates, block size $p=500$.}
    \label{fig:toy_parts_vs_coords}
  \end{minipage}
  \hspace{.06\textwidth}
    \begin{minipage}[t]{0.45\textwidth}
  \begin{center}
    \includegraphics[width=\columnwidth]{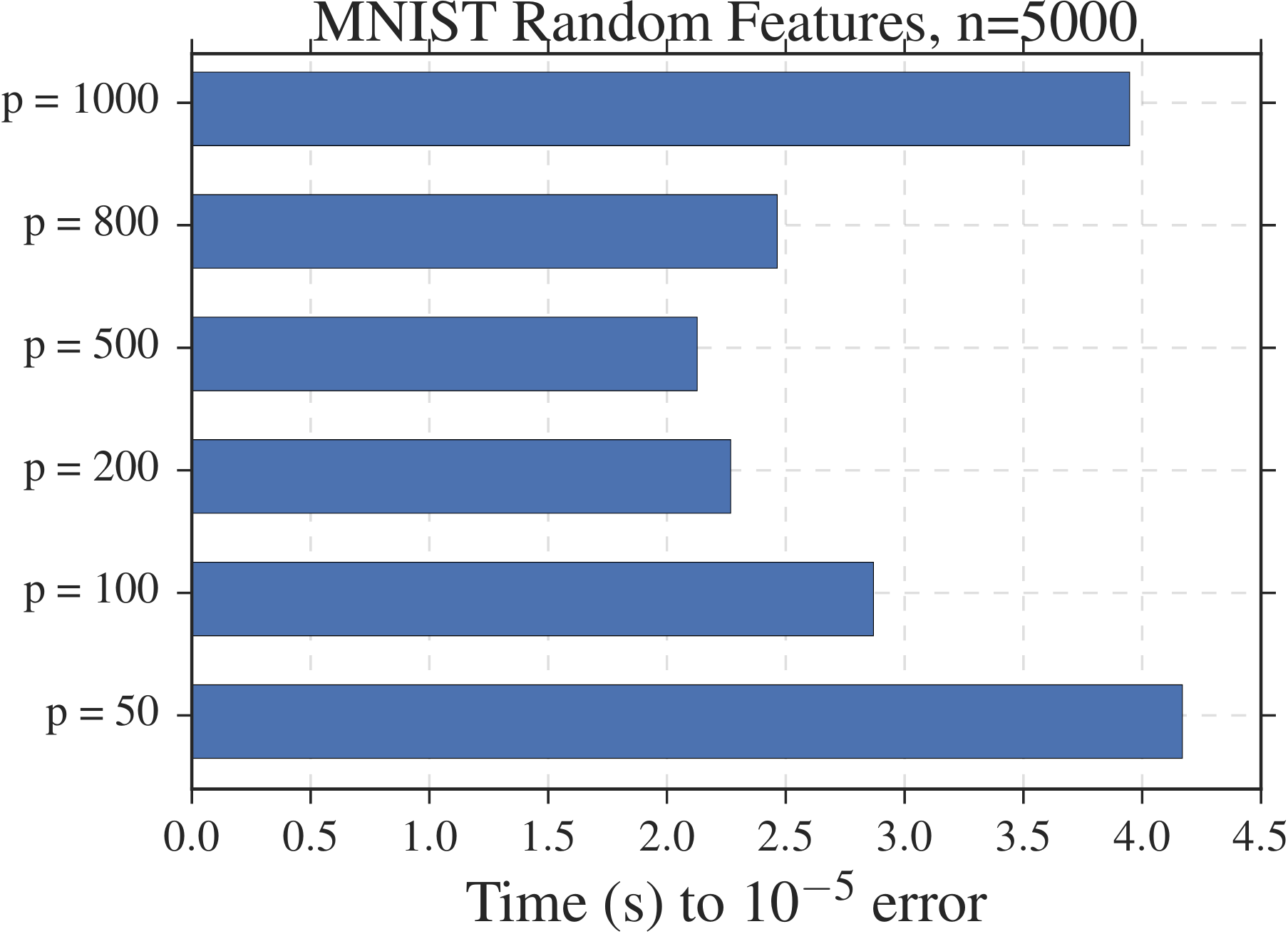}
  \end{center}
  \caption{The effect of block size on the accelerated Gauss-Seidel method. For the MNIST dataset
(pre-processed using random features) we see that block size of $p=500$ works best.}
    \label{fig:block_size_effect}
    \end{minipage}
\end{figure}

Figure~\ref{fig:toy_parts_vs_coords} shows the progress per iteration on
solving $A_{1,\beta} x = b$, with the $A_{1,\beta}$ defined
in Section~\ref{sec:results:separation}. Here
we set $n=5000$, $p=500$, $\beta=1000$, and $b \sim N(0, I)$.
Figure~\ref{fig:toy_parts_vs_coords} verifies our analytical findings in
Section~\ref{sec:results:separation}, that the fixed partition scheme
is substantially worse than uniform sampling on this instance.
It also shows that in this case, acceleration provides little benefit in the
case of random coordinate sampling.
This is because both $\mu$ and $1/\nu$ are order-wise $p/n$, and
hence the rate for accelerated and non-accelerated coordinate descent
coincide. However we note that this only applies
for matrices where $\mu$ is as large as it can be (i.e. $p/n$), that is
instances for which Gauss-Seidel is already converging at the optimal rate
(see \cite{gower15}, Lemma 4.2).

\begin{figure}[t!]
  \centering
  \begin{minipage}[t]{0.45\textwidth}
  \begin{center}
    \includegraphics[width=\columnwidth]{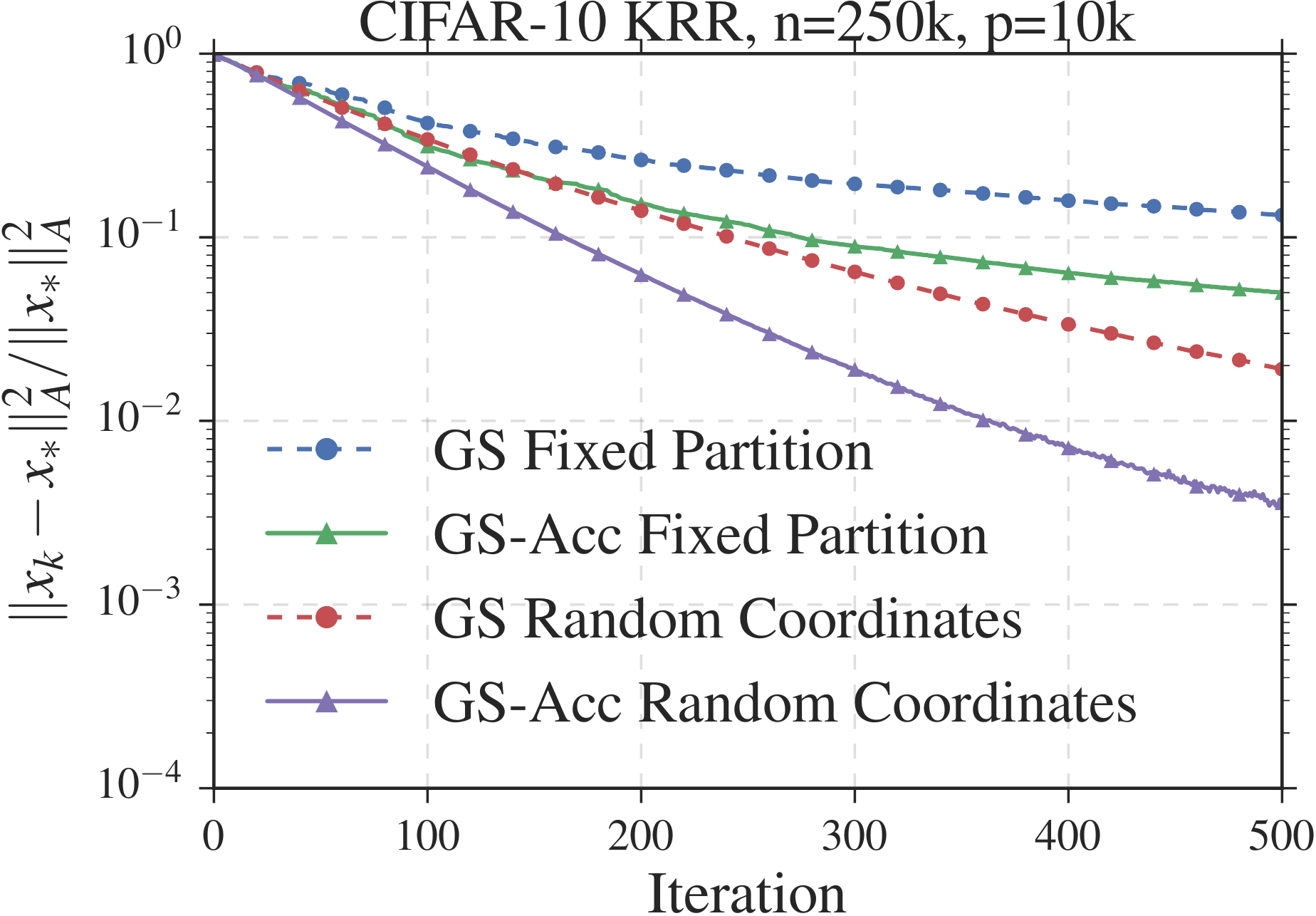}
  \end{center}
  \caption{Experiments comparing fixed partitions versus uniform random sampling for CIFAR-10
augmented matrix while running kernel ridge regression.
The matrix has $n=250000$ coordinates and we set block size to $p=10000$.
    }
  \label{fig:krr_parts_vs_coords}
  \end{minipage}
  \hspace{.06\textwidth}
  \begin{minipage}[t]{0.45\textwidth}
    \begin{center}
    \includegraphics[width=\columnwidth]{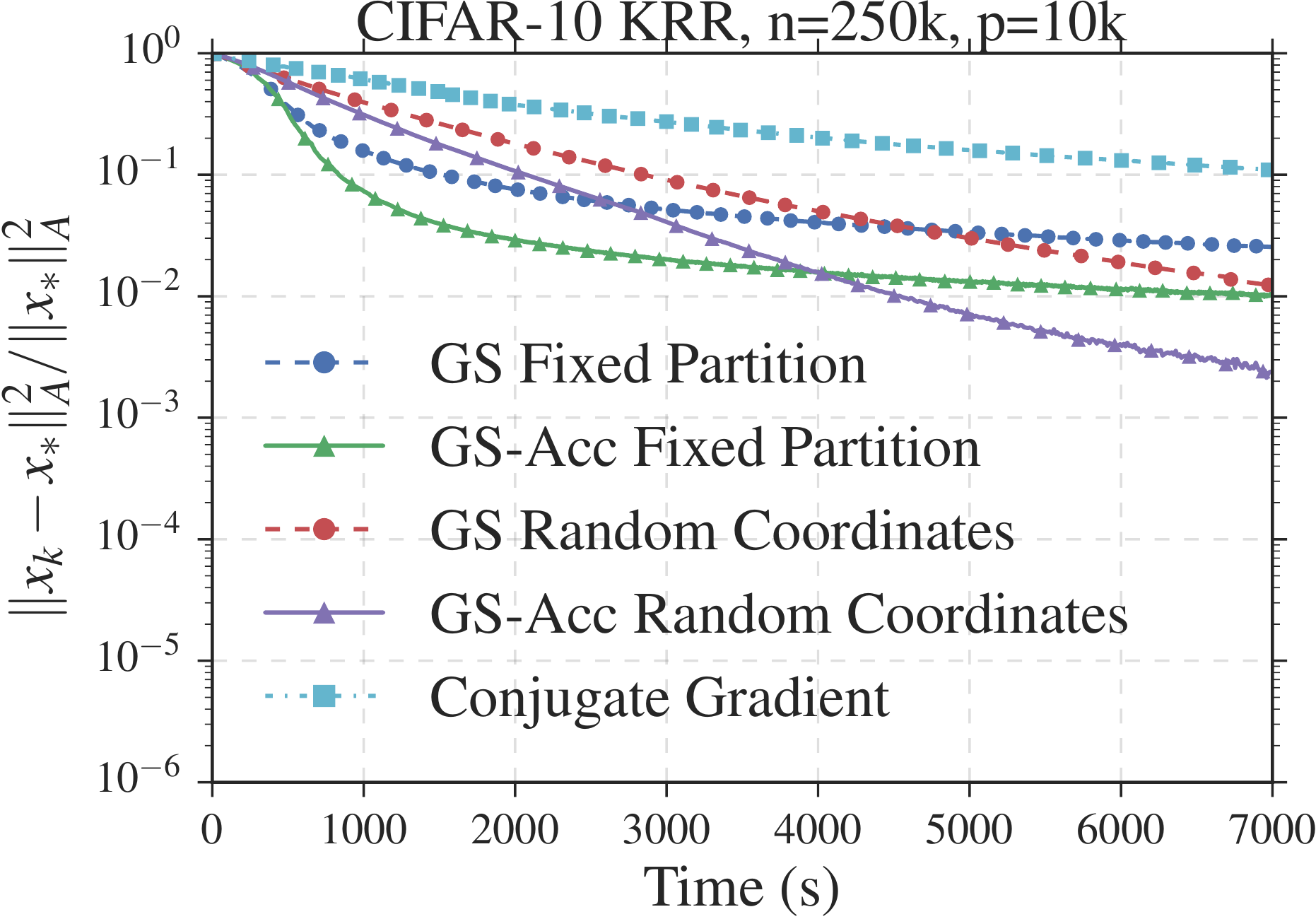}
    \end{center}
    \caption{Comparing conjugate gradient with accelerated and un-accelerated Gauss-Seidel
  methods for CIFAR-10 augmented matrix while running kernel ridge regression.
  The matrix has $n=250000$ coordinates and we set block size to $p=10000$.}
    \label{fig:krr_parts_vs_coords_vs_cg}
  \end{minipage}

\end{figure}

\subsection{Kernel ridge regression}
\label{sec:experiments:parts_vs_coords:krr}

We next evaluate how fixed partitioning and random coordinate sampling affects
the performance of Gauss-Seidel on large scale machine learning tasks.
We use the popular image classification dataset CIFAR-10 and evaluate a kernel ridge regression
(KRR) task with a Gaussian kernel.
Specifically, given a labeled dataset $\{(x_i, y_i)\}_{i=1}^{n}$,
we solve the linear system
$(K + \lambda I) \alpha = Y$
with $K_{ij} = \exp(-\gamma \twonorm{x_i - x_j}^2)$,
where $\lambda, \gamma > 0$ are tunable parameters
(see e.g.~\cite{scholkopf01} for background on KRR).
The key property of KRR is that the kernel matrix $K$ is positive semi-definite, and hence
Algorithm~\ref{alg:rand_acc_gs} applies.

For the CIFAR-10 dataset, we augment the dataset\footnote{Similar to
\url{https://github.com/akrizhevsky/cuda-convnet2}.} to include five
reflections, translations per-image and then apply standard
pre-processing steps used in image classification~\cite{coates12, sparks16}.
We finally apply a Gaussian kernel on our pre-processed images and
the resulting kernel matrix has $n=250000$ coordinates.

Results from running 500 iterations of random coordinate sampling and fixed partitioning algorithms
are shown in Figure~\ref{fig:krr_parts_vs_coords}. Comparing convergence across iterations, similar
to previous section, we see that un-accelerated Gauss-Seidel with random coordinate sampling is
better than accelerated Gauss-Seidel with fixed partitioning. However we also see that using
acceleration with random sampling can further improve the convergence rates, especially to achieve
errors of $10^{-3}$ or lower.

We also compare the convergence with respect to running time in
Figure~\ref{fig:krr_parts_vs_coords_vs_cg}. Fixed partitioning has better performance in practice
random access is expensive in multi-core systems. However, we see that this speedup in implementation
comes at a substantial cost in terms of convergence rate.
For example in the case of CIFAR-10, using fixed partitions leads to an error of $1.2 \times
10^{-2}$ after around 7000 seconds. In comparison
we see that random coordinate sampling achieves a similar error in around 4500 seconds and is thus
$1.5 \times$ faster. We also note that this speedup increases for lower error tolerances.

\subsection{Comparing Gauss-Seidel to Conjugate-Gradient}
We also compared Gauss-Seidel with random coordinate sampling to the classical
conjugate-gradient (CG) algorithm. CG is an important baseline to compare with, as
it is the de-facto standard iterative algorithm for solving linear systems
in the numerical analysis community.
While we report the results of CG without preconditioning, we remark that the
performance using a standard banded preconditioner was not any better.
However, for KRR specifically, there have been recent efforts~\cite{avron17,rudi17}
to develop better preconditioners, and we leave a more thorough comparison
for future work.
The results of our experiment are shown in Figure~\ref{fig:krr_parts_vs_coords_vs_cg}.
We note that Gauss-Seidel both with and without acceleration outperform CG.
As an example, we note that to reach error $10^{-1}$ on CIFAR-10, CG takes
roughly 7000 seconds, compared to less than 2000 seconds for
accelerated Gauss-Seidel, which is a $3.5 \times$ improvement.

To understand this performance difference, we recall that our
matrices $A$ are fully dense, and hence each iteration of CG takes $O(n^2)$.
On the other hand, each
iteration of both non-accelerated and accelerated Gauss-Seidel takes $O(np^2 +
p^3)$. Hence, as long as $p = O(n^{2/3})$, the time per iteration of
Gauss-Seidel is order-wise no worse than CG.  In terms of iteration complexity,
standard results state that CG takes at most $O(\sqrt{\kappa}
\log(1/\varepsilon))$ iterations to reach an $\varepsilon$ error solution,
where $\kappa$ denotes the condition number of $A$.  On the other hand,
Gauss-Seidel takes at most
$O(\frac{n}{p} \kappa_{\mathrm{eff}} \log(1/\varepsilon))$, where $\kappa_{\mathrm{eff}} =
\frac{\max_{1 \leq i \leq n} A_{ii}}{\lambda_{\min}(A)}$.  In the case of any
(normalized) kernel matrix associated with a translation-invariant kernel such
as the Gaussian kernel, we have $\max_{1 \leq i \leq n} A_{ii} = 1$, and hence
generally speaking $\kappa_{\mathrm{eff}}$ is much lower than $\kappa$.

\subsection{Kernel ridge regression on smaller datasets}
\label{sec:experiments:parts_vs_coords:krr:mnist}

In addition to using the large CIFAR-10 augmented dataset, we also tested our algorithms on the
smaller MNIST\footnote{\url{http://yann.lecun.com/exdb/mnist/}} dataset. To generate a kernel matrix, we
applied the Gaussian kernel on the raw MNIST pixels to generate a matrix $K$ with $n=60000$
rows and columns.

Results from running 500 iterations of random coordinate sampling and fixed partitioning algorithms
are shown in Figure~\ref{fig:krr_parts_vs_coords:mnist}. We plot the convergence rates both across time and
across iterations. Comparing convergence across iterations we see that random
coordinate sampling is essential to achieve errors of $10^{-4}$ or lower.
In terms of running time, similar to the CIFAR-10 experiment, we see that the benefits in fixed
partitioning of accessing coordinates faster comes at a cost in terms of convergence
rate, especially to achieve errors of $10^{-4}$ or lower.

\begin{figure}[h]
  \centering
  \begin{minipage}[t]{0.45\textwidth}
  \begin{center}
  \includegraphics[width=\columnwidth]{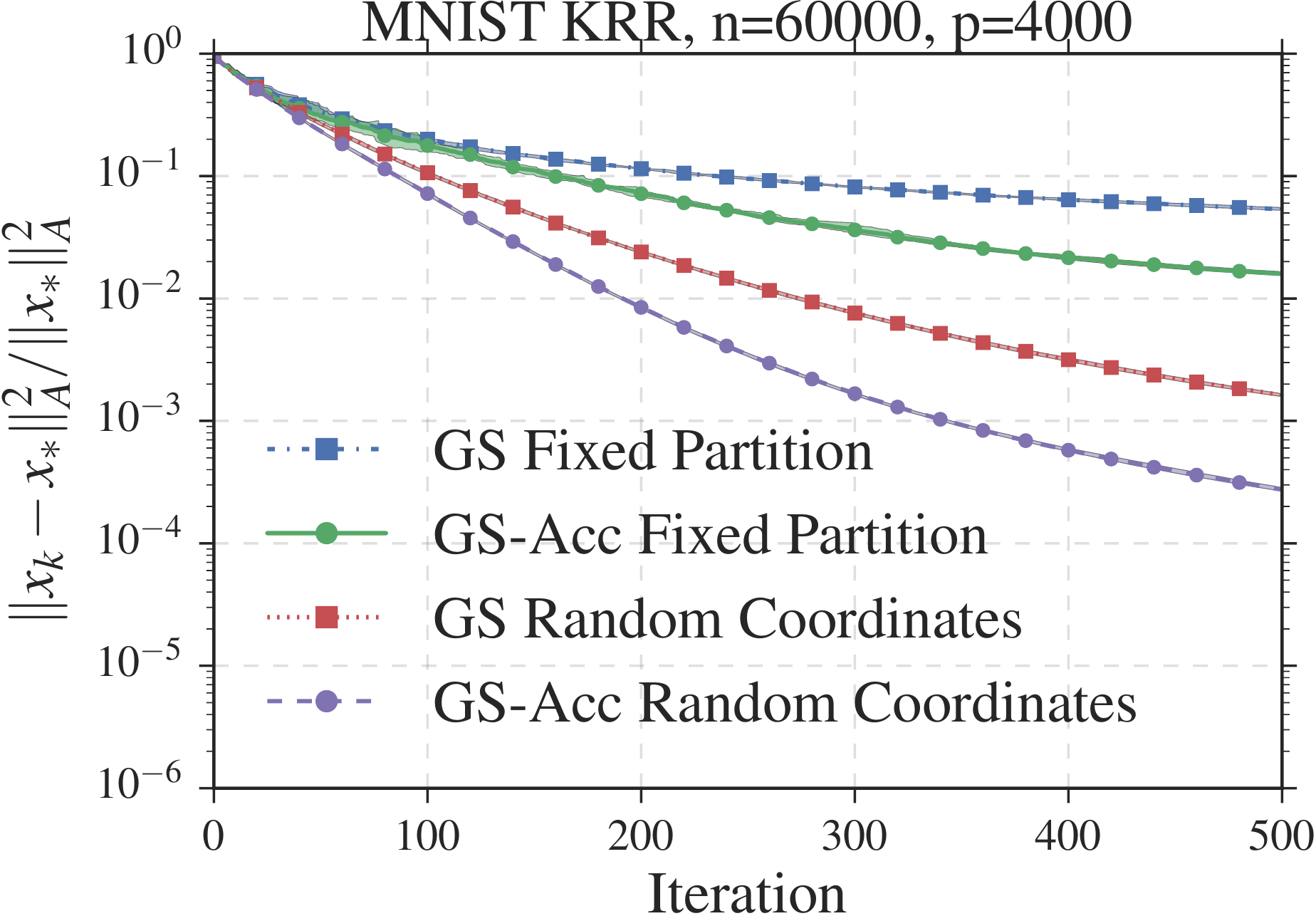}
  \end{center}
  \end{minipage}
  \hspace{.06\textwidth}
  \begin{minipage}[t]{0.45\textwidth}
  \begin{center}
  \includegraphics[width=\columnwidth]{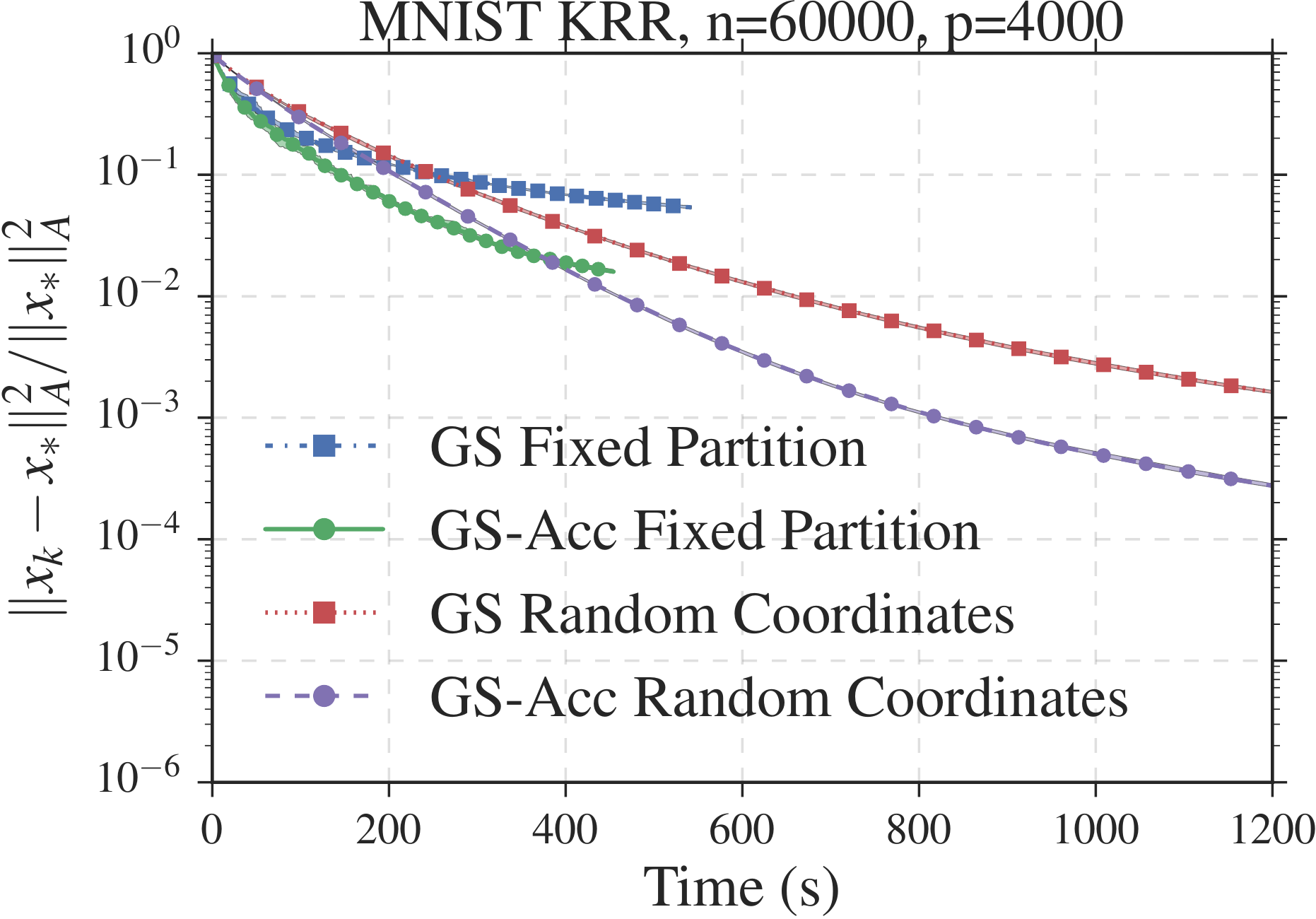}
  \end{center}
  \end{minipage}
  \caption{Experiments comparing fixed partitions versus uniform random sampling for MNIST
while running kernel ridge regression.
MNIST has $n=60000$ coordinates and we set block size to $p=4000$.}
    \label{fig:krr_parts_vs_coords:mnist}
\end{figure}

\subsection{Effect of block size}
We next analyze the importance of the block size $p$ for the accelerated Gauss-Seidel method. As the
values of $\mu$ and $\nu$ change for each setting of $p$, we use a smaller MNIST matrix for this
experiment. We apply a random feature transformation~\cite{rahimi07} to generate an $n \times d$
matrix $F$ with $d=5000$ features. We then use $A = F^\T F$ and $b = F^\T Y$ as inputs to the
algorithm. Figure~\ref{fig:block_size_effect} shows the wall clock time to
converge to $10^{-5}$ error as we vary the block size from $p=50$ to $p=1000$.

Increasing the block-size improves the amount of progress that is made per iteration but the time
taken per iteration increases as $O(p^3)$ (Line~\ref{alg:line:localupdate},
Algorithm~\ref{alg:rand_acc_gs}). However, using efficient BLAS-3 primitives usually affords a
speedup from systems techniques like cache blocking. We see the effects of this in
Figure~\ref{fig:block_size_effect} where using $p=500$ performs better than using $p=50$. We also
see that these benefits reduce for much larger block sizes and thus $p=1000$ is slower.

\begin{figure}[t!]
  \centering
  \begin{minipage}[t]{0.32\textwidth}
  \begin{center}
  \includegraphics[width=\columnwidth]{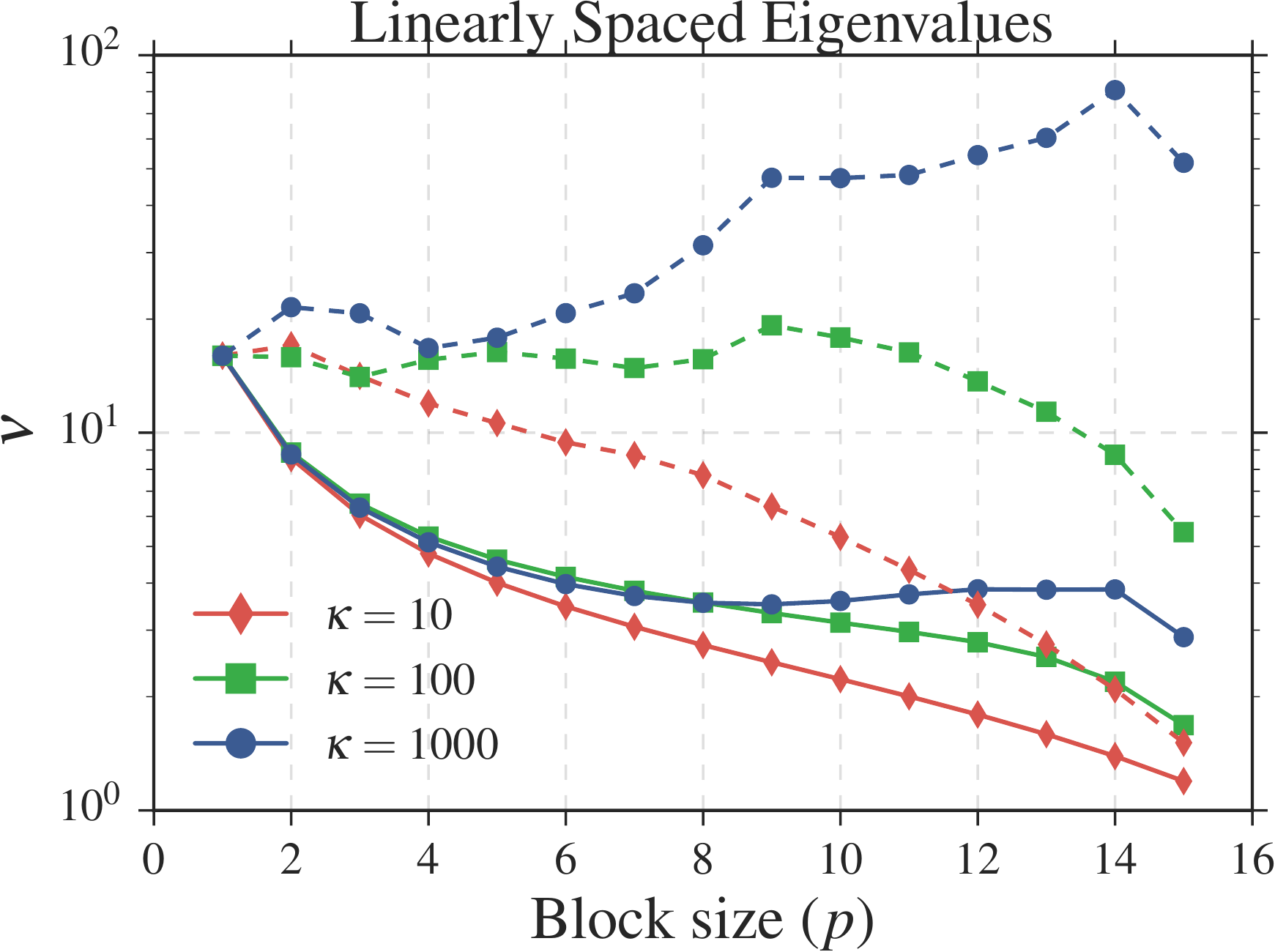}
  \end{center}
  \end{minipage}
    \begin{minipage}[t]{0.32\textwidth}
    \begin{center}
  \includegraphics[width=\columnwidth]{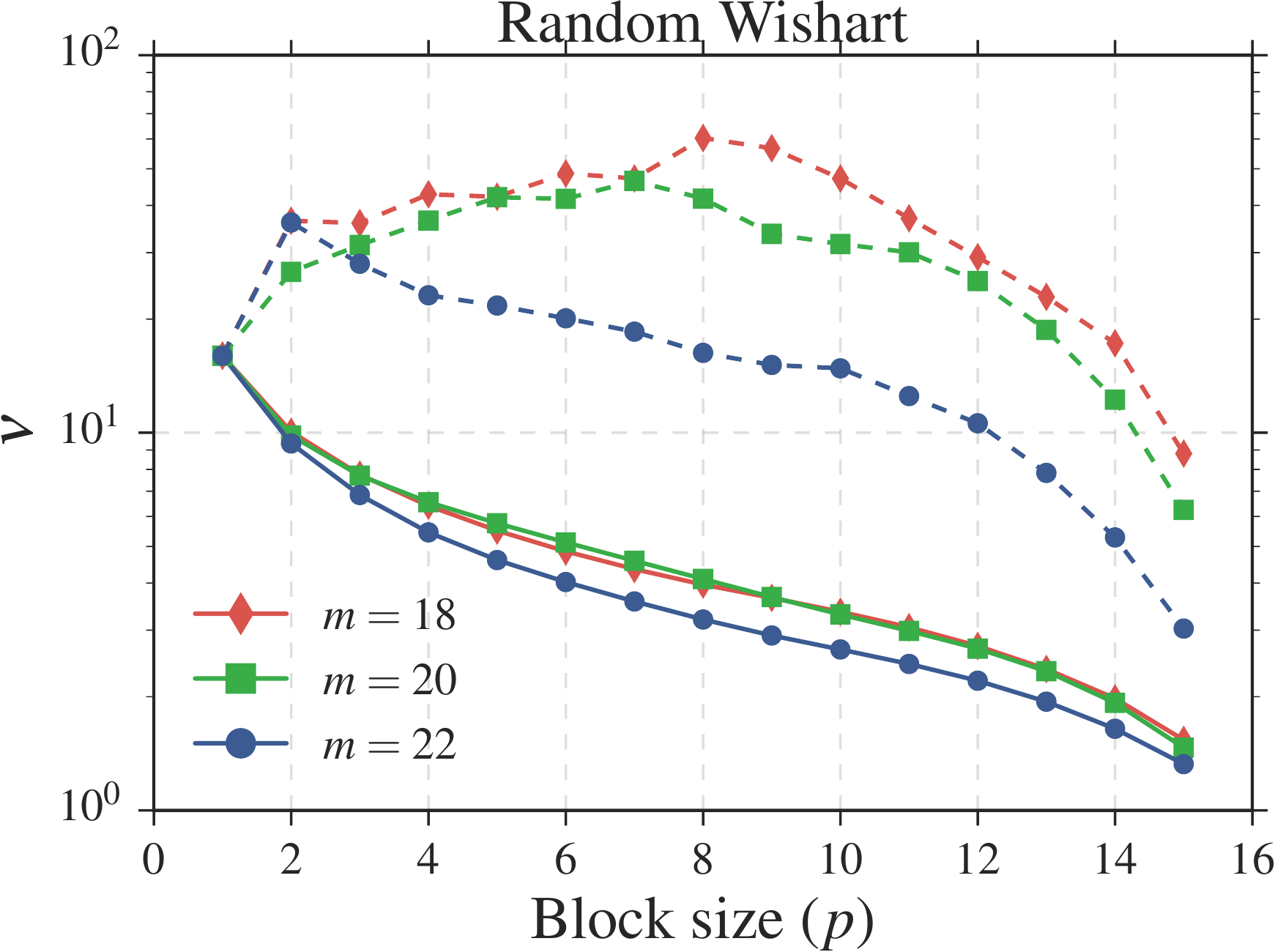}
  \end{center}
  \end{minipage}
  \begin{minipage}[t]{0.32\textwidth}
  \begin{center}
  \includegraphics[width=\columnwidth]{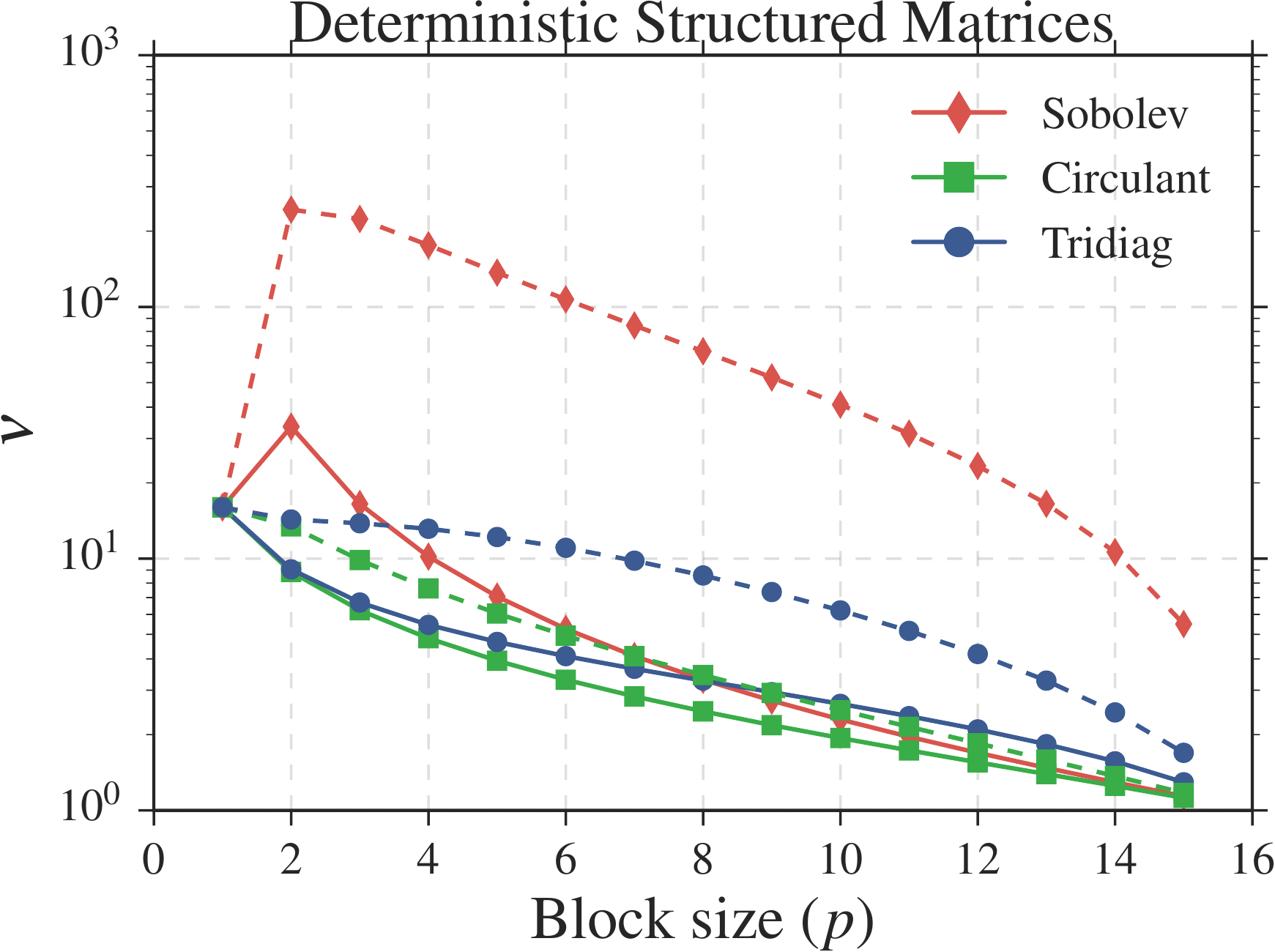}
  \end{center}
  \end{minipage}
    \caption{Comparison of the computed $\nu$ constant (solid lines) and $\nu$ bound
        from Theorem~\ref{thm:rand_acc_gs} (dotted lines) on random matrices
        with linearly spaced eigenvalues and random Wishart matrices. }
    \label{fig:nu_linspace_gaussian}
\end{figure}

\subsection{Computing the $\mu$ and $\nu$ constants}
\label{sec:experiments:mu_and_nu}

In our last experiment, we explicitly compute the $\mu$ and $\nu$ constants from
Theorem~\ref{thm:rand_acc_gs} for a few $16 \times 16$ positive definite matrices constructed as follows.

\noindent\textbf{Linearly spaced eigenvalues.}
We first draw $Q$ uniformly at random from $n \times n$ orthogonal matrices.
We then construct $A_i = Q \Sigma_i Q^\T$ for $i=1,2,3$, where
$\Sigma_1$ is \texttt{diag(linspace(1, 10, 16))}, $\Sigma_2$ is \texttt{diag(linspace(1, 100, 16))},
and $\Sigma_3$ is \texttt{diag(linspace(1, 1000, 16))}.

\noindent\textbf{Random Wishart.}
We first draw $B_i$ with iid $N(0, 1)$ entries, where
$B_i \in \R^{m_i \times 16}$ with $m_1 = 18$, $m_2 = 20$, and $m_3 = 22$.
We then set $A_i = B_i^\T B_i$.

\noindent\textbf{Sobolev kernel.}
We form the matrix $A_{ij} = \min(i, j)$ with $1 \leq i,j  \leq n$.
This corresponds to the gram matrix for the set of points
$x_1, ..., x_n \in \R$ with $x_i = i$ under the Sobolev kernel $\min(x,y)$.

\noindent\textbf{Circulant matrix.}
We let $A$ be a $16 \times 16$ instance of the family of circulant matrices
$A_n = F_n \diag(c_n) F_n^*$ where $F_n$ is the $n \times n$ unitary DFT matrix and
$c_n = (1, 1/2, ..., 1/(n/2+1), ..., 1/2, 1)$.
By construction this yields a real valued circulant matrix which is positive definite.

\noindent\textbf{Tridiagonal matrix.}
We let $A$ be a tridiagonal matrix with the diagonal value equal to one,
and the off diagonal value equal to $(\delta-a)/(2\cos(\pi n/(n+1)))$ for $\delta = 1/10$.
The matrix has a minimum eigenvalue of $\delta$.
\\

Figure~\ref{fig:nu_linspace_gaussian} shows the results of our computation
for the linearly spaced eigenvalues ensemble, the random Wishart ensemble and
the other deterministic structured matrices.
Alongside with the actual $\nu$ values, we plot the bound given for each instance
by Lemma~\ref{lemma:nu_upper_bound}. From the figures we see that our bound is quite close
to the computed value of $\nu$ for circulant matrices and for random matrices with linearly spaced
eigenvalues with small $\kappa$. We plan to extend our analysis to derive a tighter bound in the
future.

\section{Conclusion}
\label{sec:conclusion}

In this paper, we extended the accelerated block Gauss-Seidel algorithm beyond fixed partition
sampling.  Our analysis introduced a new data-dependent parameter $\nu$ which governs the speed-up
of acceleration. Specializing our theory to random coordinate
sampling, we derived an upper bound on $\nu$ which shows that well conditioned blocks are a
sufficient condition to ensure speedup.  Experimentally, we showed that 
random coordinate sampling is readily accelerated beyond what our bound suggests.

The most obvious question remains to derive a sharper bound on the $\nu$ constant from Theorem
\ref{thm:rand_acc_gs}. 
Another interesting question is whether or not
the iteration complexity of random coordinate sampling
is always bounded above by the iteration complexity with fixed coordinate sampling.

We also plan to study an implementation of accelerated
Gauss-Seidel in a distributed setting~\cite{tu16}.  The main challenges
here are in determining how to sample coordinates
without significant communication overheads, and to efficiently estimate
$\mu$ and $\nu$.  To do this, we wish to
explore other sampling schemes such as shuffling the coordinates at the
end of every epoch~\cite{recht2013parallel}.

\section*{Acknowledgements}
We thank Ross Boczar for assisting us with Mathematica support for non-commutative algebras,
Orianna DeMasi for providing useful feedback on earlier drafts of this manuscript,
and the anonymous reviewers for their helpful feedback.
ACW is supported by an NSF Graduate Research Fellowship. BR is generously supported by ONR awards N00014-11-1-0723 and N00014-13-1-0129, NSF award
CCF-1359814, the DARPA Fundamental Limits of Learning (Fun LoL) Program, a Sloan Research
Fellowship, and a Google Research Award.
This research is supported in part by DHS Award HSHQDC-16-3-00083, NSF CISE Expeditions Award
CCF-1139158, DOE Award SN10040 DE-SC0012463, and DARPA XData Award FA8750-12-2-0331, and gifts from
Amazon Web Services, Google, IBM, SAP, The Thomas and Stacey Siebel Foundation, Apple Inc., Arimo,
Blue Goji, Bosch, Cisco, Cray, Cloudera, Ericsson, Facebook, Fujitsu, HP, Huawei, Intel, Microsoft,
Mitre, Pivotal, Samsung, Schlumberger, Splunk, State Farm and VMware.

{
\small
\bibliography{paper}
\bibliographystyle{abbrv}
}

\clearpage
\appendix
\onecolumn
\renewcommand{\thesection}{A.\arabic{section}}

\section{Preliminaries}

\paragraph{Notation.}
The notation is standard.
$[n] = \{1, 2, ..., n\}$ refers to the set of integers from $1$ to $n$,
and $2^{[n]}$ refers to the set of all subsets of $[n]$.
We let $\ind_n \in \R^{n}$ denote the vector of all ones.
Given a square matrix $M$ with real eigenvalues, we let $\lambda_{\max}(M)$ (resp. $\lambda_{\min}(M)$)
denote the maximum (resp. minimum) eigenvalue of $M$.
For two symmetric matrices $M, N$, the notation $M \succcurlyeq N$ (resp. $M \succ N$)
means that the matrix $M - N$ is positive semi-definite (resp. positive definite).
Every such $M \succ 0$ defines a real inner product space via
the inner product $\ip{x}{y}_{M} = x^\T M y$. We refer to its
induced norm as $\norm{x}_{M} = \sqrt{\ip{x}{x}_{M}}$. The standard
Euclidean inner product and norm will be denoted as $\ip{\cdot}{\cdot}$ and
$\twonorm{\cdot}$, respectively.
For an arbitrary matrix $M$, we let $M^{\dag}$ denote its Moore-Penrose pseudo-inverse
and $P_{M}$ the orthogonal projector onto the range of $M$, which we denote as
$\mathcal{R}(M)$.
When $M \succcurlyeq 0$, we let $M^{1/2}$ denote its unique Hermitian square root.
Finally, for a square $n \times n$ matrix $M$, $\diag(M)$ is the $n \times n$ diagonal matrix which
contains the diagonal elements of $M$.

\paragraph{Partitions on $[n]$.} In what follows,
unless stated otherwise, whenever we discuss a partition of $[n]$
we assume that the partition is given by $\bigcup_{i=1}^{n/p} J_i$, where
\begin{align*}
    J_1 = \{1, 2, ..., p\} \:, \:\: J_2 = \{p+1, p+2, ..., 2p\}, \:\: ... \:.
\end{align*}
This is without loss of generality because for any arbitrary equal sized partition of $[n]$,
there exists a permutation matrix $\Pi$ such that all our results apply by the change of variables
$A \gets \Pi^\T A \Pi$ and $b \gets \Pi^\T b$.

\section{Proofs for Separation Results (Section~\ref{sec:results:separation})}

\subsection{Expectation calculations (Propositions \ref{prop:simple_calcs_model2_fixed} and \ref{prop:simple_calcs_model2_random})}

Recall the family of $n \times n$ positive definite matrices $\mathscr{A}$ defined in \eqref{eq:model1} as
\begin{align}
    A_{\alpha,\beta} = \alpha I + \frac{\beta}{n} \ind_n\ind_n^\T \:, \;\; \alpha > 0, \alpha + \beta > 0 \:. \label{eq:appendix:general_family}
\end{align}
We first gather some elementary formulas. By the matrix inversion lemma,
\begin{align}
    A_{\alpha,\beta}^{-1} = \left(\alpha I + \frac{\beta}{n} \ind_n\ind_n^\T\right)^{-1} = \alpha^{-1}I - \frac{\beta/n}{\alpha(\alpha+\beta)} \ind_n\ind_n^\T \:.
\end{align}
Furthermore, let $S \in \R^{n \times p}$ be any column selector matrix with
no duplicate columns. We have again by the matrix inversion lemma
\begin{align}
    (S^\T A_{\alpha,\beta} S)^{-1} = \left(\alpha I + \frac{\beta}{n} \ind_p\ind_p^\T\right)^{-1} = \alpha^{-1}I - \frac{\beta/n}{\alpha(\alpha+\beta p/n)} \ind_p\ind_p^\T \:.
\end{align}
The fact that the right hand side is independent of $S$ is the key property
which makes our calculations possible.
Indeed, we have that
\begin{align}
    S(S^\T A_{\alpha,\beta} S)^{-1} S^\T = \alpha^{-1} SS^\T - \frac{\beta/n}{\alpha(\alpha+\beta p/n)} S\ind_p\ind_p^\T S^\T \:. \label{eq:appendix:term1}
\end{align}
With these formulas in hand, our next proposition gathers
calculations for the case when $S$ represents
uniformly choosing $p$ columns without replacement.

\begin{proposition}
\label{prop:simple_calculations}
Consider the family of $n \times n$ positive definite
matrices $\{A_{\alpha,\beta}\}$ from \eqref{eq:appendix:general_family}.
Fix any integer $p$ such that $1 < p < n$.
Let $S \in \R^{n \times p}$ denote a random column selector matrix
where each column of $S$ is chosen uniformly at random without replacement
from $\{e_1, ..., e_n\}$.
For any $A_{\alpha,\beta}$,
\begin{align}
    \E[ S(S^\T A_{\alpha,\beta} S)^{-1} S^\T A_{\alpha,\beta} ] &= p\frac{(n-1)\alpha + (p-1)\beta}{(n-1)(n\alpha + p\beta)}I + \frac{(n-p)p\beta}{n(n-1)(n\alpha + p\beta)} \ind_n\ind_n^\T \:, \label{eq:appendix:GA} \\
    \E[ S(S^\T A_{\alpha,\beta} S)^{-1} S^\T G_{\alpha,\beta}^{-1} S(S^\T A_{\alpha,\beta} S)^{-1} S^\T ] &= \left(\frac{1}{\alpha} - \frac{(n-p)^2\beta}{(n-1)((n-1)\alpha + (p-1)\beta)(n\alpha + p\beta)} \right) I \nonumber \\
    &\qquad+ \frac{(p-1)\beta(n\alpha(1-2n) + np(\alpha-\beta) + p\beta)}{(n-1)n\alpha((n-1)\alpha + (p-1)\beta)(n\alpha + p\beta)} \ind_n\ind_n^\T \label{eq:appendix:variance_term} \:.
\end{align}
Above, $G_{\alpha,\beta} = \E[ S(S^\T A_{\alpha,\beta} S)^{-1} S^\T ]$.
\end{proposition}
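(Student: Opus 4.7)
The plan is to reduce both identities to linear combinations of $I$ and $\mathbf{1}_n\mathbf{1}_n^\T$, exploiting the rank-one-plus-scalar structure of the family \eqref{eq:appendix:general_family}. Introduce the shorthand $\chi := S\mathbf{1}_p = SS^\T \mathbf{1}_n$, so that $\chi$ is the indicator of the (random) set of $p$ selected coordinates and $\chi\chi^\T = S\mathbf{1}_p\mathbf{1}_p^\T S^\T$. Equation \eqref{eq:appendix:term1} then reads
\begin{align*}
    S(S^\T A_{\alpha,\beta} S)^{-1} S^\T = \alpha^{-1} SS^\T - \frac{\beta/n}{\alpha(\alpha+\beta p/n)} \chi\chi^\T \:,
\end{align*}
which is the workhorse identity for both computations. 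Throughout I will use the three elementary expectations
$\E[SS^\T] = (p/n) I$, $\E[\chi] = (p/n)\mathbf{1}_n$, and
\begin{align*}
    \E[\chi\chi^\T] = \frac{p(n-p)}{n(n-1)} I + \frac{p(p-1)}{n(n-1)} \mathbf{1}_n\mathbf{1}_n^\T \:,
\end{align*}
the last of which follows from $\Pr(i \in \mathrm{supp}(\chi)) = p/n$ and $\Pr(i,j \in \mathrm{supp}(\chi)) = p(p-1)/(n(n-1))$ for $i \neq j$.

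For \eqref{eq:appendix:GA}, I would right-multiply the workhorse identity by $A_{\alpha,\beta} = \alpha I + (\beta/n)\mathbf{1}_n\mathbf{1}_n^\T$, using $SS^\T\mathbf{1}_n = \chi$ and $\chi^\T\mathbf{1}_n = p$. This produces four terms that are each a multiple of $SS^\T$, $\chi\chi^\T$, or $\chi\mathbf{1}_n^\T$. Taking expectations with the three formulas above yields an expression $c_1 I + c_2 \mathbf{1}_n\mathbf{1}_n^\T$; collecting and simplifying the coefficients gives \eqref{eq:appendix:GA}.

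For \eqref{eq:appendix:variance_term}, the plan has three steps. First, apply the same procedure to the workhorse identity itself to obtain $G_{\alpha,\beta} = c_1 I + c_2 \mathbf{1}_n\mathbf{1}_n^\T$, then invert via Sherman--Morrison to get $G_{\alpha,\beta}^{-1} = c_1' I + c_2' \mathbf{1}_n\mathbf{1}_n^\T$. Second, denoting the random matrix $M := S(S^\T A_{\alpha,\beta} S)^{-1} S^\T$, write
\begin{align*}
    M G_{\alpha,\beta}^{-1} M = c_1' M^2 + c_2' (M\mathbf{1}_n)(M\mathbf{1}_n)^\T \:.
\end{align*}
The three algebraic simplifications $(SS^\T)^2 = SS^\T$, $(SS^\T)\chi = \chi$, and $(\chi\chi^\T)^2 = p\chi\chi^\T$ collapse $M^2$ into a linear combination of $SS^\T$ and $\chi\chi^\T$, while $M\mathbf{1}_n$ is a scalar multiple of $\chi$ (again using $SS^\T\mathbf{1}_n = \chi$ and $\chi^\T\mathbf{1}_n = p$), so $(M\mathbf{1}_n)(M\mathbf{1}_n)^\T$ is a scalar multiple of $\chi\chi^\T$. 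Third, take expectations, producing the desired $aI + b\mathbf{1}_n\mathbf{1}_n^\T$ structure, and match coefficients with \eqref{eq:appendix:variance_term}.

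The conceptual content is completely captured by the workhorse identity together with the three elementary expectations; the main obstacle is purely computational, namely tracking the coefficients through the four-term expansion in the first identity and through the $c_1' M^2 + c_2' (M\mathbf{1}_n)(M\mathbf{1}_n)^\T$ expansion in the second. The coefficient of $\mathbf{1}_n\mathbf{1}_n^\T$ in \eqref{eq:appendix:variance_term} in particular requires combining three separate contributions (one from the $SS^\T$ part of $M^2$ via the $\mathbf{1}_n\mathbf{1}_n^\T$ piece of $G_{\alpha,\beta}^{-1}$, one from the $\chi\chi^\T$ part of $M^2$, and one from $(M\mathbf{1}_n)(M\mathbf{1}_n)^\T$) over a common denominator $n\alpha((n-1)\alpha + (p-1)\beta)(n\alpha + p\beta)$, so symbolic bookkeeping (or a computer algebra check) is the prudent way to verify the final form.
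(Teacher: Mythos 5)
Your proposal is correct and follows essentially the same route as the paper's proof: it rests on the same workhorse identity \eqref{eq:appendix:term1}, the same elementary moments of $SS^\T$ and $S\ind_p\ind_p^\T S^\T$, the matrix-inversion-lemma form of $G_{\alpha,\beta}^{-1}$, and an expansion of the quadratic form followed by taking expectations. Your $\chi = S\ind_p$ bookkeeping, with the collapses $(SS^\T)\chi = \chi$ and $(\chi\chi^\T)^2 = p\,\chi\chi^\T$, is just a cleaner packaging of the paper's expectation identities \eqref{eq:appendix:ex3}--\eqref{eq:appendix:ex4}, and the remaining coefficient matching is the same routine algebra the paper also leaves implicit.
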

\begin{proof}
First, we have the following elementary expectation calculations,
\begin{align}
    \E[ SS^\T ] &= \frac{p}{n} I \:, \label{eq:appendix:ex1} \\
    \E[ S \ind_p\ind_p^\T S^\T ] &= \frac{p}{n}\left(1 - \frac{p-1}{n-1}\right) I + \frac{p}{n}\left(\frac{p-1}{n-1}\right) \ind_n\ind_n^\T \:, \label{eq:appendix:ex2} \\
    \E[ SS^\T \ind_n\ind_p^\T S^\T ] &= \E[ S \ind_p\ind_n^\T SS^\T ] = \E[ SS^\T \ind_n\ind_n^\T SS^\T ] = \E[ S \ind_p \ind_p^\T S^\T ] \:, \label{eq:appendix:ex3} \\
    \E[ S \ind_p\ind_p^\T S^\T \ind_n\ind_n^\T S \ind_p\ind_p^\T S^\T ] &= \frac{p^3}{n}\left(1-\frac{p-1}{n-1}\right)I + \frac{p^3}{n} \left(\frac{p-1}{n-1}\right) \ind_n\ind_n^\T \label{eq:appendix:ex4} \:.
\end{align}
To compute $G_{\alpha,\beta}$, we simply
plug \eqref{eq:appendix:ex1} and \eqref{eq:appendix:ex2} into \eqref{eq:appendix:term1}. After simplification,
    \begin{align*}
        G_{\alpha,\beta} = \E[ S(S^\T A_{\alpha,\beta} S)^{-1} S^\T ] = \frac{p}{\alpha n}\left(1 - \frac{\beta/n}{\alpha + \beta p/n}\left(1 - \frac{p-1}{n-1}\right)\right) I - \frac{p}{n}\frac{p-1}{n-1} \frac{\beta/n}{\alpha(\alpha + \beta p/n)} \ind_n\ind_n^\T \:.
    \end{align*}
From this formula for $G_{\alpha,\beta}$, \eqref{eq:appendix:GA} follows immediately.

Our next goal is to compute $\E[ S(S^\T A_{\alpha,\beta} S)^{-1} S^\T G_{\alpha,\beta}^{-1} S(S^\T A_{\alpha,\beta} S)^{-1} S^\T ]$. To do this, we first invert $G_{\alpha,\beta}$.
Applying the matrix inversion lemma, we can write down a formula for the inverse of $G_{\alpha,\beta}$,
    \begin{align}
        G_{\alpha,\beta}^{-1} = \underbrace{\frac{(n-1)\alpha(n\alpha + p\beta)}{(n-1)p\alpha + (p-1)p\beta}}_{\gamma} I + \underbrace{\frac{(p-1)\beta(n\alpha + p\beta)}{np((n-1)\alpha + (p-1)\beta)}}_{\eta}\ind_n\ind_n^\T \:. \label{eq:appendix:ginv}
    \end{align}
Next, we note for any $r, q$,
using the properties that $S^\T S = I$, $\ind_n^\T S \ind_p = p$, and $\ind_p^\T \ind_p = p$, we have that
\begin{align*}
    &(r SS^\T + q S\ind_p\ind_p^\T S^\T) (\gamma I + \eta \ind_n\ind_n^\T)(r SS^\T + q S\ind_p\ind_p^\T S^\T) \\
    &\qquad= \gamma r^2 SS^\T + 2r\gamma q S \ind_p\ind_p^\T S^\T + \eta r^2 SS^\T \ind_n\ind_n^\T SS^\T \\
    &\qquad\qquad+ p r \eta q (SS^\T \ind_n\ind_p^\T S^\T + S \ind_p\ind_n^\T SS^\T ) + pq^2\gamma S \ind_p\ind_p^\T S^\T \\
    &\qquad\qquad+\eta q^2 S \ind_p\ind_p^\T S^\T \ind_n\ind_n^\T S \ind_p\ind_p^\T S^\T \:.
\end{align*}
Taking expectations of both sides of the above equation and using the formulas in
\eqref{eq:appendix:ex1}, \eqref{eq:appendix:ex2}, \eqref{eq:appendix:ex3},
and \eqref{eq:appendix:ex4},
\begin{align*}
    &\E[ (r SS^\T + q S\ind_p\ind_p^\T S^\T) (\gamma I + \eta \ind_n\ind_n^\T)(r SS^\T + q S\ind_p\ind_p^\T S^\T) ] \\
    &\qquad= \frac{p(p(n-p)q^2 + 2(n-p)qr + (n-1)r^2)\gamma + p(n-p)(pq+r)^2\eta}{n(n-1)} I \\
    &\qquad\qquad+ \frac{p(p-1)( q(pq+2r)\gamma + (pq+r)^2\eta)}{n(n-1)} \ind_n\ind_n^\T \:.
\end{align*}
We now set $r = \alpha^{-1}$, $q = -\frac{\beta/n}{\alpha(\alpha+\beta p/n)}$,
and $\gamma, \eta$ from \eqref{eq:appendix:ginv} to reach the desired formula
for \eqref{eq:appendix:variance_term}.
\end{proof}
Proposition~\ref{prop:simple_calcs_model2_random}
follows immediately from Proposition~\ref{prop:simple_calculations} by plugging
in $\alpha = 1$ into \eqref{eq:appendix:GA}.
We next consider how \eqref{eq:appendix:term1} behaves under
a fixed partition of $\{1, ..., n\}$.
Recall our assumption on partitions:
$n = pk$ for some integer $k \geq 1$,
and we sequentially partition $\{1, ..., n\}$ into $k$ partitions of size $p$,
i.e. $J_1 = \{1, ..., p\}$, $J_2 = \{p+1, ..., 2p\}$, and so on.
Define $S_1, ..., S_k \in \R^{n \times p}$ such that $S_i$ is the column
selector matrix for the partition $J_i$, and $S$ uniformly chooses $S_i$ with
probability $1/k$.

\begin{proposition}
\label{prop:appendix:fixed_partition_calc}
Consider the family of $n \times n$ positive definite
matrices $\{A_{\alpha,\beta}\}$ from \eqref{eq:appendix:general_family}, and let
$n$, $p$, and $S$ be described as in the preceding paragraph. We have that
    \begin{align}
        \E[ S(S^\T A_{\alpha,\beta} S)^{-1} S^\T A_{\alpha,\beta} ] = \frac{p}{n} I + \frac{p\beta}{n^2\alpha + np\beta}\ind_n\ind_n^\T - \frac{p\beta}{n^2\alpha + np\beta}\mathrm{blkdiag}(\underbrace{\ind_p\ind_p^\T, ..., \ind_p\ind_p^\T}_{k \text{ times}}) \:. \label{eq:appendix:GS_partition}
    \end{align}
\end{proposition}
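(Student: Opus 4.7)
The plan is to start from the closed form \eqref{eq:appendix:term1} for
$S(S^\T A_{\alpha,\beta} S)^{-1} S^\T$, multiply it on the right by
$A_{\alpha,\beta} = \alpha I + \frac{\beta}{n}\ind_n\ind_n^\T$, and then take
the expectation term by term. Expanding the product yields four summands:
$SS^\T$, $\frac{\beta}{n\alpha}SS^\T\ind_n\ind_n^\T$,
$-\frac{\beta/n}{\alpha+\beta p/n}S\ind_p\ind_p^\T S^\T$, and
$-\frac{\beta^2/n^2}{\alpha(\alpha+\beta p/n)}S\ind_p\ind_p^\T S^\T\ind_n\ind_n^\T$.
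So the computation reduces to evaluating the expectations of these four
random matrices under the fixed partition sampling of $S$.

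The key elementary observation I would use is that for every partition
block, $S_i^\T \ind_n = \ind_p$, because $S_i$ selects the rows indexed by
$J_i$. Together with $S_i S_i^\T$ being the diagonal projector onto $J_i$
and $S_i \ind_p = \sum_{j\in J_i} e_j$, this gives the four building blocks
$\E[SS^\T] = \frac{1}{k}\sum_i S_i S_i^\T = \frac{p}{n}I$,
$\E[S\ind_p\ind_p^\T S^\T] = \frac{p}{n}\mathrm{blkdiag}(\ind_p\ind_p^\T,\ldots,\ind_p\ind_p^\T)$,
$\E[SS^\T\ind_n\ind_n^\T] = \frac{1}{k}\sum_i S_i\ind_p\ind_n^\T = \frac{p}{n}\ind_n\ind_n^\T$,
and finally, using $\ind_p^\T S^\T\ind_n = p$,
$\E[S\ind_p\ind_p^\T S^\T\ind_n\ind_n^\T] = p\,\E[S\ind_p\ind_n^\T] = \frac{p^2}{n}\ind_n\ind_n^\T$.

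Substituting these four expectations into the expansion, the $I$ coefficient
is $\frac{p}{n}$, the $\mathrm{blkdiag}$ coefficient is
$-\frac{\beta/n}{\alpha+\beta p/n}\cdot\frac{p}{n} = -\frac{p\beta}{n^2\alpha+np\beta}$,
and the $\ind_n\ind_n^\T$ coefficient simplifies as
\begin{align*}
\frac{p\beta}{n^2\alpha} - \frac{p^2\beta^2/n^3}{\alpha(\alpha+\beta p/n)}
= \frac{p\beta}{n^2\alpha}\cdot\frac{\alpha}{\alpha+\beta p/n}
= \frac{p\beta}{n^2\alpha + np\beta},
\end{align*}
yielding exactly \eqref{eq:appendix:GS_partition}. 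No step is technically
hard; the only mild subtlety is keeping track of the identity
$S_i^\T\ind_n = \ind_p$ which collapses the cross terms involving
$\ind_n\ind_n^\T$ to a multiple of $\ind_n\ind_n^\T$ rather than to a
block-diagonal object, which is why the off-partition structure of
fixed partitioning survives only through the $\mathrm{blkdiag}$ term.
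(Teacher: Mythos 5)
Your proposal is correct and follows essentially the same route as the paper: both start from the closed form \eqref{eq:appendix:term1} and reduce everything to the fixed-partition expectations $\E[SS^\T]=\frac{p}{n}I$ and $\E[S\ind_p\ind_p^\T S^\T]=\frac{p}{n}\mathrm{blkdiag}(\ind_p\ind_p^\T,\ldots,\ind_p\ind_p^\T)$, with the collapse of the cross terms via $S_i^\T\ind_n=\ind_p$ (equivalently, the paper's identity $\mathrm{blkdiag}(\ind_p\ind_p^\T,\ldots,\ind_p\ind_p^\T)\ind_n\ind_n^\T=p\,\ind_n\ind_n^\T$). The only cosmetic difference is that you multiply by $A_{\alpha,\beta}$ before averaging while the paper averages first and then multiplies; the algebra and the final coefficients agree.
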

\begin{proof}
Once again, the expectation calculations are
\begin{align*}
    \E[ SS^\T ] = \frac{p}{n} I, \;\; \E[ S \ind_p\ind_p^\T S^\T ] = \frac{p}{n}\mathrm{blkdiag}(\underbrace{\ind_p\ind_p^\T, ..., \ind_p\ind_p^\T}_{k \textrm{ times}}) \:.
\end{align*}
Therefore,
\begin{align*}
    \E[ S(S^\T A_{\alpha,\beta} S)^{-1} S^\T ] = \frac{p}{\alpha n} I - \frac{p}{n} \frac{\beta/n}{\alpha(\alpha+\beta p/n)} \mathrm{blkdiag}(\ind_p\ind_p^\T, ..., \ind_p\ind_p^\T) \:.
\end{align*}
Furthermore,
\begin{align*}
    \mathrm{blkdiag}(\ind_p\ind_p^\T, ..., \ind_p\ind_p^\T) \ind_n\ind_n^\T = \ind_n\ind_n^\T\mathrm{blkdiag}(\ind_p\ind_p^\T, ..., \ind_p\ind_p^\T) = p \ind_n\ind_n^\T \:,
\end{align*}
Hence, the formula for
$\E[ S(S^\T A_{\alpha,\beta} S)^{-1} S^\T A_{\alpha,\beta} ]$ follows.
\end{proof}

We now make the following observation. Let $Q_1, ..., Q_k$ be any partition
of $\{1, ..., n\}$ into $k$ partitions of size $p$.
Let $\E_{S \sim Q_i}$ denote expectation with respect to $S$ uniformly chosen as column selectors among $Q_1, ..., Q_k$,
and let $\E_{S \sim J_i}$ denote expectation with respect to the $S$
in the setting of Proposition~\ref{prop:appendix:fixed_partition_calc}. It is not hard to see
there exists a permutation matrix $\Pi$ such that
\begin{align*}
    \Pi^\T \E_{S \sim Q_i}[ S(S^\T A_{\alpha,\beta} S)^{-1} S^\T ] \Pi  = \E_{S \sim J_i}[ S(S^\T A_{\alpha,\beta} S)^{-1} S^\T ] \:.
\end{align*}
Using this permutation matrix $\Pi$,
\begin{align*}
    \lambda_{\min}( \E_{S \sim Q_i}[ P_{A^{1/2}_{\alpha,\beta} S} ] ) &= \lambda_{\min} ( \E_{S \sim Q_i}[ S(S^\T A_{\alpha,\beta} S)^{-1} S^\T ] A_{\alpha,\beta} ) \\
    &= \lambda_{\min} ( \E_{S \sim Q_i}[ S(S^\T A_{\alpha,\beta} S)^{-1} S^\T ] \Pi A_{\alpha,\beta} \Pi^\T ) \\
    &= \lambda_{\min} ( \Pi^\T \E_{S \sim Q_i}[ S(S^\T A_{\alpha,\beta} S)^{-1} S^\T ] \Pi A_{\alpha,\beta} ) \\
    &= \lambda_{\min} (\E_{S \sim J_i}[ S(S^\T A_{\alpha,\beta} S)^{-1} S^\T ] A_{\alpha,\beta}) \\
    &= \lambda_{\min} ( \E_{S \sim J_i}[ P_{A^{1/2}_{\alpha,\beta} S} ] ) \:.
\end{align*}
Above, the second equality holds because $A_{\alpha,\beta}$ is invariant under a similarity transform by any permutation matrix.
Therefore, Proposition~\ref{prop:appendix:fixed_partition_calc} yields the
$\mu_{\mathrm{part}}$ value for every partition $Q_1, ..., Q_k$.
The claim of Proposition~\ref{prop:simple_calcs_model2_fixed} now follows
by substituting $\alpha=1$ into \eqref{eq:appendix:GS_partition}.

\subsection{Proof of Proposition~\ref{prop:lower_bound_gs}}
Define $e_k = x_k - x_*$, $H_k = S_k(S_k^\T A S_k)^{\dag} S_k^\T$ and $G = \E[ H_k ]$.
From the update rule \eqref{eq:rand_gs},
\begin{align*}
    e_{k+1} = (I - H_k A) e_k \Longrightarrow A^{1/2} e_{k+1} = (I - A^{1/2} H_k A^{1/2}) A^{1/2} e_k \:.
\end{align*}
Taking and iterating expectations,
\begin{align*}
    \E[A^{1/2} e_{k+1}] = (I - A^{1/2} G A^{1/2}) \E[A^{1/2} e_k] \:.
\end{align*}
Unrolling this recursion yields for all $k \geq 0$,
\begin{align*}
    \E[A^{1/2} e_k] = (I - A^{1/2} G A^{1/2})^k A^{1/2} e_0 \:.
\end{align*}
Choose $A^{1/2} e_0 = v$, where $v$ is an eigenvector of $I - A^{1/2} G A^{1/2}$ with eigenvalue
$\lambda_{\max}(I - A^{1/2} G A^{1/2}) = 1 - \lambda_{\min}(G A) = 1 - \mu$.
Now by Jensen's inequality,
\begin{align*}
    \E[\norm{e_k}_{A}] = \E[\twonorm{A^{1/2} e_k}] \geq \twonorm{\E[A^{1/2} e_k]} = (1-\mu)^{k} \norm{e_0}_{A} \:.
\end{align*}
This establishes the claim.

\section{Proofs for Convergence Results (Section~\ref{sec:results:convergence})}
\label{sec:appendix:framework}

We now state our main structural result for accelerated coordinate descent.
Let $\Pr$ be a probability measure on $\Omega = \mathcal{S}^{n \times n} \times
\R_+ \times \R_+$, with $\mathcal{S}^{n \times n}$ denoting $n \times n$
positive semi-definite matrices and $\R_+$ denoting positive reals.  Write
$\omega \in \Omega$ as the tuple $\omega = (H, \Gamma, \gamma)$, and let $\E$ denote
expectation with respect to $\Pr$.
Suppose that $G = \E[ \frac{1}{\gamma} H ]$ exists and is positive definite.

Now suppose that $f : \R^{n} \longrightarrow \R$ is a differentiable and strongly convex
function, and put $f_* = \min_{x} f(x)$, with $x_*$ attaining the minimum value.
Suppose that
$f$ is both $\mu$-strongly convex and has $L$-Lipschitz gradients with respect to
the $G^{-1}$ norm.
This means that for all $x,y \in \R^{n}$, we have
\begin{subequations}
\begin{align}
    f(y) &\geq f(x) + \ip{\nabla f(x)}{y - x} + \frac{\mu}{2} \norm{y-x}_{G^{-1}}^2 \:, \label{eq:mu_strong_cvx} \\
    f(y) &\leq f(x) + \ip{\nabla f(x)}{y - x} + \frac{L}{2} \norm{y-x}_{G^{-1}}^2 \label{eq:L_lipschitz} \:.
\end{align}
\end{subequations}

We now define a random sequence as follows.
Let $\omega_0 = (H_0, \Gamma_0, \gamma_0), \omega_1 = (H_1, \Gamma_1, \gamma_1), ...$
be independent realizations from $\Pr$.
Starting from $y_0=z_0=x_0$ with $x_0$ fixed, consider the sequence
$\{(x_k,y_k,z_k)\}_{k \geq 0}$ defined by the recurrence
\begin{subequations}
    \label{eq:new_update_rule}
    \begin{align}
        \tau(x_{k+1} - z_k) &= y_k - x_{k+1} \:, \label{eq:acc:coupling} \\
        y_{k+1} &= x_{k+1} - \frac{1}{\Gamma_k} H_k \nabla f(x_{k+1}) \:, \label{eq:acc:gradient_step} \\
        z_{k+1} - z_k &= \tau \left( x_{k+1} - z_k - \frac{1}{\mu \gamma_k} H_k \nabla f(x_{k+1}) \right) \:. \label{eq:acc:zseq}
    \end{align}
\end{subequations}
It is easily verified that $(x, y, z) = (x_*, x_*, x_*)$ is a fixed point
of the aforementioned dynamical system.
Our goal for now is to describe conditions on $f$, $\mu$, and $\tau$ such that
the sequence of updates  \eqref{eq:acc:coupling}, \eqref{eq:acc:gradient_step}, and \eqref{eq:acc:zseq}
converges to this fixed point. As described in Wilson et al.~\cite{wilson16}, our main strategy for
proving convergence will be to introduce the following Lyapunov function
\begin{align}
    V_k = f(y_k) - f_* + \frac{\mu}{2} \norm{z_k - x_*}_{G^{-1}}^2 \:, \label{eq:lyapunov_function}
\end{align}
and show that $V_k$ decreases along every trajectory.
We let $\E_k$ denote the expectation conditioned on $\mathcal{F}_k = \sigma(\omega_0, \omega_1, ..., \omega_{k-1})$.
Observe that $x_{k+1}$ is $\mathcal{F}_k$-measurable, a fact we will use repeatedly throughout our
calculations.
With the preceding definitions in place, we state and prove our main structural theorem.
\begin{theorem}
\label{thm:main_structural}
(Generalization of Theorem~\ref{thm:general_simplified}.)
Let $f$ and $G$ be as defined above,
with $f$ satisfying $\mu$-strongly convexity and $L$-Lipschitz gradients with respect to the $\norm{\cdot}_{G^{-1}}$ norm,
as defined in \eqref{eq:mu_strong_cvx} and \eqref{eq:L_lipschitz}.
Suppose that for all fixed $x \in \R^{n}$, we have that the following holds for almost every $\omega \in \Omega$,
\begin{align}
    f(\Phi(x;\omega)) \leq f(x) - \frac{1}{2\Gamma} \norm{\nabla f(x)}^2_{H} \:, \:\: \Phi(x;\omega) = x - \frac{1}{\Gamma} H \nabla f(x) \:. \label{eq:gradient_step_progress}
\end{align}
Furthermore, suppose that $\nu > 0$ satisfies
\begin{align}
    \E\left[ \frac{1}{\gamma^2} H G^{-1} H \right] \preccurlyeq \nu \E \left[ \frac{1}{\gamma^2} H \right] \:. \label{eq:nu_parameter}
\end{align}
Then as long as we set $\tau > 0$ such that $\tau$ satisfies for almost every $\omega \in \Omega$,
\begin{align}
    \tau \leq \frac{\gamma}{\sqrt{\Gamma}} \sqrt{\frac{\mu}{\nu}} \:, \:\: \tau \leq \sqrt{\frac{\mu}{L}} \:, \label{eq:tau_requirements}
\end{align}
we have that $V_k$ defined in \eqref{eq:lyapunov_function}
satisfies for all $k \geq 0$,
\begin{align}
    \E_k [V_{k+1}] \leq (1-\tau) V_k \:. \label{eq:lyapunov_recurrence}
\end{align}
\end{theorem}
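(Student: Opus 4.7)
The plan is to estimate $\E_k[V_{k+1}]$ by separately controlling the function-value summand $\E_k[f(y_{k+1}) - f_*]$ and the distance-squared summand $\frac{\mu}{2}\E_k[\norm{z_{k+1} - x_*}^2_{G^{-1}}]$, following the Lyapunov strategy of Wilson et al.~\cite{wilson16}. For the former, the progress hypothesis \eqref{eq:gradient_step_progress} applied pointwise in $\omega_k$ (and using that $x_{k+1}$ is $\mathcal{F}_k$-measurable) immediately yields $\E_k[f(y_{k+1}) - f_*] \leq f(x_{k+1}) - f_* - \frac{1}{2}\E_k[\frac{1}{\Gamma_k}\norm{\nabla f(x_{k+1})}^2_{H_k}]$. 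The bulk of the work is in the distance-squared summand, which I will expand by writing $z_{k+1} - x_* = a_k - b_k$ with $a_k := (1-\tau)(z_k - x_*) + \tau(x_{k+1} - x_*)$ (which is $\mathcal{F}_k$-measurable) and $b_k := \frac{\tau}{\mu\gamma_k} H_k \nabla f(x_{k+1})$.

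Expanding $\norm{a_k - b_k}^2_{G^{-1}}$ and using $\E_k[H_k/\gamma_k] = G$ collapses the cross term into $-\tau\ip{a_k}{\nabla f(x_{k+1})}$ (after absorbing the $\mu/2$ factor in $V_{k+1}$). Meanwhile, $\norm{a_k}^2_{G^{-1}}$ is upper bounded by the convex combination $(1-\tau)\norm{z_k - x_*}^2_{G^{-1}} + \tau \norm{x_{k+1} - x_*}^2_{G^{-1}}$, which requires $\tau \in [0,1]$; this is secured by $\tau \leq \sqrt{\mu/L}$ together with $\mu \leq L$. The crucial algebraic move is to eliminate $z_k$ inside $a_k$ using the coupling \eqref{eq:acc:coupling}, giving $a_k = \tau^{-1} x_{k+1} - \tau^{-1}(1-\tau) y_k - x_*$, so that $-\tau\ip{a_k}{\nabla f(x_{k+1})}$ splits as $-(1-\tau)\ip{x_{k+1} - y_k}{\nabla f(x_{k+1})} - \tau\ip{x_{k+1} - x_*}{\nabla f(x_{k+1})}$. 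Applying convexity of $f$ to the first inner product and $\mu$-strong convexity in the $\norm{\cdot}_{G^{-1}}$ norm to the second produces $-(1-\tau)(f(x_{k+1}) - f(y_k)) - \tau(f(x_{k+1}) - f_*) - \frac{\tau\mu}{2}\norm{x_{k+1}-x_*}^2_{G^{-1}}$; the last quadratic term exactly cancels the matching $\frac{\tau\mu}{2}\norm{x_{k+1}-x_*}^2_{G^{-1}}$ from the convex combination bound. A short bookkeeping step then combines the remaining function-value contributions with $f(x_{k+1}) - f_*$ from the gradient-step bound, cancelling the $f(x_{k+1})$ terms and leaving $(1-\tau)(f(y_k) - f_*) + (1-\tau)\frac{\mu}{2}\norm{z_k - x_*}^2_{G^{-1}} = (1-\tau)V_k$.

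The main obstacle is controlling the leftover ``variance'' residual $\frac{\mu}{2}\E_k[\norm{b_k}^2_{G^{-1}}] - \frac{1}{2}\E_k[\frac{1}{\Gamma_k}\norm{\nabla f(x_{k+1})}^2_{H_k}]$, which is exactly where the variance hypothesis \eqref{eq:nu_parameter} and the first step-size constraint in \eqref{eq:tau_requirements} come into play. I will first apply \eqref{eq:nu_parameter} to obtain $\E_k[\norm{b_k}^2_{G^{-1}}] \leq \frac{\tau^2 \nu}{\mu^2}\nabla f(x_{k+1})^\T \E_k[\frac{1}{\gamma_k^2}H_k]\nabla f(x_{k+1})$, and then appeal to the almost-sure pointwise inequality $\frac{\tau^2 \nu}{\mu\gamma_k^2} \leq \frac{1}{\Gamma_k}$ (equivalent to $\tau \leq \frac{\gamma_k}{\sqrt{\Gamma_k}}\sqrt{\mu/\nu}$) to dominate the resulting quadratic form by $\frac{1}{2}\E_k[\frac{1}{\Gamma_k}\norm{\nabla f(x_{k+1})}^2_{H_k}]$. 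This renders the variance residual nonpositive, and combining with the earlier accounting yields $\E_k[V_{k+1}] \leq (1-\tau)V_k$.
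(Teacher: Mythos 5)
Your proposal is correct, and it reaches \eqref{eq:lyapunov_recurrence} by a somewhat different algebraic route than the paper, though within the same Lyapunov framework. The treatment of the variance term is identical to the paper's: you bound $\frac{\mu}{2}\E_k\norm{b_k}^2_{G^{-1}}$ by \eqref{eq:nu_parameter} and then invoke the almost-sure inequality $\frac{\tau^2\nu}{2\mu\gamma^2}\leq\frac{1}{2\Gamma}$ from \eqref{eq:tau_requirements}, which is exactly the paper's step \eqref{eq:nu_inequality}. Where you diverge is the handling of the distance term: the paper expands $V_{k+1}-V_k$ exactly via the two-point identity \eqref{eq:two_point_eq}, applies $\mu$-strong convexity twice (at the pair $(y_k,x_{k+1})$ and at $(x_*,x_{k+1})$), and is then forced to absorb the resulting $\ip{\nabla f(x_{k+1})}{x_{k+1}-y_k}$ term with the $L$-Lipschitz inequality \eqref{eq:L_lipschitz}, finishing with the leftover $\left(\frac{\tau L}{2}-\frac{\mu}{2\tau}\right)\norm{y_k-x_{k+1}}^2_{G^{-1}}\leq 0$, which is precisely where $\tau\leq\sqrt{\mu/L}$ enters. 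You instead write $z_{k+1}-x_*=a_k-b_k$ with $a_k=(1-\tau)(z_k-x_*)+\tau(x_{k+1}-x_*)$ (correct from \eqref{eq:acc:zseq}), bound $\norm{a_k}^2_{G^{-1}}$ by Jensen's inequality on the convex combination (needing only $\tau\in[0,1]$), collapse the cross term using $\E_k[H_k/\gamma_k]=G$ and $\mathcal{F}_k$-measurability of $x_{k+1}$, and then use the coupling \eqref{eq:acc:coupling} to rewrite $a_k=\tau^{-1}x_{k+1}-\tau^{-1}(1-\tau)y_k-x_*$ (this identity checks out), after which plain convexity handles the $y_k$ direction and strong convexity is used only at $x_*$. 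The bookkeeping you sketch does close: the function values telescope to $(1-\tau)(f(y_k)-f_*)$ and the $\frac{\tau\mu}{2}\norm{x_{k+1}-x_*}^2_{G^{-1}}$ terms cancel. A consequence of your route is that the smoothness constant $L$ is used only to guarantee $\tau\leq\sqrt{\mu/L}\leq 1$ (valid since \eqref{eq:mu_strong_cvx} and \eqref{eq:L_lipschitz} force $\mu\leq L$), so your argument in fact establishes the contraction under the nominally weaker requirement $\tau\leq 1$ in place of $\tau\leq\sqrt{\mu/L}$, whereas the paper's exact expansion keeps every error term explicit at the cost of invoking the Lipschitz inequality. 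Both proofs are valid and of comparable length; yours is slightly leaner and isolates more clearly which hypothesis drives which constraint on $\tau$.
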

\begin{proof}
First, recall the following two point equality valid for
any vectors $a, b, c \in V$ in a real inner product space $V$,
\begin{align}
    \norm{a-b}^2_{V} - \norm{c-b}^2_{V} = \norm{a-c}^2_{V} + 2\ip{a-c}{c-b}_{V} \:. \label{eq:two_point_eq}
\end{align}
Now we can proceed with our analysis,
\begin{subequations}
\begin{align}
    V_{k+1} - V_k \,\,\,&\eqrefstackrel{eq:two_point_eq}{=}\,\,\, f(y_{k+1}) - f(y_k) - \mu\ip{z_{k+1}-z_k}{x_* - z_k}_{G^{-1}} + \frac{\mu}{2}\norm{z_{k+1}-z_k}^2_{G^{-1}} \notag\\
    &= \,\,\, f(y_{k+1}) - f(x_{k+1}) + f(x_{k+1}) - f(y_k) - \mu\ip{z_{k+1}-z_k}{x_* - z_k}_{G^{-1}} + \frac{\mu}{2}\norm{z_{k+1}-z_k}^2_{G^{-1}} \notag\\
    &\eqrefstackrel{eq:mu_strong_cvx}{\leq} \,\,\, f(y_{k+1}) - f(x_{k+1}) + \ip{\nabla f(x_{k+1})}{x_{k+1}-y_k} - \frac{\mu}{2} \norm{x_{k+1} - y_k}^2_{G^{-1}}  \notag\\
        &\qquad- \mu\ip{z_{k+1}-z_k}{x_* - z_k}_{G^{-1}} + \frac{\mu}{2}\norm{z_{k+1}-z_k}^2_{G^{-1}} \label{eq:note:main:1} \\
    &\eqrefstackrel{eq:acc:zseq}{=} \,\,\, f(y_{k+1}) - f(x_{k+1}) + \ip{\nabla f(x_{k+1})}{x_{k+1}-y_k} - \frac{\mu}{2} \norm{x_{k+1} - y_k}^2_{G^{-1}}  \notag\\
        &\qquad+ \tau\ip{ \frac{1}{\gamma_k} H_k \nabla f(x_{k+1}) - \mu(x_{k+1} - z_k) }{x_* - z_k}_{G^{-1}} + \frac{\mu}{2}\norm{z_{k+1}-z_k}^2_{G^{-1}} \label{eq:note:main:2} \\
    &= \,\,\, f(y_{k+1}) - f(x_{k+1}) + \ip{\nabla f(x_{k+1})}{x_{k+1}-y_k} - \frac{\mu}{2} \norm{x_{k+1} - y_k}^2_{G^{-1}}  \notag\\
        &\qquad+ \tau\ip{ \frac{1}{\gamma_k} H_k \nabla f(x_{k+1}) }{x_* - x_{k+1}}_{G^{-1}}  + \tau \ip{ \frac{1}{\gamma_k} H_k \nabla f(x_{k+1}) }{x_{k+1} - z_k}_{G^{-1}} \notag\\
        &\qquad- \tau\mu\ip{ x_{k+1} - z_k }{x_* - z_k}_{G^{-1}} + \frac{\mu}{2}\norm{z_{k+1}-z_k}^2_{G^{-1}} \notag\\
    &\eqrefstackrel{eq:acc:zseq}{=} \,\,\, f(y_{k+1}) - f(x_{k+1}) + \ip{\nabla f(x_{k+1})}{x_{k+1}-y_k} - \frac{\mu}{2} \norm{x_{k+1} - y_k}^2_{G^{-1}}  \notag\\
        &\qquad+ \tau\ip{ \frac{1}{\gamma_k} H_k \nabla f(x_{k+1}) }{x_* - x_{k+1}}_{G^{-1}}  + \tau \ip{ \frac{1}{\gamma_k} H_k \nabla f(x_{k+1}) }{x_{k+1} - z_k}_{G^{-1}} \notag \\
        &\qquad- \tau\mu\ip{ x_{k+1} - z_k }{x_* - z_k}_{G^{-1}} + \frac{\mu}{2} \norm{\tau (x_{k+1}-z_k)}^2_{G^{-1}} + \frac{\tau^2}{2\mu \gamma_k^2} \norm{H_k \nabla f(x_{k+1})}^2_{G^{-1}} \notag \\
        &\qquad- \tau\ip{x_{k+1}-z_k}{ \tau \frac{1}{\gamma_k} H_k \nabla f(x_{k+1})}_{G^{-1}} \label{eq:note:main:3} \\
    &\eqrefstackrel{eq:gradient_step_progress}{\leq} \,\,\, - \frac{1}{2\Gamma_k} \norm{\nabla f(x_{k+1})}^2_{H_k} + \ip{\nabla f(x_{k+1})}{x_{k+1}-y_k} - \frac{\mu}{2} \norm{x_{k+1} - y_k}^2_{G^{-1}}  \notag\\
        &\qquad+ \tau\ip{ \frac{1}{\gamma_k} H_k \nabla f(x_{k+1}) }{x_* - x_{k+1}}_{G^{-1}}  + \tau \ip{ \frac{1}{\gamma_k} H_k \nabla f(x_{k+1}) }{x_{k+1} - z_k}_{G^{-1}} \notag \\
        &\qquad- \tau\mu\ip{ x_{k+1} - z_k }{x_* - z_k}_{G^{-1}} + \frac{\mu}{2} \norm{\tau (x_{k+1}-z_k)}^2_{G^{-1}} + \frac{\tau^2}{2\mu \gamma_k^2} \norm{H_k \nabla f(x_{k+1})}^2_{G^{-1}} \notag \\
        &\qquad- \tau\ip{x_{k+1}-z_k}{ \tau \frac{1}{\gamma_k} H_k \nabla f(x_{k+1})}_{G^{-1}} \label{eq:note:main:4} \:.
\end{align}
\end{subequations}
Above, \eqref{eq:note:main:1} follows from $\mu$-strong convexity,
\eqref{eq:note:main:2} and \eqref{eq:note:main:3} both use the definition of the update sequence
given in \eqref{eq:new_update_rule},
and \eqref{eq:note:main:4} follows using the gradient inequality assumption
\eqref{eq:gradient_step_progress}.
Now letting $x \in \R^{n}$ be fixed, we observe that
\begin{align}
    \E\left[ \frac{\tau^2}{2\mu \gamma^2} \nabla f(x)^\T H G^{-1} H \nabla f(x) - \frac{1}{2\Gamma} \norm{\nabla f(x)}_{H}^2 \right] \,\,\,&\eqrefstackrel{eq:nu_parameter}{\leq}\,\,\, \E\left[\left( \frac{\tau^2 \nu}{2\mu \gamma^2} - \frac{1}{2\Gamma}\right) \norm{\nabla f(x)}_{H}^2 \right] \notag\\
    &\eqrefstackrel{eq:tau_requirements}{\leq} 0 \:. \label{eq:nu_inequality}
\end{align}
The first inequality uses the assumption on $\nu$,
and the second inequality uses the requirement that
$\tau \leq \frac{\gamma}{\sqrt{\Gamma}} \sqrt{\frac{\mu}{\nu}}$.
Now taking expectations with respect to $\E_k$,
\begin{subequations}
\begin{align}
    \E_k [V_{k+1}] - V_k &\leq \,\,\, \E_k\left[ \frac{\tau^2}{2\mu \gamma_k^2} \nabla f(x_{k+1})^\T H_k G^{-1} H_k \nabla f(x_{k+1}) - \frac{1}{2\Gamma_k} \norm{\nabla f(x_{k+1})}_{H_k}^2 \right] \notag\\
        &\qquad+ \ip{\nabla f(x_{k+1})}{x_{k+1}-y_k} - \frac{\mu}{2} \norm{x_{k+1} - y_k}^2_{G^{-1}}  \notag\\
        &\qquad+ \tau\ip{\nabla f(x_{k+1})}{x_* - x_{k+1}} + \tau \ip{\nabla f(x_{k+1})}{x_{k+1} - z_k} - \tau\mu\ip{ x_{k+1} - z_k }{x_* - z_k}_{G^{-1}} \notag\\
        &\qquad+\frac{\mu}{2} \norm{\tau (x_{k+1}-z_k)}^2_{G^{-1}} - \tau\ip{x_{k+1}-z_k}{\tau \nabla f(x_{k+1})} \notag\\
    &\eqrefstackrel{eq:nu_inequality}{\leq} \,\,\, \ip{\nabla f(x_{k+1})}{x_{k+1}-y_k} - \frac{\mu}{2} \norm{x_{k+1} - y_k}^2_{G^{-1}} + \tau\ip{\nabla f(x_{k+1})}{x_* - x_{k+1}} \notag\\
        &\qquad+ \tau \ip{\nabla f(x_{k+1})}{x_{k+1} - z_k} - \tau\mu\ip{ x_{k+1} - z_k }{x_* - z_k}_{G^{-1}} \notag\\
        &\qquad+\frac{\mu}{2} \norm{\tau (x_{k+1}-z_k)}^2_{G^{-1}} - \tau\ip{x_{k+1}-z_k}{\tau \nabla f(x_{k+1})} \notag\\
    &\eqrefstackrel{eq:mu_strong_cvx}{\leq} \,\,\, -\tau\left( f(x_{k+1}) - f_* + \frac{\mu}{2} \norm{x_{k+1} - x_*}^2_{G^{-1}} \right) + \ip{\nabla f(x_{k+1})}{x_{k+1}-y_k} - \frac{\mu}{2} \norm{x_{k+1} - y_k}^2_{G^{-1}} \notag\\
        &\qquad+ \tau \ip{\nabla f(x_{k+1})}{x_{k+1} - z_k} - \tau\mu\ip{ x_{k+1} - z_k }{x_* - z_k}_{G^{-1}} \notag\\
        &\qquad+\frac{\mu}{2} \norm{\tau (x_{k+1}-z_k)}^2_{G^{-1}} - \tau\ip{x_{k+1}-z_k}{\tau \nabla f(x_{k+1})} \label{eq:note:main:5}\\
    &\eqrefstackrel{eq:acc:coupling}{=} \,\,\, - \tau\left( f(x_{k+1}) - f_* + \frac{\mu}{2} \norm{x_{k+1} - x_*}^2_{G^{-1}} \right) - \frac{\mu}{2} \norm{x_{k+1} - y_k}^2_{G^{-1}} \notag\\
        &\qquad- \tau\mu\ip{ x_{k+1} - z_k }{x_* - z_k}_{G^{-1}} \notag\\
        &\qquad+\frac{\mu}{2} \norm{\tau (x_{k+1}-z_k)}^2_{G^{-1}} - \tau\ip{y_k - x_{k+1}}{\nabla f(x_{k+1})} \label{eq:note:main:6}\\
    &\eqrefstackrel{eq:L_lipschitz}{\leq} \,\,\, - \tau\left( f(x_{k+1}) - f_* + \frac{\mu}{2} \norm{x_{k+1} - x_*}^2_{G^{-1}} \right) - \frac{\mu}{2} \norm{x_{k+1} - y_k}^2_{G^{-1}} \notag\\
        &\qquad- \tau\mu\ip{ x_{k+1} - z_k }{x_* - z_k}_{G^{-1}} \notag\\
        &\qquad+\frac{\mu}{2} \norm{\tau (x_{k+1}-z_k)}^2_{G^{-1}} + \tau(f(x_{k+1}) - f(y_k)) + \frac{\tau L}{2} \norm{y_k - x_{k+1}}^2_{G^{-1}} \label{eq:note:main:7}\\
    &\eqrefstackrel{eq:two_point_eq}{=} \,\,\, -\tau\left( f(x_{k+1}) - f_* + \frac{\mu}{2} \norm{x_{k+1} - z_k}^2_{G^{-1}} + \frac{\mu}{2}\norm{z_k - x_*}^2_{G^{-1}} + \mu\ip{x_{k+1} - z_k}{z_k - x_*}_{G^{-1}} \right) \notag\\
        &\qquad- \frac{\mu}{2} \norm{x_{k+1} - y_k}^2_{G^{-1}} - \tau\mu\ip{ x_{k+1} - z_k }{x_* - z_k}_{G^{-1}} \notag\\
        &\qquad+\frac{\mu}{2} \norm{\tau (x_{k+1}-z_k)}^2_{G^{-1}} + \tau(f(x_{k+1}) - f(y_k)) + \frac{\tau L}{2} \norm{y_k - x_{k+1}}^2_{G^{-1}} \label{eq:note:main:8}\\
    &\eqrefstackrel{eq:lyapunov_function}{=} \,\,\, - \tau V_k - \frac{\mu}{2} \norm{x_{k+1} - y_k}^2_{G^{-1}} - \frac{\tau \mu}{2} \norm{x_{k+1} - z_k}^2_{G^{-1}} \notag\\
        &\qquad+ \frac{\mu}{2} \norm{\tau (x_{k+1}-z_k)}^2_{G^{-1}} + \frac{\tau L}{2} \norm{y_k - x_{k+1}}^2_{G^{-1}} \notag\\
    &\eqrefstackrel{eq:acc:coupling}{=} \,\,\, - \tau V_k + \left( \frac{\tau L}{2} - \frac{ \mu}{2\tau} \right) \norm{y_k - x_{k+1}}^2_{G^{-1}} \label{eq:note:main:9} \\
    &\eqrefstackrel{eq:tau_requirements}{\leq} \,\,\, -\tau V_k \notag \:.
\end{align}
\end{subequations}
Above, \eqref{eq:note:main:5} follows from $\mu$-strong convexity,
\eqref{eq:note:main:6} and \eqref{eq:note:main:9} both use the definition of the
sequence \eqref{eq:new_update_rule},
\eqref{eq:note:main:7} follows from $L$-Lipschitz gradients,
\eqref{eq:note:main:8} uses the two-point inequality \eqref{eq:two_point_eq},
and the last inequality follows from the assumption of $\tau \leq \sqrt{\frac{\mu}{L}}$.
The claim \eqref{eq:lyapunov_recurrence} now follows by re-arrangement.
\end{proof}

\subsection{Proof of Theorem~\ref{thm:rand_acc_gs}}
\label{sec:appendix:proof_main_gs_result}

Next, we describe how to recover Theorem~\ref{thm:rand_acc_gs} from Theorem~\ref{thm:main_structural}.
We do this by applying Theorem~\ref{thm:main_structural} to the function
$f(x) = \frac{1}{2} x^\T A x - x^\T b$.

The first step in applying Theorem~\ref{thm:main_structural} is to construct
a probability measure on $\mathcal{S}^{n \times n} \times \R_+ \times \R_+$ for
which the randomness of the updates is drawn from.
We already have a distribution on $\mathcal{S}^{n \times n}$
from setting of Theorem~\ref{thm:rand_acc_gs} via the random matrix $H$. We trivially augment
this distribution by considering the random variable $(H, 1, 1) \in \Omega$.
By setting $\Gamma=\gamma=1$, the sequence \eqref{eq:acc:coupling},
\eqref{eq:acc:gradient_step}, \eqref{eq:acc:zseq} reduces to that
of Algorithm~\ref{alg:rand_acc_gs}.
Furthermore, the requirement on the $\nu$ parameter from \eqref{eq:nu_parameter}
simplifies to the requirement listed in \eqref{eq:gs_variance_term}.
This holds by the following equivalences which are valid
since conjugation by $G$ (which is assumed to be positive definite) preserves the semi-definite ordering,
\begin{align}
    \lambda_{\max}\left( \E\left[ (G^{-1/2} H G^{-1/2})^2 \right] \right) \leq \nu
    &\Longleftrightarrow  \E\left[ (G^{-1/2} H G^{-1/2})^2 \right] \preccurlyeq \nu I \notag\\
    &\Longleftrightarrow  \E\left[ G^{-1/2} H G^{-1} H G^{-1/2} \right] \preccurlyeq \nu I \notag\\
    &\Longleftrightarrow  \E\left[ H G^{-1} H \right] \preccurlyeq \nu G \label{eq:nu_param_equiv}\:.
\end{align}

It remains to check the gradient inequality \eqref{eq:gradient_step_progress}
and compute the strong convexity and Lipschitz parameters.  These computations
fall directly from the calculations made in Theorem 1 of \cite{qu15}, but we
replicate them here for completeness.

To check the gradient inequality \eqref{eq:gradient_step_progress},
because $f$ is a quadratic function, its second order Taylor
expansion is exact. Hence for almost every $\omega \in \Omega$,
\begin{align*}
    f(\Phi(x;\omega)) &= f(x) - \ip{\nabla f(x)}{H \nabla f(x)} + \frac{1}{2} \nabla f(x)^\T H A H \nabla f(x) \\
    &= f(x) - \ip{\nabla f(x)}{H \nabla f(x)} + \frac{1}{2} \nabla f(x)^\T S (S^\T A S)^{\dag} S^\T A S (S^\T A S)^{\dag} S^\T \nabla f(x) \\
    &=f(x) - \ip{\nabla f(x)}{H \nabla f(x)} + \frac{1}{2} \nabla f(x)^\T S (S^\T A S)^{\dag} S^\T \nabla f(x) \\
    &= f(x) - \frac{1}{2} \nabla f(x)^\T H \nabla f(x) \:.
\end{align*}
Hence the inequality \eqref{eq:gradient_step_progress} holds with equality.

We next compute the strong convexity and Lipschitz gradient parameters.
We first show that $f$ is $\lambda_{\min}(\E[ P_{A^{1/2} S}])$-strongly convex
with respect to the $\norm{\cdot}_{G^{-1}}$ norm.
This
follows since for any $x, y \in \R^{n}$, using the assumption that $G$ is positive
definite,
\begin{align*}
    f(y) &= f(x) + \ip{\nabla f(x)}{y-x} + \frac{1}{2} (y-x)^\T A (y-x) \\
         &= f(x) + \ip{\nabla f(x)}{y-x} + \frac{1}{2} (y-x)^\T G^{-1/2} G^{1/2} A G^{1/2} G^{-1/2} (y - x) \\
         &\geq f(x) + \ip{\nabla f(x)}{y-x} + \frac{\lambda_{\min}(A^{1/2} G A^{1/2})}{2} \norm{y-x}_{G^{-1}}^2 \:.
\end{align*}
The strong convexity bound now follows since
\begin{align*}
    A^{1/2} G A^{1/2} = A^{1/2} \E[H] A^{1/2} = \E[A^{1/2} S(S^\T A S)^{\dag} S^\T A^{1/2}] = \E[P_{A^{1/2} S}] \:.
\end{align*}
An nearly identical argument shows that $f$ is $\lambda_{\max}( \E[P_{A^{1/2} S}] )$-strongly convex
with respect to the $\norm{\cdot}_{G^{-1}}$ norm. Since the eigenvalues of projector matrices
are bounded by 1, we have that $f$ is 1-Lipschitz
with respect to the $\norm{\cdot}_{G^{-1}}$ norm.
This calculation shows that the requirement on $\tau$ from \eqref{eq:tau_requirements}
simplifies to $\tau \leq \sqrt{\frac{\mu}{\nu}}$, since $L=1$ and $\nu \geq 1$ by
Proposition~\ref{prop:nu_lower_bound} which we state and prove later.

At this point, Theorem~\ref{thm:main_structural} yields that
$\E[ V_k ] \leq (1-\tau)^k V_0$.
To recover the final claim \eqref{eq:rand_acc_gs_rate},
recall that $f(y_k) - f_* = \frac{1}{2} \norm{y_k - x_*}^2_{A}$.
Furthermore, $\mu G^{-1} \preccurlyeq A$, since
\begin{align*}
  \mu \leq \lambda_{\min}(A^{1/2} G A^{1/2}) &\Longleftrightarrow \mu \leq \lambda_{\min}( G^{1/2} A G^{1/2} ) \\
  &\Longleftrightarrow \mu I \preccurlyeq G^{1/2} A G^{1/2} \\
  &\Longleftrightarrow \mu G^{-1} \preccurlyeq A \:.
\end{align*}
Hence, we can upper bound $V_0$ as follows
\begin{align*}
  V_0 &= f(y_0) - f_* + \frac{\mu}{2} \norm{ z_0 - x_* }^2_{G^{-1}}
  = \frac{1}{2}\norm{y_0 - x_*}^2_{A} + \frac{\mu}{2} \norm{ z_0 - x_* }^2_{G^{-1}} \\
  &\leq \frac{1}{2}\norm{y_0 - x_*}^2_{A} + \frac{1}{2} \norm{ z_0 - x_* }^2_{A}
  = \norm{x_0 - x_*}^2_{A} \:.
\end{align*}
On the other hand, we have that $\frac{1}{2} \norm{y_k - x_*}^2_{A} \leq V_k$.
Putting the inequalities together,
\begin{align*}
  \frac{1}{\sqrt{2}} \E[ \norm{y_k - x_*}_{A} ] \leq \sqrt{\E[ \frac{1}{2} \norm{y_k - x_*}^2_{A} ]} \leq \sqrt{ \E[ V_k ] } \leq \sqrt{(1-\tau)^k V_0} \leq (1-\tau)^{k/2} \norm{x_0 - x_*}^2_{A} \:,
\end{align*}
where the first inequality holds by Jensen's inequality.
The claimed inequality \eqref{eq:rand_acc_gs_rate} now follows.

\subsection{Proof of Proposition~\ref{prop:lower_bound_acc_gs}}

We first state and prove an elementary linear algebra fact
which we will use below in our calculations.
\begin{proposition}
\label{prop:diag_eigenvalues}
Let $A,B,C,D$ be $n \times n$ diagonal matrices, and define
$M = \begin{bmatrix} A & B \\ C & D \end{bmatrix}$.
The eigenvalues of $M$ are given by the union of the eigenvalues
of the $2 \times 2$ matrices
  \begin{align*}
    \begin{bmatrix} A_i & B_i \\ C_i & D_i \end{bmatrix} \:, \:\: i=1, ..., n \:,
  \end{align*}
where $A_i,B_i,C_i,D_i$ denote the $i$-th diagonal entry of $A,B,C,D$ respectively.
\end{proposition}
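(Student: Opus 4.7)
The plan is to block-diagonalize $M$ via a similarity transform induced by a permutation matrix, reducing $M$ to a direct sum of $n$ matrices of size $2 \times 2$, each of which has exactly the form described in the statement.

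First I would observe the sparsity pattern of $M$ forced by the hypothesis that $A, B, C, D$ are all diagonal. The $(r, c)$ entry of $M$ can be nonzero only when (i) $r = c \leq n$, contributing $A_r$; (ii) $r \leq n$ and $c = n+r$, contributing $B_r$; (iii) $r > n$ and $c = r-n$, contributing $C_{r-n}$; or (iv) $r = c > n$, contributing $D_{r-n}$. The crucial consequence is that for each $i \in \{1, \ldots, n\}$, rows $i$ and $n+i$ of $M$ have nonzero entries only in columns $i$ and $n+i$.

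Next I would define the permutation $\pi$ on $\{1, \ldots, 2n\}$ by $\pi(2i-1) = i$ and $\pi(2i) = n+i$ for $i = 1, \ldots, n$, and let $P$ be the corresponding permutation matrix. By the observation above, applying the similarity transform $P^\T M P$ places the index pair $\{i, n+i\}$ into adjacent positions $\{2i-1, 2i\}$, and the restriction of $M$ to these rows and columns is precisely
\begin{align*}
    M_i = \begin{bmatrix} A_i & B_i \\ C_i & D_i \end{bmatrix}.
\end{align*}
Thus $P^\T M P = \mathrm{blkdiag}(M_1, \ldots, M_n)$.

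Finally I would invoke two standard facts: similarity transformations preserve the spectrum, so $M$ and $P^\T M P$ have the same eigenvalues; and the spectrum of a block diagonal matrix is the union (with multiplicity) of the spectra of its diagonal blocks. Combining these yields the claim. No step presents a real obstacle; the argument is entirely bookkeeping once the correct permutation is identified.
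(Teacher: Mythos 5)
Your proof is correct, but it takes a genuinely different route from the paper. The paper argues via the characteristic polynomial: since $-C$ and $sI-D$ are diagonal and hence commute, the block-determinant identity $\det\begin{bmatrix} sI-A & -B \\ -C & sI-D\end{bmatrix} = \det\left((sI-A)(sI-D) - BC\right)$ applies, and the right-hand side factors entrywise into the product $\prod_{i=1}^n \left((s-A_i)(s-D_i) - B_iC_i\right)$, which is exactly the product of the $2\times 2$ characteristic polynomials. You instead exhibit an explicit permutation similarity (the perfect-shuffle permutation pairing indices $i$ and $n+i$) under which $M$ becomes $\mathrm{blkdiag}(M_1,\dots,M_n)$, and then invoke invariance of the spectrum under similarity together with the spectrum of a block-diagonal matrix being the union of the block spectra. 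Both arguments are complete; yours is slightly longer to set up but proves the stronger structural fact that $M$ is permutation-similar to the direct sum of the $M_i$, which also transfers eigenvectors, multiplicities, and Jordan structure, whereas the paper's computation is a two-line determinant manipulation that yields only the eigenvalue statement (which is all that is needed downstream in the proof of Proposition~\ref{prop:lower_bound_acc_gs}).
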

\begin{proof}
For every $s \in \C$ we have that the matrices $-C$ and $sI - D$ are diagonal and hence commute.
Applying the corresponding formula for a block matrix determinant under this assumption,
    \begin{align*}
        0 &= \det\begin{bmatrix} sI - A & - B \\ -C & sI - D \end{bmatrix} = \det( (sI - A )(sI - D) - BC ) \\
          &= \prod_{i=1}^{n} ((s - A_i)(s - D_i) - B_i C_i) = \prod_{i=1}^{n} \det\begin{bmatrix} s - A_i & - B_i \\ - C_i & s - D_i \end{bmatrix} \:.
    \end{align*}
\end{proof}

Now we proceed with the proof of Proposition~\ref{prop:lower_bound_acc_gs}.
Define $e_k = \begin{bmatrix} y_k - x_* \\ z_k - x_* \end{bmatrix}$.
It is easy to see from the definition of Algorithm~\ref{alg:rand_acc_gs} that
$\{e_k\}$ satisfies the recurrence
\begin{align*}
    e_{k+1} = \frac{1}{1+\tau} \begin{bmatrix}
            I - H_k A & \tau (I - H_k A) \\
            \tau(I - \frac{1}{\mu} H_k A) & I - \frac{\tau^2}{\mu} H_k A
    \end{bmatrix} e_k \:.
\end{align*}
Hence,
\begin{align*}
    &\begin{bmatrix}
        A^{1/2} & 0 \\
        0 & \mu^{1/2} G^{-1/2}
    \end{bmatrix} e_{k+1} \\
    &\qquad=\frac{1}{1+\tau}\begin{bmatrix}
        A^{1/2} & 0 \\
        0 & \mu^{1/2} G^{-1/2}
    \end{bmatrix}
    \begin{bmatrix}
            I - H_k A & \tau (I - H_k A) \\
            \tau(I - \frac{1}{\mu} H_k A) & I - \frac{\tau^2}{\mu} H_k A
    \end{bmatrix} e_k \\
    &\qquad=\frac{1}{1+\tau}
    \begin{bmatrix}
        A^{1/2} - A^{1/2} H_k A & \tau (A^{1/2} - A^{1/2} H_k A) \\
        \mu^{1/2} \tau G^{-1/2} (I - \frac{1}{\mu} H_k A) & \mu^{1/2} G^{-1/2} (I - \frac{\tau^2}{\mu} H_k A)
    \end{bmatrix} e_k \\
    &\qquad=\frac{1}{1+\tau}
    \begin{bmatrix}
        I - A^{1/2} H_k A^{1/2} & \mu^{-1/2} \tau (A^{1/2} - A^{1/2} H_k A) G^{1/2} \\
        \mu^{1/2} \tau G^{-1/2} (I - \frac{1}{\mu} H_k A) A^{-1/2} &  G^{-1/2} (I - \frac{\tau^2}{\mu} H_k A) G^{1/2}
    \end{bmatrix}\begin{bmatrix}
        A^{1/2} & 0 \\
        0 & \mu^{1/2} G^{-1/2}
    \end{bmatrix} e_k \:.
\end{align*}
Define $P = \begin{bmatrix}
        A & 0 \\
        0 & \mu G^{-1}
\end{bmatrix}$.
By taking and iterating expectations,
\begin{align*}
    \E[ P^{1/2} e_{k+1} ] = \frac{1}{1+\tau} \begin{bmatrix}
        I - A^{1/2} G A^{1/2} & \mu^{-1/2} \tau (A^{1/2} G^{1/2} - A^{1/2} G A G^{1/2} ) \\
        \mu^{1/2} \tau ( G^{-1/2} A^{-1/2}  - \frac{1}{\mu} G^{1/2} A^{1/2} ) & I - \frac{\tau^2}{\mu} G^{1/2} A G^{1/2}
    \end{bmatrix}  \E[ P^{1/2} e_k ] \:.
\end{align*}
Denote the matrix $Q = A^{1/2} G^{1/2}$. Unrolling the recurrence above yields that
\begin{align*}
    \E[ P^{1/2} e_{k} ] = R^k P^{1/2} e_0 \:, \:\: R = \frac{1}{1+\tau} \begin{bmatrix}
        I - QQ^\T & \mu^{-1/2} \tau (Q - QQ^\T Q) \\
        \mu^{1/2} \tau ( Q^{-1}  - \frac{1}{\mu} Q^\T ) & I - \frac{\tau^2}{\mu} Q^\T Q
    \end{bmatrix} \:.
\end{align*}
Write the SVD of $Q$ as $Q = U \Sigma V^\T$. Both $U$ and $V$ are $n \times n$ orthonormal matrices.
It is easy to see that $R^k$ is given by
\begin{align}
    R^k = \frac{1}{(1+\tau)^k}\begin{bmatrix} U & 0 \\ 0 & V \end{bmatrix}
    \begin{bmatrix}
        I -  \Sigma^2 & \mu^{-1/2} \tau (\Sigma - \Sigma^3) \\
        \mu^{1/2} \tau (\Sigma^{-1} - \frac{1}{\mu}\Sigma) & I - \frac{\tau^2}{\mu} \Sigma^2
        \end{bmatrix}^k \begin{bmatrix} U^\T & 0 \\ 0 & V^\T \end{bmatrix} \:. \label{eq:recurrence_matrix_rk}
\end{align}
Suppose we choose $P^{1/2} e_0$ to be a right singular vector of $R^k$ corresponding to the
maximum singular value $\sigma_{\max}(R^k)$.
Then we have that
    \begin{align*}
        \E[\twonorm{P^{1/2} e_k}] \geq \twonorm{\E[P^{1/2} e_k]} = \twonorm{R^k P^{1/2} e_0} = \sigma_{\max}(R^k) \twonorm{P^{1/2} e_0} \geq \rho(R^k) \twonorm{P^{1/2} e_0} \:,
    \end{align*}
where $\rho(\cdot)$ denotes the spectral radius.
The first inequality is Jensen's inequality, and the second inequality uses
the fact that the spectral radius is bounded above by any matrix norm.
The eigenvalues of $R^k$ are the $k$-th power of the eigenvalues of $R$
which, using the similarity transform \eqref{eq:recurrence_matrix_rk} along with Proposition~\ref{prop:diag_eigenvalues},
are given by the eigenvalues of the $2 \times 2$ matrices $R_i$ defined as
    \begin{align*}
        R_i = \frac{1}{1+\tau}\begin{bmatrix}
        1 -  \sigma_i^2 & \mu^{-1/2} \tau (\sigma_i - \sigma_i^3) \\
        \mu^{1/2} \tau (\sigma_i^{-1} - \frac{1}{\mu}\sigma_i) & 1 - \frac{\tau^2}{\mu} \sigma_i^2
        \end{bmatrix} \:, \:\: \sigma_i = \Sigma_{ii} \:, \:\: i = 1, ..., n \:.
    \end{align*}
On the other hand, since the entries in $\Sigma$ are given by the
eigenvalues of $A^{1/2} G^{1/2} G^{1/2} A^{1/2} = \E[ P_{A^{1/2} S} ]$,
there exists an $i$ such that $\sigma_i = \sqrt{\mu}$. This $R_i$ is upper triangular,
and hence its eigenvalues can be read off the diagonal. This shows that $\frac{1 - \tau^2}{1+\tau} = 1-\tau$ is an eigenvalue of $R$, and hence $(1-\tau)^k$ is an eigenvalue of $R^k$.
But this means that $(1-\tau)^k \leq \rho(R^k)$.
Hence, we have shown that
\begin{align*}
    \E[ \twonorm{P^{1/2} e_k} ] \geq (1-\tau)^k \twonorm{P^{1/2} e_0} \:.
\end{align*}
The desired claim now follows from
    \begin{align*}
        \twonorm{P^{1/2} e_k} &= \sqrt{\norm{y_k - x_*}_{A}^2 + \mu\norm{z_k - x_*}_{G^{-1}}^2} \\
        &\leq \sqrt{\norm{y_k - x_*}_{A}^2 + \norm{z_k - x_*}_{A}^2} \leq \norm{y_k - x_*}_{A} + \norm{z_k - x_*}_{A} \:,
    \end{align*}
where the first inequality holds since $\mu G^{-1} \preccurlyeq A$
and the second inequality holds since $\sqrt{a+b} \leq \sqrt{a}+\sqrt{b}$ for non-negative $a,b$.

\section{Recovering the ACDM Result from Nesterov and Stich~\cite{nesterov16}}
\label{sec:appendix:recovering_acdm}

We next show how to recover
Theorem 1 of Nesterov and Stich~\cite{nesterov16} using Theorem~\ref{thm:main_structural},
in the case of $\alpha=1$.
A nearly identical argument can also be used to recover the result of Allen-Zhu et al.~\cite{allenzhu16} under the
strongly convex setting in the case of $\beta=0$.
Our argument proceeds in two steps. First, we prove a convergence result for a
simplified accelerated coordinate descent method which we introduce in
Algorithm~\ref{alg:rand_acc_cd}.
Then, we describe how a minor tweak to ACDM shows the equivalence between ACDM
and Algorithm~\ref{alg:rand_acc_cd}.

Before we proceed, we first describe the setting of Theorem 1.
Let $f : \R^{n} \longrightarrow \R$ be a twice differentiable strongly convex function
with Lipschitz gradients.
Let $J_1, ..., J_m$ denote a partition of $\{1, ..., n\}$ into $m$ partitions.
Without loss of generality, we can assume that the partitions are in order,
i.e. $J_1 = \{1, ..., n_1\}$, $J_2 = \{n_1 + 1, ..., n_2\}$, and so on.
This is without loss of generality since we can always consider
the function $g(x) = f(\Pi x)$ for a suitable permutation matrix $\Pi$.
Let $B_1, ..., B_m$ be fixed positive definite matrices such that
$B_i \in \R^{\abs{J_i} \times \abs{J_i}}$.
Set $H_i = S_i B_i^{-1} S_i^\T$,
where $S_i \in \R^{n \times \abs{J_i}}$ is the column selector matrix associated to partition $J_i$,
and define $L_i = \sup_{x \in \R^{n}} \lambda_{\max}(B_i^{-1/2} S_i^\T \nabla^2 f(x) S_i B_i^{-1/2})$
for $i=1,...,m$. Furthermore, define $p_i = \frac{\sqrt{L_i}}{\sum_{j=1}^{m} \sqrt{L_j}}$.

\subsection{Proof of convergence of a simplified accelerated coordinate descent method}

Now consider the following accelerated randomized coordinate descent algorithm in Algorithm~\ref{alg:rand_acc_cd}.
\begin{algorithm}[h!]
\caption{Accelerated randomized coordinate descent.}
\label{alg:rand_acc_cd}
\begin{algorithmic}[1]
  \REQUIRE{$\mu > 0$, partition $\{J_i\}_{i=1}^{m}$, positive definite $\{B_i\}_{i=1}^{m}$, Lipschitz constants $\{L_i\}_{i=1}^{m}$, $x_0 \in \R^{n}$.}
  \STATE Set $\tau = \frac{\sqrt{\mu}}{\sum_{i=1}^{m} \sqrt{L_i}}$.
  \STATE Set $H_i = S_i B_i^{-1} S_i^\T$ for $i=1,...,m$. \texttt{// $S_i$ denotes the column selector for partition $J_i$.}
  \STATE Set $p_i = \frac{\sqrt{L_i}}{\sum_{j=1}^{m} \sqrt{L_j}}$ for $i=1, ..., m$.
  \STATE Set $y_0 = z_0 = x_0$.
\FOR{$k=0, ..., T-1$}
    \STATE $i_k \gets$ random sample from $\{1, ..., m\}$ with $\Pr(i_k = i) = p_i$.
    \STATE $x_{k+1} = \frac{1}{1+\tau} y_k + \frac{\tau}{1+\tau} z_k$.
    \STATE $y_{k+1} = x_{k+1} - \frac{1}{L_{i_k}} H_{i_k} \nabla f(x_{k+1})$.
    \STATE $z_{k+1} = z_k + \tau(x_{k+1} - z_k) - \frac{\tau}{\mu p_{i_k}} H_{i_k} \nabla f(x_{k+1})$.
\ENDFOR
\STATE Return $y_T$.
\end{algorithmic}
\end{algorithm}

Theorem~\ref{thm:main_structural} is readily applied to Algorithm~\ref{alg:rand_acc_cd} to give a
convergence guarantee which matches the bound of Theorem 1 of Nesterov and Stich.
We sketch the argument below.

Algorithm~\ref{alg:rand_acc_cd} instantiates \eqref{eq:new_update_rule} with the definitions
above and particular choices $\Gamma_k = L_{i_k}$ and $\gamma_k = p_{i_k}$.
We will specify the choice of $\mu$ at a later point.
To see that this setting is valid,
we construct a discrete probability measure on $\mathcal{S}^{n \times n} \times \R_+ \times \R_+$
by setting $\omega_i = (H_i, L_i, p_i)$ and $\Pr(\omega = \omega_i) = p_i$ for $i=1, ..., m$.
Hence, in the context of Theorem~\ref{thm:main_structural},
$G = \E[ \frac{1}{\gamma} H] = \sum_{i=1}^{m} H_i = \mathrm{blkdiag}(B_1^{-1}, B_2^{-1}, ..., B_m^{-1})$.
We first verify the gradient inequality \eqref{eq:gradient_step_progress}.
For every fixed $x \in \R^{n}$, for every $i=1,...,m$ there exists a $c_i \in \R^{n}$ such that
\begin{align*}
    f(\Phi(x;\omega_i)) &= f(x) - \frac{1}{L_i}\ip{\nabla f(x)}{H_i \nabla f(x)} + \frac{1}{2L_i^2} \nabla f(x)^\T H_i \nabla^2 f(c_i) H_i \nabla f(x) \\
    &= f(x) - \frac{1}{L_i}\ip{\nabla f(x)}{H_i \nabla f(x)} \\
    &\qquad+ \frac{1}{2L_i^2} \nabla f(x)^\T S_i B_i^{-1/2} B_i^{-1/2} S_i^\T \nabla^2 f(c_i) S_i B_i^{-1/2} B_i^{-1/2} S_i^\T \nabla f(x) \\
    &\leq f(x) - \frac{1}{L_i}\ip{\nabla f(x)}{H_i \nabla f(x)} + \frac{1}{2L_i} \nabla f(x)^\T S_i B_i^{-1} S_i^\T \nabla f(x) \\
    &= f(x) - \frac{1}{2L_i} \norm{\nabla f(x)}^2_{H_i} \:.
\end{align*}
We next compute the $\nu$ constant defined in \eqref{eq:nu_parameter}.
We do this by checking the sufficient condition that $H_i G^{-1} H_i \preccurlyeq \nu H_i$ for $i=1,...,m$.
Doing so yields that $\nu=1$, since
\begin{align*}
    H_i G^{-1} H_i = S_i B_i^{-1} S_i^\T \mathrm{blkdiag}(B_1, B_2, ..., B_m) S_i B_i^{-1} S_i^\T = S_i B_i^{-1} B_i B_i^{-1} S_i^\T = S_i B_i^{-1} S_i^\T = H_i \:.
\end{align*}
To complete the argument, we set $\mu$ as the strong convexity constant
and $L$ as the Lipschitz gradient constant of $f$
with respect to the $\norm{\cdot}_{G^{-1}}$ norm.
It is straightforward to check that
\begin{align*}
    \mu = \inf_{x \in \R^{n}} \lambda_{\max}( G^{1/2} \nabla^2 f(x) G^{1/2} ) \:, \:\:
    L = \sup_{x \in \R^{n}} \lambda_{\max}( G^{1/2} \nabla^2 f(x) G^{1/2} )  \:.
\end{align*}
We now argue that $\sqrt{L} \leq \sum_{i=1}^{m} \sqrt{L_i}$.
Let $x \in \R^{n}$ achieve the supremum in the definition of $L$
(if no such $x$ exists, then let $x$ be arbitrarily close and take limits). Then,
\begin{align*}
    L &= \lambda_{\max}(G^{1/2} \nabla^2 f(x) G^{1/2}) = \lambda_{\max}( (\nabla^2 f(x))^{1/2} G (\nabla^2 f(x))^{1/2} ) \\
      &= \lambda_{\max}\left( (\nabla^2 f(x))^{1/2} \left(\sum_{i=1}^{m} S_i B_i^{-1} S_i^\T\right) (\nabla^2 f(x))^{1/2} \right) \\
      &\stackrel{(a)}{\leq} \sum_{i=1}^{m} \lambda_{\max}( (\nabla^2 f(x))^{1/2} S_i B_i^{-1} S_i^\T (\nabla^2 f(x))^{1/2} ) \\
      &\stackrel{(b)}{=} \sum_{i=1}^{m} \lambda_{\max}( S_iS_i^\T \nabla^2 f(x) S_iS_i^\T S_i B_i^{-1} S_i^\T ) \\
      &= \sum_{i=1}^{m} \lambda_{\max}( (S_i B_i^{-1} S_i^\T)^{1/2}  S_iS_i^\T \nabla^2 f(x) S_iS_i^\T (S_i B_i^{-1} S_i^\T)^{1/2} ) \\
      &\stackrel{(c)}{=} \sum_{i=1}^{m} \lambda_{\max}( S_i B_i^{-1/2} S_i^\T  S_iS_i^\T \nabla^2 f(x) S_iS_i^\T S_i B_i^{-1/2} S_i^\T ) \\
      &\stackrel{(d)}{=} \sum_{i=1}^{m} \lambda_{\max}(B_i^{-1/2} S_i^\T \nabla^2 f(x) S_i B_i^{-1/2}) \leq \sum_{i=1}^{m} L_i \:.
\end{align*}
Above, (a) follows by the convexity of the maximum eigenvalue, (b) holds since
$S_i^\T S_i = I$, (c) uses the fact that for any matrix $Q$ satisfying $Q^\T Q
= I$ and $M$ positive semi-definite, we have $(Q M Q^\T)^{1/2} = Q M^{1/2} Q^\T$, and (d) follows since
$\lambda_{\max}(S_i M S_i^\T) = \lambda_{\max}(M)$ for any $p \times p$
symmetric matrix $M$.  Using the fact that $\sqrt{a+b} \leq \sqrt{a}+\sqrt{b}$
for any non-negative $a,b$, the inequality $\sqrt{L} \leq \sum_{i=1}^{m}
\sqrt{L_i}$ immediately follows.  To conclude the proof, it remains to
calculate the requirement on $\tau$ via \eqref{eq:tau_requirements}.  Since
$\frac{\gamma_i}{\sqrt{\Gamma_i}} = \frac{p_i}{\sqrt{L_i}} =
\frac{1}{\sum_{i=1}^{m} \sqrt{L_i}}$, we have that
$\frac{\gamma_i}{\sqrt{\Gamma_i}} \leq \frac{1}{\sqrt{L}}$, and hence the
requirement is that $\tau \leq \frac{\sqrt{\mu}}{\sum_{i=1}^{m} \sqrt{L_i}}$.

\subsection{Relating Algorithm~\ref{alg:rand_acc_cd} to ACDM}
\label{sec:proofs:relating_acdm}

For completeness, we replicate the description of the ACDM algorithm from
Nesterov and Stich in Algorithm~\ref{alg:rand_acc_cd_orig}.  We make one minor
tweak in the initialization of the $A_k, B_k$ sequence which greatly simplifies
the exposition of what follows.

\begin{algorithm}[h!]
\caption{ACDM from Nesterov and Stich~\cite{nesterov16}, $\alpha=1, \beta=1/2$ case.}
\label{alg:rand_acc_cd_orig}
\begin{algorithmic}[1]
\REQUIRE{$\mu > 0$, partition $\{J_i\}_{i=1}^{m}$, positive definite $\{B_i\}_{i=1}^{m}$, Lipschitz constants $\{L_i\}_{i=1}^{m}$, $x_0 \in \R^{n}$.}
\STATE Set $H_i = S_i B_i^{-1} S_i^\T$ for $i=1,...,m$. \texttt{// $S_i$ denotes the column selector for partition $J_i$.}
\STATE Set $p_i = \frac{\sqrt{L_i}}{\sum_{j=1}^{m} \sqrt{L_j}}$ for $i=1, ..., m$.
\STATE Set $A_0 = 1, B_0 = \mu$. \texttt{// Modified from $A_0 = 0, B_0 = 1$.}
\STATE Set $S_{1/2} = \sum_{i=1}^{m} \sqrt{L_i}$.
\STATE Set $y_0 = z_0 = x_0$.
\FOR{$k=0, ..., T-1$}
    \STATE $i_k \gets$ random sample from $\{1, ..., m\}$ with $\Pr(i_k = i) = p_i$.
    \STATE $a_{k+1} \gets$ positive solution to $a_{k+1}^2 S_{1/2}^2 = (A_k + a_{k+1})(B_k + \mu a_{k+1})$.
    \STATE $A_{k+1} = A_k + a_{k+1}, B_{k+1} = B_k + \mu a_{k+1}$.
    \STATE $\alpha_k = \frac{a_{k+1}}{A_{k+1}}, \beta_k = \mu \frac{a_{k+1}}{B_{k+1}}$.
    \STATE $y_k = \frac{(1-\alpha_k)x_k + \alpha_k(1-\beta_k)z_k}{1-\alpha_k \beta_k}$.
    \STATE $x_{k+1} = y_k - \frac{1}{L_{i_k}} H_{i_k} \nabla f(y_k)$.
    \STATE $z_{k+1} = (1-\beta_k)z_k + \beta_k y_k - \frac{a_{k+1}}{B_{k+1} p_{i_k}} H_{i_k} \nabla f(y_k)$.
\ENDFOR
\STATE Return $x_T$.
\end{algorithmic}
\end{algorithm}

We first write the sequence produced by Algorithm~\ref{alg:rand_acc_cd_orig}
as
\begin{subequations}
  \begin{align}
    y_k &= \frac{(1-\alpha_k)x_k + \alpha_k(1-\beta_k)z_k}{1-\alpha_k \beta_k} \:, \label{eq:nest_orig:coupling} \\
    x_{k+1} &= y_k - \frac{1}{L_{i_k}} H_{i_k} \nabla f(y_k) \:, \label{eq:nest_orig:gradient_step} \\
    z_{k+1} - z_k &= \beta_k\left( y_k - z_k - \frac{a_{k+1}}{B_{k+1} p_{i_k} \beta_k} H_{i_k} \nabla f(y_k) \right) \:. \label{eq:nest_orig:zseq}
  \end{align}
\end{subequations}
Since $\beta_k B_{k+1} = \mu a_{k+1}$, the $z_{k+1}$ update simplifies to
\begin{align*}
  z_{k+1} - z_k &= \beta_k\left( y_k - z_k - \frac{1}{ \mu p_{i_k}} H_{i_k} \nabla f(y_k) \right) \:.
\end{align*}
A simple calculation shows that
\begin{align*}
  (1-\alpha_k \beta_k) y_k = (1-\alpha_k) x_k + \alpha_k (1-\beta_k) z_k \:,
\end{align*}
from which we conclude that
\begin{align}
  \frac{\alpha_k (1-\beta_k)}{1-\alpha_k} (y_k - z_k) = x_k - y_k \:. \label{eq:reduction_one}
\end{align}
Observe that
\begin{align*}
  A_{k+1} = \sum_{i=1}^{k+1} a_i + A_0 \:, \:\: B_{k+1} = \mu \sum_{i=1}^{k+1} a_i + B_0 \:.
\end{align*}
Hence as long as $\mu A_0 = B_0$ (which is satisfied by our modification),
we have that $\mu A_{k+1} = B_{k+1}$ for all $k \geq 0$.
With this identity, we have that $\alpha_k = \beta_k$ for all $k \geq 0$.
Therefore, \eqref{eq:reduction_one} simplifies to
\begin{align*}
  \beta_k (y_k - z_k) = x_k - y_k \:.
\end{align*}
We now calculate the value of $\beta_k$. At every iteration, we have that
\begin{align*}
  a_{k+1}^2 S^2_{1/2} = A_{k+1} B_{k+1} = \frac{1}{\mu} B_{k+1}^2 \Longrightarrow \frac{a_{k+1}}{B_{k+1}} = \frac{1}{\sqrt{\mu} S_{1/2}} \:.
\end{align*}
By the definition of $\beta_k$,
\begin{align*}
  \beta_k = \mu \frac{a_{k+1}}{B_{k+1}} = \frac{\sqrt{\mu}}{S_{1/2}} = \frac{\sqrt{\mu}}{\sum_{i=1}^{m} \sqrt{L_i}} = \tau \:.
\end{align*}
Combining these identities, we have shown that
\eqref{eq:nest_orig:coupling}, \eqref{eq:nest_orig:gradient_step},
and \eqref{eq:nest_orig:zseq} simplifies to
\begin{subequations}
  \begin{align}
    y_k &= \frac{1}{1+\tau} x_k + \frac{\tau}{1+\tau} z_k \:, \\
    x_{k+1} &= y_k - \frac{1}{L_{i_k}} H_{i_k} \nabla f(y_k) \:, \\
    z_{k+1} - z_k &= \tau\left( y_k - z_k - \frac{1}{\mu p_{i_k}} H_{i_k} \nabla f(y_k) \right) \:.
  \end{align}
\end{subequations}
This sequence directly coincides with the sequence generated by Algorithm~\ref{alg:rand_acc_cd}
after a simple relabeling.

\subsection{Accelerated Gauss-Seidel for fixed partitions from ACDM}
\label{sec:proofs:recovering_alg1}

\begin{algorithm}[h!]
\caption{Accelerated randomized block Gauss-Seidel for fixed partitions \cite{nesterov16}.}
\label{alg:rand_acc_nesterov}
\begin{algorithmic}[1]
    \REQUIRE{$A \in \R^{n \times n}$, $A \succ 0$, $b \in \R^{n}$, $x_0 \in \R^{n}$, block size $p$, $\mu_{\mathrm{part}}$ defined in \eqref{eq:mu_part}.}
\STATE Set $A_0 = 0, B_0 = 1$.
\STATE Set $\sigma = \frac{n}{p} \mu_{\mathrm{part}}$.
\STATE Set $y_0 = z_0 = x_0$.
\FOR{$k=0, ..., T-1$}
    \STATE $i_k \gets$ uniform from $\{1, 2, ..., n/p\}$.
    \STATE $S_k \gets$ column selector associated with partition $J_{i_k}$.
    \STATE $a_{k+1} \gets$ positive solution to $a_{k+1}^2 (n/p)^2 = (A_k + a_{k+1})(B_k + \sigma a_{k+1})$.
    \STATE $A_{k+1} = A_k + a_{k+1}, B_{k+1} = B_k + \sigma a_{k+1}$.
    \STATE $\alpha_k = \frac{a_{k+1}}{A_{k+1}}, \beta_k = \sigma \frac{a_{k+1}}{B_{k+1}}$.
    \STATE $y_k = \frac{(1-\alpha_k)x_k + \alpha_k(1-\beta_k)z_k}{1-\alpha_k \beta_k}$.
    \STATE $x_{k+1} = y_k - S_k(S_k^\T A S_k)^{-1} S_k^\T(A y_k - b)$.
    \STATE $z_{k+1} = (1-\beta_k)z_k + \beta_k y_k - \frac{n a_{k+1}}{p B_{k+1}} S_k(S_k^\T A S_k)^{-1} S_k^\T(A y_k - b)$.
\ENDFOR
\STATE Return $x_T$.
\end{algorithmic}
\end{algorithm}

We now describe Algorithm~\ref{alg:rand_acc_nesterov}, which is the specialization of ACDM (Algorithm~\ref{alg:rand_acc_cd_orig})
to accelerated Gauss-Seidel in the fixed partition setting.

As mentioned previously, we set the function $f(x) = \frac{1}{2} x^\T A x - x^\T b$.
Given a partition $\{J_i\}_{i=1}^{n/p}$, we let $B_i = S_i^\T A S_i$,
where $S_i \in \R^{n \times p}$ is the column selector matrix associated to the partition $J_i$.
With this setting, we have that $L_1 = L_2 = ... = L_{n/p} = 1$, and hence
we have $p_i = p/n$ for all $i$ (i.e. the sampling distribution is uniform over all partitions).
We now need to compute the strong convexity constant $\mu$.
With the simplifying assumption that the partitions are ordered,
$\mu$ is simply the strong convexity constant with respect to the
norm induced by the matrix $\mathrm{blkdiag}(B_1, B_2, ..., B_{n/p})$.
Hence, using the definition of $\mu_{\mathrm{part}}$ from \eqref{eq:mu_part}, we have that
$\mu = \frac{n}{p} \mu_{\mathrm{part}}$.
Algorithm~\ref{alg:rand_acc_nesterov} now follows from plugging our
particular choices of $f$ and the constants into Algorithm~\ref{alg:rand_acc_cd_orig}.

\section{A Result for Randomized Block Kaczmarz}
\label{sec:appendix:kaczmarz}

We now use Theorem~\ref{thm:main_structural} to derive a result similar to
Theorem~\ref{thm:rand_acc_gs} for the randomized accelerated Kaczmarz
algorithm. In this setting, we let $A \in \R^{m \times n}$, $m \geq n$ be a
matrix with full column rank, and $b \in \R^{m}$ such that $b \in
\mathcal{R}(A)$.  That is, there exists a unique $x_* \in \R^{n}$ such that $A
x_* = b$.
We note that this section generalizes the result of \cite{liu16} to the block case
(although the proof strategy is quite different).

We first describe the randomized accelerated block Kaczmarz algorithm in
Algorithm~\ref{alg:rand_acc_rk}.
Our main convergence result concerning Algorithm~\ref{alg:rand_acc_rk} is
presented in Theorem~\ref{thm:rand_acc_rk}.

\begin{algorithm}[h!]
\caption{Accelerated randomized block Kaczmarz.}
\label{alg:rand_acc_rk}
\begin{algorithmic}[1]
\REQUIRE{$A \in \R^{m \times n}$, $A$ full column rank, $b \in \mathcal{R}(A)$, sketching matrices $\{S_k\}_{k=0}^{T-1} \subseteq \R^{m \times p}$, $x_0 \in \R^{n}$, $\mu \in (0,1)$, $\nu \geq 1$.}
  \STATE Set $\tau = \sqrt{\mu/\nu}$.
  \STATE Set $y_0 = z_0 = x_0$.
\FOR{$k=0, ..., T-1$}
    \STATE $x_{k+1} = \frac{1}{1+\tau} y_k + \frac{\tau}{1+\tau} z_k$.
    \STATE $y_{k+1} = x_{k+1} - (S_k^\T A)^{\dag} S_k^\T (A x_{k+1} - b)$. \label{alg:line:localupdate}
    \STATE $z_{k+1} = z_k + \tau(x_{k+1} - z_k) - \frac{\tau}{\mu} (S_k^\T A)^{\dag} S_k^\T (A x_{k+1} - b)$.
\ENDFOR
\STATE Return $y_T$.
\end{algorithmic}
\end{algorithm}

\begin{theorem}
\label{thm:rand_acc_rk}
(Theorem~\ref{thm:rand_acc_rk_simple} restated.)
Let $A$ be an $m \times n$ matrix with full column rank, and $b \in \mathcal{R}(A)$.
Let $x_* \in \R^{n}$ denote the unique vector satisfying $A x_* = b$.
Suppose each $S_k$, $k =0, 1, 2, ...$ is an independent copy of a random sketching matrix $S \in \R^{m \times p}$.
Let $\mu = \lambda_{\min}( \E[P_{A^\T S}] )$. Suppose the distribution of $S$
satisfies $\mu > 0$.
Invoke Algorithm~\ref{alg:rand_acc_rk} with $\mu$ and $\nu$, where $\nu$ is defined as
\begin{align}
    \nu = \lambda_{\max}\left( \E\left[ (G^{-1/2} H G^{-1/2})^2 \right] \right) \:, \:\:  G = \E[H] \:, \:\: H = P_{A^\T S} \:. \label{eq:rk_variance_term}
\end{align}
Then for all $k \geq 0$ we have
\begin{align}
    \E[\twonorm{ y_k - x_* }] \leq \sqrt{2} \left(1 - \sqrt{\frac{\mu}{\nu}}\right)^{k/2} \twonorm{ x_0 - x_* } \:. \label{eq:rand_acc_rk_rate}
\end{align}
\end{theorem}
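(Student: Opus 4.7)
The plan is to mirror the proof strategy used for Theorem~\ref{thm:rand_acc_gs}, applying the general structural result of Theorem~\ref{thm:main_structural} to the particular choice $f(x) = \frac{1}{2}\twonorm{x-x_*}^2$ and $H = P_{A^\T S}$ suggested in Section~3.2.2. First, I would construct a probability measure on $\Omega$ by pushing $S$ forward via $(H,1,1)$, so that $\Gamma = \gamma = 1$. With this choice, the generic recurrence \eqref{eq:new_update_rule} collapses to Algorithm~\ref{alg:rand_acc_rk} once one observes the identity $H\nabla f(x) = P_{A^\T S}(x-x_*) = A^\T S(S^\T A A^\T S)^\dag S^\T A(x-x_*) = (S^\T A)^\dag S^\T (Ax-b)$, which is precisely the update direction appearing in the algorithm. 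This also justifies why the algorithm is implementable despite $f$ nominally depending on the unknown $x_*$.

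Next I would verify the three hypotheses of Theorem~\ref{thm:main_structural}. For the gradient inequality \eqref{eq:gradient_step_progress}, since $H$ is an orthogonal projector, so is $I-H$, and the Pythagorean identity applied to $u = x-x_*$ gives $\twonorm{(I-H)u}^2 = \twonorm{u}^2 - \twonorm{Hu}^2 = \twonorm{u}^2 - u^\T H u$, which rewrites as $f(\Phi(x;H)) = f(x) - \frac{1}{2}\norm{\nabla f(x)}_H^2$ (with equality, since $H^2 = H$). For strong convexity and smoothness of $f$ with respect to $\norm{\cdot}_{G^{-1}}$, note that $\nabla^2 f = I$, so the constants are simply $\mu = \lambda_{\min}(G) = \lambda_{\min}(\E[P_{A^\T S}])$ and $L = \lambda_{\max}(G) \leq 1$ (the latter because projector eigenvalues lie in $[0,1]$). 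The $\nu$ condition \eqref{eq:nu_parameter} with $\gamma=1$ reduces to $\E[HG^{-1}H] \preccurlyeq \nu G$, which by conjugation with $G^{1/2}$ (as in \eqref{eq:nu_param_equiv}) is equivalent to the definition of $\nu$ in \eqref{eq:rk_variance_term}.

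With these verifications in place, the step-size constraint \eqref{eq:tau_requirements} becomes $\tau \leq \sqrt{\mu/\nu}$, where the companion requirement $\tau \leq \sqrt{\mu/L} = \sqrt{\mu}$ is redundant because $\nu \geq 1$ (which follows from $G^{-1/2}HG^{-1/2}$ being a symmetric matrix whose trace equals $\Tr(G^{-1}H)$, combined with the fact that $\E[G^{-1/2}HG^{-1/2}] = I$). Theorem~\ref{thm:main_structural} then yields $\E[V_k] \leq (1-\tau)^k V_0$ for the Lyapunov function $V_k = f(y_k) + \frac{\mu}{2}\norm{z_k - x_*}_{G^{-1}}^2$.

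The final step converts this Lyapunov bound into the claimed bound on $\twonorm{y_k - x_*}$. Since $\mu G^{-1} \preccurlyeq I$ (equivalent to $\mu \leq \lambda_{\min}(G)$) and $y_0 = z_0 = x_0$, I would bound $V_0 \leq \frac{1}{2}\twonorm{y_0-x_*}^2 + \frac{1}{2}\twonorm{z_0-x_*}^2 = \twonorm{x_0-x_*}^2$, and on the other side $\frac{1}{2}\twonorm{y_k - x_*}^2 \leq V_k$. Combining via Jensen's inequality gives $\frac{1}{\sqrt{2}} \E[\twonorm{y_k - x_*}] \leq \sqrt{\E[V_k]} \leq (1-\tau)^{k/2} \twonorm{x_0-x_*}$, which is exactly \eqref{eq:rand_acc_rk_rate}. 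The main subtlety — really the only nontrivial part — is the initial identification $H\nabla f(x) = (S^\T A)^\dag S^\T (Ax-b)$, which is what lets us apply a framework designed around a known $f$ to a setting where we have access only to residuals.
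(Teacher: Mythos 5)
Your proposal is correct and follows essentially the same route as the paper's own proof: apply Theorem~\ref{thm:main_structural} to $f(x)=\frac{1}{2}\twonorm{x-x_*}^2$ with the trivial augmentation $(H,1,1)$, use the identity $H\nabla f(x)=(S^\T A)^{\dag}S^\T(Ax-b)$ to match Algorithm~\ref{alg:rand_acc_rk}, verify \eqref{eq:gradient_step_progress} with equality via the projector structure, read off $\mu=\lambda_{\min}(\E[P_{A^\T S}])$ and $L\le 1$, and convert the Lyapunov decay to \eqref{eq:rand_acc_rk_rate} exactly as in the Gauss-Seidel case. The only loose end is your parenthetical justification of $\nu\ge 1$, which is stated a bit vaguely but is easily repaired (e.g.\ $\E[(G^{-1/2}HG^{-1/2})^2]\succcurlyeq(\E[G^{-1/2}HG^{-1/2}])^2=I$ by Jensen, or a trace argument as in Proposition~\ref{prop:nu_lower_bound}).
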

\begin{proof}
The proof is very similar to that of Theorem~\ref{thm:rand_acc_gs}, so we only sketch the main argument.
The key idea is to use the correspondence between
randomized Kaczmarz and coordinate descent (see e.g. Section 5.2 of \cite{lee13}).
To do this, we apply Theorem~\ref{thm:main_structural} to $f(x) = \frac{1}{2} \twonorm{x - x_*}^2$.
As in the proof of Theorem~\ref{thm:rand_acc_gs}, we construct
a probability measure on $\mathcal{S}^{n \times n} \times \R_+ \times \R_+$
from the given random matrix $H$ by considering the random variable $(H, 1, 1)$.
To see that the sequence
\eqref{eq:acc:coupling}, \eqref{eq:acc:gradient_step}, and \eqref{eq:acc:zseq}
induces the same update sequence as Algorithm~\ref{alg:rand_acc_rk}, the crucial step
is to notice that
\begin{align*}
    H_k \nabla f(x_{k+1}) &= P_{A^\T S_k} \nabla f(x_{k+1}) = A^\T S_k (S_k^\T AA^\T S_k)^{\dag} S_k^\T A (x_{k+1} - x_*) \\
    &= A^\T S_k (S_k^\T AA^\T S_k)^{\dag} S_k^\T (A x_{k+1} - b) = (S_k^\T A)^{\dag} S_k^\T (A x_{k+1} - b) \:.
\end{align*}
Next, the fact that $f$ is $\lambda_{\min}(\E[P_{A^\T S}])$-strongly convex
and $1$-Lipschitz with respect to the $\norm{\cdot}_{G^{-1}}$ norm, where $G = \E[P_{A^\T S}]$,
follows immediately by a nearly identical argument used in the proof of
Theorem~\ref{thm:rand_acc_gs}.
It remains to check the gradient inequality \eqref{eq:gradient_step_progress}.
Let $x \in \R^{n}$ be fixed.
Then using the fact that $f$ is quadratic, for almost every $\omega \in \Omega$,
    \begin{align*}
        f(\Phi(x;\omega)) &= f(x) - \ip{\nabla f(x)}{H(x - x_*)} + \frac{1}{2} \norm{H(x - x_*)}^2_2  \\
        &= f(x) - \ip{x-x_*}{P_{A^\T S} (x - x_*)} + \frac{1}{2} \norm{ P_{A^\T S}( x - x_*) }^2_2 \\
        &= f(x) - \frac{1}{2}\ip{x-x_*}{P_{A^\T S} (x - x_*)} \:.
    \end{align*}
Hence the gradient inequality \eqref{eq:gradient_step_progress} holds with equality.
\end{proof}

\subsection{Computing $\nu$ and $\mu$ in the setting of \cite{liu16}}
\label{sec:proofs:compute_nu_mu_liu_wright}

We first state a proposition which will be useful in our analysis of $\nu$.
\begin{proposition}
\label{prop:projector_sum_inequality}
Let $M_1, ..., M_s \subseteq \R^n$ denote subspaces of $\R^n$
such that $M_1 + ... + M_s = \R^n$.
Then we have
  \begin{align*}
    \sum_{i=1}^{s} P_{M_i} \left(\sum_{i=1}^{s} P_{M_i}\right)^{-1} P_{M_i} \preccurlyeq \sum_{i=1}^{s} P_{M_i} \:.
  \end{align*}
\end{proposition}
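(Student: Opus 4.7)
The plan is to reduce the claim to the familiar scalar fact that on $[0,1]$ the function $t \mapsto t^2$ lies below $t \mapsto t$, after a suitable congruence transformation by $G^{1/2}$.

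First I would denote $G = \sum_{i=1}^{s} P_{M_i}$ and observe that $G$ is invertible. Indeed, $\ker(G) \subseteq \bigcap_i \ker(P_{M_i}) = \bigcap_i M_i^\perp = (M_1 + \cdots + M_s)^\perp = \{0\}$ by the spanning hypothesis, and each $P_{M_i}$ is PSD, so $G$ is positive definite. This lets me write $G^{-1/2}$.

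Next, I would define the matrices $Q_i = G^{-1/2} P_{M_i} G^{-1/2}$, which are symmetric PSD (congruence of a PSD matrix), and observe the key identity $\sum_{i=1}^{s} Q_i = G^{-1/2} G G^{-1/2} = I$. From this identity plus $Q_i \succcurlyeq 0$, each individual $Q_i$ satisfies $Q_i \preccurlyeq I$, because $I - Q_i = \sum_{j \neq i} Q_j \succcurlyeq 0$. Consequently the symmetric matrix $Q_i$ has eigenvalues in $[0,1]$, and since $\lambda^2 \leq \lambda$ on $[0,1]$, the spectral theorem gives $Q_i^2 \preccurlyeq Q_i$ for each $i$.

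Summing these relations yields $\sum_i Q_i^2 \preccurlyeq \sum_i Q_i = I$. The left-hand side expands as
\begin{equation*}
\sum_{i=1}^{s} Q_i^2 = G^{-1/2}\Bigl(\sum_{i=1}^{s} P_{M_i} G^{-1} P_{M_i}\Bigr) G^{-1/2},
\end{equation*}
so conjugating back by $G^{1/2}$ (which preserves $\preccurlyeq$) gives $\sum_{i} P_{M_i} G^{-1} P_{M_i} \preccurlyeq G$, as desired. There is no real obstacle here; the only point that could trip one up is verifying the spanning assumption is used (for invertibility of $G$) and remembering that the $Q_i$ are symmetric so the scalar inequality $\lambda^2 \leq \lambda$ lifts to the operator ordering.
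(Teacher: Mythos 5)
Your proof is correct, and it reaches the conclusion by a different mechanism than the paper. Both arguments ultimately rest on the same per-index inequality: your relation $Q_i^2 \preccurlyeq Q_i$ is, after conjugating back by $G^{1/2}$, exactly the statement $P_{M_i} G^{-1} P_{M_i} \preccurlyeq P_{M_i}$, which the paper proves for each $i$ and then sums. The difference is in how that inequality is obtained. The paper phrases it as positive semi-definiteness of the block matrix $\begin{bmatrix} P_{M_i} & P_{M_i} \\ P_{M_i} & \sum_j P_{M_j} \end{bmatrix}$ via a Schur complement, and then splits this block matrix into $\begin{bmatrix} 1 & 1 \\ 1 & 1 \end{bmatrix} \otimes P_{M_i}$ plus a PSD remainder containing $\sum_{j \neq i} P_{M_j}$, invoking the Kronecker eigenvalue fact. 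You instead conjugate by $G^{-1/2}$ so that the projectors become a resolution of identity $\sum_i Q_i = I$, note $0 \preccurlyeq Q_i \preccurlyeq I$ because $I - Q_i = \sum_{j \neq i} Q_j \succcurlyeq 0$ (the same structural observation as the paper's PSD remainder), and lift the scalar inequality $t^2 \leq t$ on $[0,1]$ through the spectral theorem. Your route is more elementary and self-contained, avoiding Schur complements and Kronecker products, and it has the added virtue of explicitly verifying that $G$ is invertible from the spanning hypothesis via $\ker(G) = \bigcap_i \ker(P_{M_i}) = (M_1 + \cdots + M_s)^{\perp} = \{0\}$, a point the paper leaves implicit when it writes $G^{-1}$. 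The paper's Schur-complement technique, on the other hand, is the same tool it reuses for its Lemma on $\E[P_M]$ (where only generalized Schur complements and pseudo-inverses are available), so its choice buys uniformity of method rather than brevity.
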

\begin{proof}
We will prove that for every $1 \leq i \leq s$,
\begin{align}
    P_{M_i} \left(\sum_{i=1}^{s} P_{M_i}\right)^{-1} P_{M_i} \preccurlyeq P_{M_i} \:, \label{eq:individual_ineq}
\end{align}
from which the claim immediately follows.
By Schur complements, \eqref{eq:individual_ineq} holds
iff
\begin{align}
    0 \preccurlyeq \begin{bmatrix} P_{M_i} & P_{M_i} \\ P_{M_i} & \sum_{i=1}^{s}  P_{M_i} \end{bmatrix} &= \begin{bmatrix}  P_{M_i} & P_{M_i} \\ P_{M_i} & P_{M_i} \end{bmatrix} + \begin{bmatrix} 0 & 0 \\ 0 & \sum_{j \neq i}^{s}  P_{M_j}  \end{bmatrix} \nonumber \\
    &= \begin{bmatrix} 1 & 1 \\ 1 & 1 \end{bmatrix} \otimes P_{M_i} + \begin{bmatrix} 0 & 0 \\ 0 & \sum_{j \neq i}^{s} P_{M_j} \end{bmatrix} \:.\nonumber
\end{align}
Since the eigenvalues of a Kronecker product are given by the Cartesian product of the individual eigenvalues,
\eqref{eq:individual_ineq} holds.
\end{proof}

Now we can estimate the $\nu$ and $\mu$ values.
Let $a_i \in \R^n$ denote each row of $A$, with $\norm{a_i}_2=1$ for all $i=1, ..., m$.
In this setting, $H = P_{a_i} = a_ia_i^\T$ with probability $1/m$.
Hence, $G = \E[H] = \sum_{i=1}^{m} \frac{1}{m} a_ia_i^\T = \frac{1}{m} A^\T A$.
Furthermore,
\begin{align*}
  \E[ H G^{-1} H ] &= \sum_{i=1}^{m} a_ia_i^\T  m (A^\T A)^{-1} a_ia_i^\T \frac{1}{m} \\
  &= \sum_{i=1}^{m} a_ia_i^\T (A^\T A)^{-1} a_ia_i^\T \\
  &\stackrel{(a)}{\preccurlyeq} \sum_{i=1}^{m} a_ia_i^\T = A^\T A = m G \:,
\end{align*}
where (a) follows from Proposition~\ref{prop:projector_sum_inequality}.
Hence, $\nu \neq m$.
On the other hand,
\begin{align*}
  \mu = \lambda_{\min}(\E[P_{A^\T S}]) = \lambda_{\min}(G) = \frac{1}{m} \lambda_{\min}(A^\T A) \:.
\end{align*}

\section{Proofs for Random Coordinate Sampling (Section~\ref{sec:results:rand_coords})}

Our primary goal in this section is to provide a proof of
Lemma~\ref{lemma:nu_upper_bound}. Along the way, we prove a few other
results which are of independent interest.
We first provide a proof of the lower bound claim in Lemma~\ref{lemma:nu_upper_bound}.
\begin{proposition}
\label{prop:nu_lower_bound}
Let $A$ be an $n \times n$ matrix and let
$S \in \R^{n \times p}$ be a random matrix.
Put $G = \E[ P_{A^{1/2} S} ]$ and suppose that $G$ is positive definite.
Let $\nu > 0$ be any positive number such that
\begin{align}
    \E[ P_{A^{1/2} S} G^{-1} P_{A^{1/2} S} ] \preccurlyeq \nu G \:, \:\: G = \E[ P_{A^{1/2} S} ] \:. \label{eq:nu_inequality_again}
\end{align}
Then $\nu \geq n/p$.
\end{proposition}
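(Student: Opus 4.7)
The plan is to prove the lower bound by taking the trace of both sides of the inequality \eqref{eq:nu_inequality_again} and exploiting the projection property $P_{A^{1/2}S}^2 = P_{A^{1/2}S}$. Trace is monotone on the positive semi-definite cone, so applying $\Tr$ to the matrix inequality immediately yields a scalar inequality between two numbers that can each be computed in closed form.

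For the left-hand side, I would use cyclicity of the trace together with idempotency of the projector to write
\begin{align*}
\Tr\bigl(\E[P_{A^{1/2}S} G^{-1} P_{A^{1/2}S}]\bigr) = \E\bigl[\Tr(G^{-1} P_{A^{1/2}S}^2)\bigr] = \Tr\bigl(G^{-1} \E[P_{A^{1/2}S}]\bigr) = \Tr(G^{-1} G) = n \:.
\end{align*}
For the right-hand side, the key observation is that $P_{A^{1/2}S}$ is the orthogonal projector onto $\mathcal{R}(A^{1/2}S)$, a subspace of dimension at most $p$ (since $A^{1/2}S$ has at most $p$ columns). Hence $\Tr(P_{A^{1/2}S}) \leq p$ almost surely, so $\Tr(G) = \E[\Tr(P_{A^{1/2}S})] \leq p$. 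Combining, $n \leq \nu \Tr(G) \leq \nu p$, which rearranges to $\nu \geq n/p$ as claimed.

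There is no real obstacle here; the only thing to be slightly careful about is that $G$ being positive definite is needed to make $G^{-1}$ well-defined and to ensure the trace manipulations on the left-hand side are legitimate, but this is given by hypothesis. The entire proof should be two or three lines once the trace trick is identified.
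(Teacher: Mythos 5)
Your proof is correct and follows essentially the same route as the paper: take the trace of the matrix inequality, use cyclicity and idempotency of the projector together with $\Tr(G G^{-1}) = n$ on the left, and bound $\Tr(P_{A^{1/2}S}) = \rank(A^{1/2}S) \leq p$ on the right to get $n \leq \nu p$. No gaps to report.
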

\begin{proof}
Since trace commutes with expectation and
respects the positive semi-definite ordering, taking trace of
both sides of \eqref{eq:nu_inequality_again} yields that
\begin{align*}
    n &= \Tr( G G^{-1} ) =
    \Tr( \E[ P_{A^{1/2} S} G^{-1}] ) =
    \E[\Tr(  P_{A^{1/2} S} G^{-1}  )] =
    \E[\Tr(  P_{A^{1/2} S} G^{-1} P_{A^{1/2} S}  )] \\
    &= \Tr(\E[ P_{A^{1/2} S} G^{-1} P_{A^{1/2} S} ] ) \,\,\,\eqrefstackrel{eq:nu_inequality_again}{\leq} \,\,\, \nu \Tr(\E[ P_{A^{1/2} S}]) \\
    &= \nu \E[ \Tr(P_{A^{1/2} S}) ] = \nu \E[ \rank(A^{1/2} S) ] \leq \nu p \:.
\end{align*}
\end{proof}
Next, the upper bound relies on the following lemma,
which generalizes Lemma~2 of \cite{qu16}.
\begin{lemma}
\label{lemma:semidef_jensen}
Let $M$ be a random matrix. We have that
\begin{align}
    \E[P_{M}] \succcurlyeq \E[M] (\E[M^\T M])^{\dag} \E[M^\T] \:.
\end{align}
\end{lemma}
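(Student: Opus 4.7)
\medskip

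\noindent\textbf{Proof proposal.} The plan is to upgrade this lemma from an operator Jensen-type inequality into a pointwise positive semi-definiteness fact about a block matrix, and then take expectations. Specifically, I will show that for every realization of $M$,
\begin{align*}
    \begin{bmatrix} P_M & M \\ M^\T & M^\T M \end{bmatrix} \succcurlyeq 0 \:.
\end{align*}
Since positive semi-definiteness is preserved under expectation (the cone $\{X : X \succcurlyeq 0\}$ is closed and convex), this would immediately yield
\begin{align*}
    \begin{bmatrix} \E[P_M] & \E[M] \\ \E[M^\T] & \E[M^\T M] \end{bmatrix} \succcurlyeq 0 \:,
\end{align*}
and then the lemma follows from the generalized Schur complement criterion applied to this $2 \times 2$ block matrix, with the Schur complement taken with respect to $\E[M^\T M]$.

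For the pointwise claim, I would use the (thin) SVD $M = U_r \Sigma_r V_r^\T$ with $r = \rank(M)$, so that $P_M = U_r U_r^\T$ and $M^\T M = V_r \Sigma_r^2 V_r^\T$. A direct calculation shows that the block matrix factors as
\begin{align*}
    \begin{bmatrix} P_M & M \\ M^\T & M^\T M \end{bmatrix} = \begin{bmatrix} U_r \\ V_r \Sigma_r \end{bmatrix} \begin{bmatrix} U_r^\T & \Sigma_r V_r^\T \end{bmatrix} \:,
\end{align*}
which is manifestly of the form $X X^\T$ and hence positive semi-definite. This factorization also makes clear why the lemma is really just Jensen's inequality for the ``matrix square'' in disguise.

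The only nontrivial bookkeeping is making the Schur complement step work with the Moore--Penrose pseudo-inverse when $\E[M^\T M]$ is rank-deficient. The generalized criterion requires $\mathcal{R}(\E[M^\T]) \subseteq \mathcal{R}(\E[M^\T M])$, equivalently $\ker(\E[M^\T M]) \subseteq \ker(\E[M])$. This I would verify as follows: if $\E[M^\T M] v = 0$, then $\E[\twonorm{Mv}^2] = v^\T \E[M^\T M] v = 0$, so $Mv = 0$ almost surely, and hence $\E[M] v = 0$. I expect this range condition to be the only subtlety in an otherwise short proof; everything else is a direct SVD computation followed by an application of Schur complements.
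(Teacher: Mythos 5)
Your proposal is correct and follows essentially the same route as the paper's proof: establish positive semi-definiteness of the block matrix $\begin{bmatrix} P_M & M \\ M^\T & M^\T M \end{bmatrix}$ pointwise, pass to expectations, verify $\mathcal{R}(\E[M^\T]) \subseteq \mathcal{R}(\E[M^\T M])$ via the same almost-sure kernel argument, and conclude by the generalized Schur complement. The only (immaterial) difference is that you certify the pointwise step with an explicit SVD Gram factorization, whereas the paper invokes the generalized Schur complement together with $P_M = M(M^\T M)^{\dag}M^\T$ and $\mathcal{R}(M^\T) = \mathcal{R}(M^\T M)$.
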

\begin{proof}
Our proof follows the strategy in the proof of Theorem 3.2 from \cite{zhang05}.
First, write $P_B = B(B^\T B)^{\dag}B^\T$. Since $\Range(B^\T) = \Range(B^\T B)$,
we have by generalized Schur complements (see e.g. Theorem 1.20 from \cite{zhang05})
and the fact that expectation preserves the semi-definite order,
    \begin{align*}
        \begin{bmatrix} B^\T B & B^\T \\ B & P_B \end{bmatrix} \succcurlyeq 0 \Longrightarrow \begin{bmatrix} \E[B^\T B] & \E[B^\T] \\ \E[B] & \E[P_B] \end{bmatrix} \succcurlyeq 0 \:.
    \end{align*}
To finish the proof, we need to argue that $\Range(\E[B^\T]) \subseteq \Range(\E[B^\T B])$,
which would allow us to apply the generalized Schur complement again to the right hand side.
Fix a $z \in \Range(\E[B^\T])$; we can write $z = \E[B^\T]y$ for some $y$.
Now let $q \in \Kern(\E[B^\T B])$. We have that $\E[B^\T B] q = 0$, which implies $0 = q^\T \E[B^\T B] q = \E[\twonorm{B q}^2]$.
Therefore, $B q = 0$ a.s.  But this means that $z^\T q = \E[y^\T B q] = 0$. Hence, $z \in \Kern(\E [B^\T B])^{\perp} = \Range(\E[B^\T B])$.
Now applying the generalized Schur complement one more time yields the claim.
\end{proof}
We are now in a position to prove the upper bound of Lemma~\ref{lemma:nu_upper_bound}.
We apply Lemma~\ref{lemma:semidef_jensen} to $M=A^{1/2} SS^\T A^{1/2}$ to conclude,
using the fact that $\mathcal{R}(M) = \mathcal{R}(MM^\T)$,
that
\begin{align}
    \E[P_{A^{1/2} S}] &= \E[P_{A^{1/2} SS^\T A^{1/2}}] \succcurlyeq \E[A^{1/2} SS^\T A^{1/2}] ( \E[A^{1/2} SS^\T A SS^\T A^{1/2}])^{\dag} \E[A^{1/2} SS^\T A^{1/2}] \label{eq:lower_bound_gs} \:.
\end{align}
Elementary calculations now yield that for any fixed symmetric matrix $A \in \R^{n \times n}$,
\begin{align}
    \E[SS^\T] = \frac{p}{n} I, \qquad \E[SS^\T A SS^\T] = \frac{p}{n}\left(\frac{p-1}{n-1} A + \left(1-\frac{p-1}{n-1}\right) \diag(A)\right) \:. \label{eq:simple_calcs}
\end{align}
Hence plugging \eqref{eq:simple_calcs} into \eqref{eq:lower_bound_gs},
\begin{align}
    \E[P_{A^{1/2} S}] &\succcurlyeq \frac{p}{n} \left( \frac{p-1}{n-1} I + \left(1-\frac{p-1}{n-1}\right) A^{-1/2} \diag(A) A^{-1/2} \right)^{-1} \:. \label{eq:semidef_lower_bound_proj}
\end{align}
We note that the lower bound \eqref{eq:mu_rand_lower_bound} for $\mu_{\mathrm{rand}}$ presented
in Section~\ref{sec:background} follows immediately from \eqref{eq:semidef_lower_bound_proj}.

We next manipulate \eqref{eq:gs_variance_term} in order to use \eqref{eq:semidef_lower_bound_proj}.
Recall that $G = \E[ H ]$ and $H = S(S^\T A S)^{\dag} S^\T$.
From \eqref{eq:nu_param_equiv}, we have
\begin{align*}
    \lambda_{\max}\left( \E\left[ (G^{-1/2} H G^{-1/2})^2 \right] \right) \leq \nu
    \Longleftrightarrow  \E\left[ H G^{-1} H \right] \preccurlyeq \nu G \:.
\end{align*}
Next, a simple computation yields
\begin{align*}
    \E[H G^{-1} H] = \E[S(S^\T A S)^{-1} S^\T G^{-1} S(S^\T A S)^{-1} S^\T] = A^{-1/2} \E[P_{A^{1/2} S} ( \E[P_{A^{1/2} S}] )^{-1} P_{A^{1/2} S}]  A^{-1/2} \:.
\end{align*}
Again, since conjugation by $A^{1/2}$ preserves semi-definite ordering,
we have that
\begin{align*}
    \E[ H G^{-1} H ] \preccurlyeq \nu G \Longleftrightarrow \E[P_{A^{1/2} S} ( \E[P_{A^{1/2} S}] )^{-1} P_{A^{1/2} S}] \preccurlyeq \nu \E [P_{A^{1/2} S}]  \:.
\end{align*}
Using the fact that for positive definite matrices $X, Y$ we have
$X \preccurlyeq Y$ iff $Y^{-1} \preccurlyeq X^{-1}$,
\eqref{eq:semidef_lower_bound_proj} is equivalent to
\begin{align*}
    (\E[P_{A^{1/2} S}])^{-1} \preceq \frac{n}{p} \left(\frac{p-1}{n-1} I + \left(1 - \frac{p-1}{n-1}\right) A^{-1/2} \diag(A) A^{-1/2} \right) \:.
\end{align*}
Conjugating both sides by $P_{A^{1/2} S}$ and taking expectations,
\begin{align}
    \E [P_{A^{1/2} S}(\E[P_{A^{1/2} S}])^{-1}P_{A^{1/2} S}] &\preceq \frac{n}{p}\left( \frac{p-1}{n-1} \E[P_{A^{1/2} S}] + \left(1 - \frac{p-1}{n-1}\right) \E[ P_{A^{1/2} S}  A^{-1/2} \diag(A) A^{-1/2} P_{A^{1/2} S}] \right) \:. \label{eq:intermediate_bound}
\end{align}
Next, letting $J \subseteq 2^{[n]}$ denote the index set associated to $S$, for every $S$ we have
\begin{align*}
    &P_{A^{1/2} S}  A^{-1/2}\diag(A) A^{-1/2} P_{A^{1/2} S} \\
    &\qquad= A^{1/2} S (S^\T A S)^{-1} S^\T A^{1/2} A^{-1/2} \diag(A) A^{-1/2} A^{1/2} S (S^\T A S)^{-1} S^\T A^{1/2} \\
    &\qquad= A^{1/2} S (S^\T A S)^{-1/2}  (S^\T A S)^{-1/2} (S^\T \diag(A) S) (S^\T A S)^{-1/2}  (S^\T A S)^{-1/2} S^\T A^{1/2} \\
    &\qquad\preccurlyeq \lambda_{\max}( (S^\T \diag(A) S) (S^\T A S)^{-1} ) A^{1/2} S (S^\T A S)^{-1} S^\T A^{1/2} \\
    &\qquad\preccurlyeq \frac{\max_{i \in J} A_{ii}}{\lambda_{\min}(A_{J})} P_{A^{1/2} S} \\
    &\qquad\preccurlyeq \max_{J \in 2^{[n]} : \abs{J}=p} \kappa_{\mathrm{eff},J}(A) P_{A^{1/2} S} \:.
\end{align*}
Plugging this calculation back into \eqref{eq:intermediate_bound} yields the desired upper bound of
Lemma~\ref{lemma:nu_upper_bound}.


\end{document}